\DeclareFontShape{T1}{lmr}{b}{sc}{<->ssub*cmr/bx/sc}{}
\DeclareFontShape{T1}{lmr}{bx}{sc}{<->ssub*cmr/bx/sc}{}
\newcommand{\btk}{\begin{tikzcd}}
\newcommand{\etk}{\end{tikzcd}}
\newcommand{\@bbify}[1]{
  \ifcsname b#1\endcsname
  \message{WARNING: Overwriting b#1 with blackboard letter!}
  \fi
  \expandafter\edef\csname b#1\endcsname
  {\noexpand\ensuremath{\noexpand\mathbb #1}\noexpand\xspace}}
\newcommand{\@calify}[1]{
  \ifcsname c#1\endcsname
  \message{WARNING: Overwriting c#1 with calligraphic letter!}
  \fi 
  \expandafter\edef\csname c#1\endcsname
  {\noexpand\ensuremath{\noexpand\mathcal #1}\noexpand\xspace}}
\newcommand{\@bfify}[1]{
  \ifcsname bf#1\endcsname
  \message{WARNING: Overwriting c#1 with bold letter!}
  \fi
  \expandafter\edef\csname bf#1\endcsname
  {\noexpand\ensuremath{\noexpand\mathbf #1}\noexpand\xspace}}
\newcounter{@letter}\stepcounter{@letter}
\loop\@bbify{\Alph{@letter}}\@calify{\Alph{@letter}}\@bfify{\Alph{@letter}}
\newenvironment{tz}{\begin{center}\begin{tikzpicture}}{\end{tikzpicture}\end{center}}
\tikzstyle{d}=[double distance=.3ex]
\tikzstyle{w}=[preaction={draw=white,-,line width=5pt}]
\tikzset{%
node distance=1.5cm, la/.style={scale=0.8}, lasmall/.style={scale=0.75}, over/.style={auto=false,fill=white,inner sep=1.5pt, minimum size=0, outer sep=0},
    symbol/.style={%
        draw=none,
        every to/.append style={%
            edge node={node [sloped, allow upside down, auto=false]{$#1$}}},
            
    }, pro/.style={postaction={decorate,decoration={
        markings,
        mark=at position .5 with {\node at (0,0) {$\bullet$};}
      }},
      inner sep=.9ex,
      },
      prosmall/.style={postaction={decorate,decoration={
        markings,
        mark=at position .5 with {\node at (0,0) {$\scriptstyle \bullet$};}
      }},
      inner sep=.9ex,
      },
  n/.style={double equal sign distance, -implies}, t/.style={double distance=2.5pt, -implies, postaction={draw,-}},
}
\newcommand{\arrowdot}{
\ensuremath{\begin{tikzpicture}
\node (A) at (0,-.4) {};
\node (B) at (.4,-.4) {};
\draw[->, line width=.1ex] (0,-.6) -- (.4,-.6);
\node[shape=circle, fill=black, scale=0.35] (A) at  (.17,-.6) {};
\end{tikzpicture}
}}
\newcommand{\Isodot}{
\ensuremath{\begin{tikzpicture}
\node (A) at (0,-.4) {};
\node (B) at (.4,-.4) {};
\draw[n, line width=.1ex] (0,-.6) to node[above,la]{$\cong$} (.4,-.6);
\node[shape=circle, fill=black, scale=0.35] (A) at  (.17,-.6) {};
\end{tikzpicture}
}}
\newcommand{\Set}{\mathrm{Set}}
\newcommand{\Cof}{\mathrm{Cof}}
\newcommand{\cofM}{\mathrm{Cof}_\cM}
\newcommand{\fibM}{\fib_\cM}
\newcommand{\weM}{\W_\cM}
\newcommand{\trivfibM}{\fibM\cap \weM}
\newcommand{\IM}{\cI_\cM}
\newcommand{\JM}{\cJ_\cM}
\newcommand{\an}{\ensuremath{\mathrm{An}}}
\newcommand{\cof}{\ensuremath{\mathrm{cof}}}
\newcommand{\nfib}{\ensuremath{\mathrm{NFib}}}
\newcommand{\fib}{\ensuremath{\mathrm{Fib}}}
\newcommand{\inj}{\mathrm{inj}}
\newcommand{\fibrant}{\mathrm{fib}}
\newcommand{\Path}{\mathrm{Path}}
\newcommand{\id}{\mathrm{id}}
\newcommand{\op}{\mathrm{op}}
\newcommand{\Dop}{\Delta^{\op}}
\newcommand{\Sq}{\mathrm{Sq}}
\newcommand{\C}{\mathcal{C}}
\newcommand{\I}{\mathcal{I}}
\newcommand{\J}{\mathcal{J}}
\newcommand{\Wf}{\mathcal{W}_f}
\newcommand{\W}{\mathcal{W}}
\newcommand{\dblcat}{\mathrm{DblCat}}
\newcommand{\cat}{\mathrm{Cat}}
\newcommand{\twocat}{2\cat}
\newlist{rome}{enumerate}{7}
\setlist[rome]{label=(\roman*)}
\newtheorem{theorem}{Theorem}[section]
\newtheorem{cor}[theorem]{Corollary}
\newtheorem{prop}[theorem]{Proposition}
\newtheorem{lem}[theorem]{Lemma}
\declaretheorem[name=Theorem,numbered=yes]{theoremA}
\theoremstyle{definition}
\newtheorem{defn}[theorem]{Definition}
\newtheorem{ex}[theorem]{Example}
\newtheorem{notation}[theorem]{Notation}
\theoremstyle{remark}
\newtheorem{rem}[theorem]{Remark}
\crefname{theorem}{Theorem}{Theorems}
\crefname{cor}{Corollary}{Corollaries}
\crefname{prop}{Proposition}{Propositions}
\crefname{lem}{Lemma}{Lemmas}
\crefname{defn}{Definition}{Definitions}
\crefname{terminology}{Terminology}{Terminologies}
\crefname{ex}{Example}{Examples}
\crefname{notation}{Notation}{Notations}
\crefname{descr}{Description}{Descriptions}
\crefname{constr}{Construction}{Constructions}
\crefname{rem}{Remark}{Remarks}
\renewcommand\thepart{\Roman{part}.}
\renewcommand\part{%
  \par
  \addvspace{4ex}%
  \@afterindenttrue
  \secdef\@part\@spart
}
\def\@part[#1]#2{%
    \ifnum \c@secnumdepth >\m@ne
      \refstepcounter{part}%
      
      \addcontentsline{toc}{section}{\hspace{-.5cm} \bfseries\thepart\hspace{1em}#1}%
    \else
      \addcontentsline{toc}{section}{#1}%
    \fi
    {\parindent \z@ \raggedright
     \interlinepenalty \@M
     \normalfont
     \thispagestyle{empty}
     \ifnum \c@secnumdepth >\m@ne
      \centering\large\textsc{\textbf{\thepart}}\nobreakspace
     \fi
     \centering\large\textsc{\textbf{#2}}
     \par}%
    \nobreak
    \vskip .3cm
    \@afterheading}
\def\@spart#1{%
      \addcontentsline{toc}{part}{#1}%
    {\parindent \z@ \raggedright
     \interlinepenalty \@M
     \normalfont
     \thispagestyle{plain}
     \centering\large\textsc{\textbf{#1}}
     \par}%
    \nobreak
    \vskip .3cm
    \@afterheading}
\title{Fibrantly-transferred model structures}
\author[L.\ Guetta]{L\'eonard Guetta}
\address{Mathematical Institute, Utrecht University, 3584 CD Utrecht, Nederland}
\email{l.s.guetta@uu.nl}
\author[L.\ Moser]{Lyne Moser}
\address{Fakultät für Mathematik, Universität Regensburg, 93040 Regensburg, Germany}
\email{lyne.moser@ur.de}
\author[M.\ Sarazola]{Maru Sarazola}
\address{School of Mathematics, University of Minnesota, Minneapolis MN, 55415, USA}
\email{maru@umn.edu}
\author[P.\ Verdugo]{Paula Verdugo}
\address{{Max Planck Institute for Mathematics, Bonn NRW, 53111, Germany}}
\email{verdugo@mpim-bonn.mpg.de}
\begin{document}

\begin{abstract}
    We develop new techniques for constructing model structures from a given class of cofibrations, together with a class of fibrant objects and a choice of weak equivalences between them. As a special case, we obtain a more flexible version of the classical right-transfer theorem in the presence of an adjunction. Namely, instead of lifting the classes of fibrations and weak equivalences through the right adjoint, we now only do so between fibrant objects, which allows for a wider class of applications. 
    
\end{abstract}

\maketitle

\setcounter{tocdepth}{1}
\tableofcontents

\section{Introduction}

Model categories, introduced by Quillen \cite{Quillen}, are nowadays an ubiquitous tool in algebraic topology and homotopical algebra as they provide an abstract framework to do homotopy theory. A model category $(\cM, \Cof, \fib, \cW)$ consists of a category $\mathcal{M}$ that has limits and colimits, together with three distinguished classes of morphisms in $\cM$ called \emph{cofibrations, fibrations}, and \emph{weak equivalences}. Weak equivalences are the driving force in a model category, and they can encode different flavors of ``sameness'', ranging from the most evident choice of isomorphisms, to a topologically meaningful notion of weak homotopy equivalence, to the quasi-isomorphisms one encounters in algebra. In turn, the additional classes of fibrations and cofibrations facilitate the construction of ``path'' and ``cylinder'' objects, as well as the computation of homotopy limits and colimits. Moreover, the available tools in a model category allow us to compare homotopy theories through derived functors and derived equivalences.

The robustness of this type of structure comes with a drawback: in practice, it is often very hard to prove that three given classes of morphisms in a category satisfy the requirements of a model structure. To this end, there are several results in the literature that provide techniques for constructing model structures; see for instance \cite{transfer1, beke, cisinski, mixedMS, olschok, stanc, HKRS}. A particularly advantageous situation happens when the category $\cC$ on which we wish to construct a model structure is related to a known model structure $\cM$ through an adjunction
\begin{tz}
\node[](1) {$\C$}; 
\node[right of=1,xshift=1cm](2) {$\cM.$}; 

\draw[->] ($(1.east)-(0,5pt)$) to node[below,la]{$R$} ($(2.west)-(0,5pt)$);
\draw[->] ($(2.west)+(0,5pt)$) to node[above,la]{$L$} ($(1.east)+(0,5pt)$);

\node[la] at ($(1.east)!0.5!(2.west)$) {$\bot$};
\end{tz}
The standard way to proceed, which goes back at least to Crans \cite{transfer1}, is to \emph{right-transfer} a model structure on~$\cC$ from the one on~$\cM$ along the right adjoint $R$. If successful, this produces a model structure in which a morphism $f$ in $\cC$ is a weak equivalence (resp.\ fibration) if and only if $Rf$ is a weak equivalence (resp.\ fibration) in $\cM$. Hess\textendash K\c{e}dziorek\textendash Riehl\textendash Shipley give in \cite{HKRS} a streamlined list of conditions one can check to build such a model structure, which are inspired by Quillen's Path Object Argument \cite{Quillen}.

However, in some settings, attempting to right-transfer a model structure proves too ambitious. An example can be found by considering $\dblcat$, the category of double categories and double functors. Double categories are 2-dimensional structures having objects, and both horizontal and vertical morphisms, as well as 2-dimensional morphisms called \emph{squares}. Introduced by Ehresmann in \cite{ehresmann}, they have been the focus of much work in the past decades; see for instance \cite{grandispare1,dawsonpare,holonomy,grandispare2,fiore,kock, shulman1,shulman,garner,grandispare3}, among several others. As part of their program to construct model structures on $\dblcat$, Fiore\textendash Paoli\textendash Pronk \cite{FPP} study their relation to the canonical model structure on $\cat$\textemdash the category of (small) categories and functors\textemdash through the horizontal nerve $N^h\colon \dblcat\to \cat^{\Dop}$, which is a right adjoint. Unfortunately, they find that it is not possible to right-transfer a model structure on $\dblcat$ along $N^h$ from the Reedy model structure on $\cat^{\Dop}$; see \cite[Theorem 7.22]{FPP}.

A similar situation happens yet again when we consider the category $\mathrm{Sp}^\Sigma$ of symmetric spectra, which provides one of the most general and flexible frameworks for stable homotopy theory, as a symmetric monoidal model structure. Unfortunately, the classical constructions of the stable model structure for symmetric spectra due to Hovey--Shipley--Smith \cite{HSS} and Mandell--May--Schwede--Shipley \cite{MMSS} are far from straightforward. A major obstruction stems from the fact that one cannot right-transfer this model structure through the forgetful functor $U\colon \mathrm{Sp}^\Sigma\to \mathrm{Sp}_{\mathrm{st}}^{\mathbb{N}}$ to the well-understood stable model structure on sequential spectra of Bousfield--Friedlander \cite{BF}.

The true reason behind these failures is not merely technical, but instead comes from a meaningful fact in homotopy theory: in many settings, one should not expect morphisms between all objects to exhibit a good behavior, but rather only between fibrant ones (i.e., those objects $X\in\cC$ for which the unique morphism to the terminal object $X\to 1$ is a fibration). Fibrant objects play a key role in a model category, and they often consist of the ``well-behaved'' objects in the theory of interest. For instance, in the injective model structure on non-negatively graded cochain complexes, the fibrant objects are the degreewise injective complexes; in the classical model structure for topological spaces, all objects are fibrant (modeling $\infty$-groupoids); in the Joyal model structure on simplicial sets \cite{JoyalVolumeII}, the fibrant objects are the quasi-categories (modeling $\infty$-categories); and in the stable model structure on symmetric spectra, the fibrant objects are the $\Omega$-spectra. 
Moreover, the latter two model structures are well-known examples where the intuitive and convenient characterizations one would hope for do not hold for all weak equivalences and fibrations, but rather only for those between fibrant objects. For quasi-categories, these are the equivalences of $\infty$-categories and the isofibrations; see for instance \cite[Section 6]{joyalMS}. For $\Omega$-spectra, these are the $\pi_*$-isomorphisms and the levelwise fibrations; see \cite{HSS}.

However, this need not be an impediment in finding a model structure that captures the intended homotopy theory. Indeed, together with a class of trivial fibrations, the fibrant
objects and weak equivalences between them fully determine the homotopy theory\footnote{The homotopy category or $\infty$-category associated to a model category can be constructed by first restricting to the cofibrant-fibrant objects, and then inverting the weak equivalences between them; see
\cite{dwyerspalinski,dwyerkan}}, and so it should be possible to work in a setting that uses only this data, as opposed to over-determining all weak equivalences. With this in mind, one would like to have a technique to \textit{fibrantly-transfer} a model structure through a right adjoint as above, where now the weak equivalences and fibrations are only transferred \textit{between fibrant objects}. This is the content of Theorem 3.5, outlined below.

\begin{theoremA}\label{thm1intro}
Let $(\cM,\cofM,\fibM,\weM)$ be a combinatorial model category, and let $\C$ be a locally presentable category. Suppose that we have an adjunction 
\begin{tz}
\node[](1) {$\C$}; 
\node[right of=1,xshift=1cm](2) {$\cM$}; 

\draw[->] ($(1.east)-(0,5pt)$) to node[below,la]{$R$} ($(2.west)-(0,5pt)$);
\draw[->] ($(2.west)+(0,5pt)$) to node[above,la]{$L$} ($(1.east)+(0,5pt)$);

\node[la] at ($(1.east)!0.5!(2.west)$) {$\bot$};
\end{tz}
and that the following properties are satisfied:
\begin{enumerate}[label=(\arabic*)]
\item  every morphism in $R^{-1}(\trivfibM)$ has a fibrant replacement in $R^{-1}(\weM)$,
    \item  for every object $X$ such that $RX$ is fibrant, there is a factorization of the diagonal morphism
    $ X\xrightarrow{w} P X\xrightarrow{p} X\times X$
    with $w\in R^{-1}(\weM)$ and $p\in R^{-1}(\fibM)$.
\end{enumerate}
Then, there exists a combinatorial model structure on $\C$ in which an object $X$ is fibrant if and only if $RX$ is fibrant in $\cM$, and a morphism $f$ is a trivial fibration if and only if $Rf$ is so in $\cM$. Moreover, a morphism $f$ in $\cC$ between fibrant objects is a weak equivalence (resp.\ fibration) if and only if $Rf$ is so in $\cM$.
\end{theoremA}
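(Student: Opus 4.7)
The plan is to deduce the theorem from the general construction of fibrantly-induced model structures developed earlier in the paper. That framework builds a combinatorial model structure from the input of a set of generating cofibrations, a class of fibrant objects, and a class of weak equivalences between them, subject to certain axioms being verified. In our adjunction setting, the natural choices for this input data are the set $L\IM := \{Li \mid i \in \IM\}$ of generating cofibrations, the class $\{X \in \cC \mid RX \text{ is fibrant in } \cM\}$ of fibrant objects, and the class $\{f \mid Rf \in \weM\}$ of weak equivalences between fibrant objects.

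A first key observation is that, by adjointness, the morphisms with the right lifting property against $L\IM$ are precisely those whose $R$-image has the right lifting property against $\IM$; hence $(L\IM)\mathrm{-inj} = R^{-1}(\trivfibM)$. Consequently, the small object argument applied to $L\IM$ yields a factorization of any morphism as an $L\IM$-cell followed by a morphism in $R^{-1}(\trivfibM)$, matching the ``trivial fibrations'' described in the statement. The chosen class of weak equivalences between fibrant objects inherits closure under two-out-of-three and under retracts from $\weM$. With this setup I would translate the two conditions of the theorem into the remaining hypotheses of the general framework: condition (2) supplies a path object for every fibrant object, while condition (1) supplies fibrant replacements of morphisms in $R^{-1}(\trivfibM)$ whose $R$-image is a weak equivalence\textemdash precisely what is needed to run a path-object-style argument recognizing such morphisms as weak equivalences in the constructed model structure.

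The main obstacle is the factorization of an arbitrary morphism as a trivial cofibration followed by a fibration. In the fibrantly-induced setting the generating trivial cofibrations are not simply $L\JM$, since fibrations are only directly cut out by $R$ between fibrant objects; they need to be built by combining $L\JM$ with data extracted from the path objects and the fibrant replacements provided by conditions (1) and (2). The delicate point is to show that the small object argument applied to this augmented set produces trivial cofibrations that are genuinely weak equivalences, and that the corresponding right class agrees, between fibrant objects, with $R^{-1}(\fibM)$. Once these verifications are complete, the fibrancy and weak equivalence descriptions in the conclusion follow by construction, and combinatoriality follows from the local presentability of $\cC$ together with the fact that all the generating sets arise from the combinatorial data of $\cM$.
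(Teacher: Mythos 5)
Your overall strategy---reducing to the general fibrantly-induced construction of \cref{thm:main} with input $\I=L\IM$, naive fibrant objects $\{X \mid RX \text{ fibrant}\}$, and $\Wf$ the restriction of $R^{-1}(\weM)$ to morphisms between these---is exactly the paper's, and your identification of $\inj(L\IM)=R^{-1}(\trivfibM)$ and of the roles of hypotheses (1) and (2) is correct. However, your final paragraph reveals a misconception about what the general theorem asks of the user. You present the factorization of an arbitrary morphism as a trivial cofibration followed by a fibration as ``the main obstacle,'' to be handled by augmenting $L\JM$ into a larger generating set of trivial cofibrations built from the path objects and fibrant replacements, and you leave this unresolved. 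This is not a hypothesis of \cref{thm:main} and is never done in the paper: the general theorem rests on Smith's theorem, which produces the required factorizations from the accessibility of $\W$, the $2$-out-of-$3$ property, $\inj(\I)\subseteq\W$, and closure of $\cof(\I)\cap\W$ under pushout and transfinite composition---no explicit set of generating trivial cofibrations is ever exhibited. So the ``delicate point'' you flag need not be addressed once the hypotheses of \cref{thm:main} are checked; conversely, the route you sketch for it (a small object argument on an augmented set, followed by a proof that the resulting cell complexes are weak equivalences) is a genuinely different and substantially harder undertaking that you do not carry out, so as written the proof is incomplete.

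What actually remains, and what your write-up omits, are two of the five hypotheses of \cref{thm:main}. First, the accessibility condition: one must exhibit a class $\overline{\cW}$ whose restriction to naive fibrant objects is $\Wf$ and which is accessible as a full subcategory of $\C^{\mathbbm{2}}$; here one takes $\overline{\cW}=R^{-1}(\weM)$, accessible because $\weM\subseteq\cM^{\mathbbm{2}}$ is accessible ($\cM$ being combinatorial) and $R$ is an accessible functor. Your closing remark that combinatoriality ``follows from the generating sets'' does not address this. Second, the condition $\nfib\cap\Wf\subseteq\inj(\I)$, which in this setting is the one-line computation $R^{-1}(\fibM)\cap R^{-1}(\weM)=R^{-1}(\fibM\cap\weM)=\inj(L\IM)$. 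In addition, \cref{thm:main} requires $2$-out-of-$6$ for $\Wf$, not merely $2$-out-of-$3$ and retracts (though the same argument from $\weM$ supplies it), and one must check $\an=\cof(L\JM)\subseteq\cof(L\IM)=\cof(\I)$, which holds since $L$ preserves colimits and retracts. These verifications are all short, but they are the actual content of the deduction.
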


Whenever the classical right-transfer result holds, then so does \cref{thm1intro}, and both produce the same model structure on the category $\cC$. However, this new method has the flexibility required to avoid the issues preventing us from using the classical right-transfer results in certain settings. 

For instance, our result can now be applied to the nerve functor $N^h\colon \dblcat\to \cat^{\Dop}$ to obtain a fibrantly-transferred model structure on $\dblcat$ from the Reedy model structure on $\cat^{\Dop}$, which is the aim of \cref{application:cat}. This gives a first example where the right-transferred model structure does not exist, but the fibrantly-transferred one does. In a similar vein, in forthcoming work we use \cref{thm1intro} to define a model structure on $\cat(n\cat)$\textemdash the category of categories internal to $n$-categories\textemdash related to the canonical model structure on $n\cat$.

A second example appears in recent work of Malkiewich and Sarazola \cite{symsp}, where they consider the forgetful functor $U\colon \mathrm{Sp}^\Sigma\to \mathrm{Sp}_{\mathrm{st}}^{\mathbb{N}}$ from the category of symmetric spectra, to the category of sequential spectra endowed with the stable model structure. They use \cref{thm1intro} to obtain the stable model structure on $\mathrm{Sp}^\Sigma$; notably, this makes it possible to completely avoid introducing the much more technical notion of stable equivalences between non-fibrant objects, and only rely on well-known facts about $\pi_*$-isomorphisms in~$\mathrm{Sp}^{\mathbb{N}}$.

\cref{thm1intro} is in fact an instance of a more general technique for constructing model structures from a given class of cofibrations, and of fibrant objects together with a choice of weak equivalences between these, details of which can be found in \cref{thm:main}.

\begin{theoremA}\label{thm2intro}
Let $\C$ be a locally presentable category and $\I$ be a set of morphisms in~$\C$. Suppose in addition that we have a class of ``naive fibrant objects'' in $\C$ and a class $\Wf$ of morphisms in $\C$ between them satisfying certain technical conditions.

Then there exists a combinatorial model structure on $\C$ with cofibrations generated by the set $\cI$, fibrant objects given by the naive fibrant objects, and such that the class of weak equivalences between fibrant objects is precisely $\Wf$.
\end{theoremA}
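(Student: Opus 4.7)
My plan is to invoke Jeff Smith's recognition theorem for combinatorial model structures. We already have the set $\I$ of generating cofibrations; what is needed is (a) a class $\W$ of weak equivalences on all of $\C$ extending $\Wf$, and (b) a set $\J \subseteq \cof(\I)$ of generating trivial cofibrations. Smith's theorem will produce the model structure once we verify that $\W$ satisfies the 2-out-of-3 property, is accessibly embedded in the arrow category $\C^{\to}$, contains $\inj(\I)$, and that $\cof(\I) \cap \W$ is closed under pushouts and transfinite composition.

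The first step is to build $\J$. Using the technical hypotheses, I would exhibit a set $\J \subseteq \cof(\I)$ such that an object is injective with respect to $\J$ (relative to the terminal object) precisely when it is naive fibrant, and such that a morphism between naive fibrant objects which is injective with respect to $\J$ is precisely what one would wish to call a fibration. Applying the small object argument to $\J$ then yields a functorial fibrant replacement $\eta_X \colon X \to \hat{X}$ in $\cof(\J) \subseteq \cof(\I)$ with $\hat{X}$ naive fibrant. I then define $\W$ to consist of those $f$ such that the induced map $\hat{f} \colon \hat{X} \to \hat{Y}$ on fibrant replacements lies in $\Wf$. The 2-out-of-3 property for $\W$ is inherited from the corresponding property for $\Wf$ (which should be among the technical hypotheses), and accessibility of $\W$ follows from the accessibility of the fibrant replacement functor together with a hypothesis that $\Wf$ is accessibly embedded in the arrow category spanned by the naive fibrant objects.

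The two remaining and more delicate conditions are $\inj(\I) \subseteq \W$ and closure of $\cof(\I) \cap \W$ under pushouts and transfinite composition. For the first, I would apply fibrant replacement to a map in $\inj(\I)$ and invoke a technical hypothesis asserting that trivial fibrations between naive fibrant objects lie in $\Wf$, combined with a naturality/retract argument. For the second, the standard strategy is to prove the stronger identity $\cof(\I) \cap \W = \cof(\J)$: the inclusion $\cof(\J) \subseteq \cof(\I) \cap \W$ reduces to showing that every generator in $\J$ lies in $\W$, and the reverse inclusion is obtained by a retract-of-lifting argument against maps injective with respect to $\J$ between naive fibrant objects, i.e.\ the intended fibrations.

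The main obstacle I anticipate is the identification $\cof(\I) \cap \W = \cof(\J)$, equivalently the production of a factorization of an arbitrary morphism into a map in $\cof(\I) \cap \W$ followed by a fibration. This is the classical sticking point in Smith-style arguments, and is where the path-object-type data presumably encoded in the technical hypotheses will do most of the work, in the spirit of Quillen's Path Object Argument exploited by Hess--K\c{e}dziorek--Riehl--Shipley \cite{HKRS}. Once Smith's theorem applies, the fibrant objects coincide with the naive fibrant ones by construction of $\J$ (together with a likely closure-under-retracts hypothesis), and the weak equivalences between fibrant objects recover $\Wf$ because on fibrant inputs the unit $\eta$ is a weak equivalence with essentially identical source and target.
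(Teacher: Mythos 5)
Your overall architecture matches the paper's: both proofs run through Smith's recognition theorem, both use an auxiliary set $\J$ generating a weak factorization system $(\an,\nfib)$ whose injective objects are the naive fibrant ones, both define $\W$ as the preimage of $\Wf$ under an accessible functorial naive fibrant replacement, and both transfer 2-out-of-3 and accessibility from $\Wf$ to $\W$ along that functor. (Two small remarks on the easier conditions: the paper simply takes $\inj(\I)\subseteq\W$ as one of the technical hypotheses rather than deriving it, and for good reason — the functorial fibrant replacement of a map in $\inj(\I)$ need not again lie in $\inj(\I)$, so your proposed ``naturality/retract argument'' from ``trivial fibrations between fibrant objects are in $\Wf$'' does not go through formally; in the paper's applications this hypothesis requires constructing bespoke fibrant replacements that preserve trivial fibrations.)

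The genuine gap is in your treatment of Smith's last condition. The identity $\cof(\I)\cap\W=\cof(\J)$ that you propose to prove is \emph{false} in the intended generality, and trying to prove it is the wrong move: the entire point of this framework is that the anodyne extensions $\cof(\J)$ are only \emph{some} of the trivial cofibrations, and correspondingly that the naive fibrations $\nfib=\inj(\J)$ strictly contain the fibrations of the resulting model structure (they agree only between fibrant objects). If $\cof(\I)\cap\W=\cof(\J)$ held, the model structure would be the $\J$-right-induced one, which in the paper's motivating example (the horizontal nerve $N^h\colon\dblcat\to\cat^{\Dop}$) does not exist. Your own ``reverse inclusion'' argument betrays the problem: lifting against naive fibrations \emph{between naive fibrant objects} does not certify membership in $\cof(\J)$, which requires lifting against \emph{all} naive fibrations. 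The correct statement, and the one the paper proves (following Joyal, Cisinski, and Stanculescu), is precisely that a map in $\cof(\I)$ is a weak equivalence if and only if it has the left lifting property against naive fibrations between naive fibrant objects. That characterization is enough: a class defined by a left lifting property against a fixed class of maps is automatically closed under pushout and transfinite composition, which is all Smith's theorem asks for — one never needs to identify $\cof(\I)\cap\W$ with the cofibrant closure of a set, nor to produce the (trivial cofibration, fibration) factorization by hand, since Smith's theorem supplies it. The path-object/anodyne-extension hypothesis enters not to build that factorization directly but to establish the lifting characterization (via the fact that anodyne extensions between fibrant objects lie in $\Wf$ and that $\nfib\cap\Wf\subseteq\inj(\I)$).
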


This result is similar in flavor to Smith's theorem \cite{beke} and to a result by Stanculescu \cite{stanc}, but unlike these, it restricts the conditions to be verified to the morphisms between fibrant objects which is where we assume the user has the most control. In practice, this restriction significantly reduces the difficulty for the user, and we expect this theorem to have a wide range of applications.

 Finally, in recent work \cite{dblcatequivs}, the second, third, and fourth named authors show how \cref{thm2intro} can be used to construct a plethora of model structures on $\dblcat$ whose homotopy theories encode a range of 2-dimensional structures, such as 2-categories and 2-groupoids. These recover the model structure on $\dblcat$ for weakly horizontally invariant double categories established by Moser--Sarazola--Verdugo in \cite{whi}, as well as the gregarious model structure of Campbell \cite{Camp} and the model structure for  equipments of Verdugo \cite{pauthesis}. Moreover, we  construct a new model structure on $\dblcat$ 
making the square functor $\Sq\colon\twocat\to\dblcat$ into a Quillen equivalence, shedding light on an $\infty$-categorical question by Gaitsgory\textendash Rozenblyum \cite[Chapter 10, Theorem 5.2.3]{roz} which was recently proved by Abell\'an \cite{Abellan}.

Since the first version of this paper was made available, even more examples of \cref{thm2intro} have come to light (see \cite{MoserNuiten,symsp,grothMS}); we give an overview of these in \cref{section:newexamples}. 
We anticipate that Theorems A and B will continue to be of use in many new applications.

\subsection*{Acknowledgments}

The authors would like to thank Yonatan Harpaz for suggesting one of the applications included in this paper. We are also grateful to Emily Riehl, Martina Rovelli, Viktoriya Ozornova, and Dominic Verity for helpful discussions related to the subject of this paper.

This material is based upon work supported by the National Science Foundation under Grant No.\ DMS-1928930 while the last three named authors participated in a program supported by the MSRI (now the Simons Laufer Mathematical Sciences Institute). The program was held in the summer of 2022 in partnership with the Universidad Nacional Aut\'onoma de M\'exico.

During the realization of this work, the first and second named authors were supported by the Max Planck Institute for Mathematics, and additionally, the second named author was a member of the Collaborative Research Centre ``SFB 1085: Higher Invariants'' funded by the Deutsche Forschungsgemeinschaft (DFG). The third named author was partially supported by a National Science Foundation grant DMS-2506116, and the last named author was partially supported while at Johns Hopkins by a National Science Foundation grant DMS-1652600, and also by an international Macquarie University Research Excellence Scholarship.

\section{Fibrantly constructed model structures}\label{section:abstract}

The main result in this paper is designed to be used in a setting where one has a locally presentable category $\C$, and classes of desired cofibrations and fibrant objects in mind, together with classes of weak equivalences and fibrations between these fibrant objects, and wishes to extend this to a model structure on the category $\C$.

In order to achieve this, we will require the existence of an auxiliary weak factorization system $(\an,\nfib)$ generated by a set $\J$, that serves a similar role as the \emph{anodyne extensions} and \emph{naive fibrations} of \cite{cisinski}, from which we borrow this terminology. As their name suggests, anodyne extensions will be some, but not all, of the trivial cofibrations in our model structure. However, anodyne extensions play a crucial role in constructing the class of weak equivalences of the model structure. 

To explain this, let us briefly recall how any weak  factorization system $(\an,\nfib)$ provides us with a notion of ``fibrant objects'' and of ``fibrant replacements'' relative to the factorization system, which in the context of our paper we will call \emph{naive fibrant objects} and  \emph{naive fibrant replacements}.

\begin{defn}
An object $X\in \C$ is \emph{naive fibrant} if the unique morphism $X\to 1$ to the terminal object is a naive fibration. 
\end{defn}

\begin{defn}
Given an object $X\in\C$, a \emph{naive fibrant replacement} of $X$ is an anodyne extension
\[\iota \colon X \to X'\]
such that $X'$ is naive fibrant.

Similarly, given a morphism $f \colon X\to Y$ in $\C$, a naive fibrant replacement of  $f$ is a commutative square
\begin{tz}
\node[](1) {$X$}; 
\node[right of=1](2) {$Y$}; 
\node[below of=1](1') {$X'$}; 
\node[below of=2](2') {$Y'$}; 

\draw[->] (1) to node[above,la]{$f$} (2); 
\draw[->] (1) to node[left,la]{$\iota_X$} (1'); 
\draw[->] (2) to node[right,la]{$\iota_Y$} (2'); 
\draw[->] (1') to node[below,la]{$f'$} (2');
\end{tz}
where $X'$, $Y'$ are naive fibrant, and $\iota_X$, $\iota_Y$ are anodyne extensions.
\end{defn}

\begin{rem} \label{rem:naivefibrep}
Note that naive fibrant replacements always exist, since we can use the  factorization system $(\an,\nfib)$ to factor the canonical morphism $X \to 1$ as an anodyne extension followed by a naive fibration.

Similarly, for any morphism $f\colon X\to Y$, we can always find a naive fibrant replacement of $f$ by taking naive fibrant replacements of $X$ and $Y$ and using the lifting property of anodyne extensions against naive fibrant objects as follows.
\begin{tz}
\node[](1) {$X$}; 
\node[right of=1](2) {$Y$}; 
\node[below of=1](1') {$X'$}; 
\node[right of=2](3) {$Y'$}; 

\draw[->] (1) to node[above,la]{$f$} (2); 
\draw[->] (2) to node[above,la]{$\iota_Y$} (3); 
\draw[->] (1) to node[left,la]{$\iota_X$} (1'); 
\draw[->,dashed] (1') to node[below,la]{$f'$} (3);
\end{tz}

More abstractly, a result by Bourke and Garner guarantees that any set $\J$ of morphisms in a locally presentable category $\C$ generates an accessible algebraic weak factorization system whose underlying ordinary weak factorization system is the usual weak factorization system
cofibrantly generated by $\J$ (see \cite[Proposition 16]{garner}). In particular, this implies the existence of a naive fibrant replacement \emph{functor} $(-)^{\fibrant}\colon \C\to \C$ which is \emph{accessible}.
\end{rem}

Now, if in addition to the factorization system $(\an, \nfib)$ we are given a class $\Wf$ of morphisms between naive fibrant objects containing the isomorphisms, which we interpret as weak equivalences between fibrant objects, we can  use the above definitions to construct a new class $\W$, which will play the role of our weak equivalences in $\C$.

\begin{defn}\label{def:we}
A morphism $f \colon X \to Y$ in $\C$ is a \emph{weak equivalence} if there exists a naive fibrant replacement of $f$
\begin{tz}
\node[](1) {$X$}; 
\node[right of=1](2) {$Y$}; 
\node[below of=1](1') {$X'$}; 
\node[below of=2](2') {$Y'$}; 

\draw[->] (1) to node[above,la]{$f$} (2); 
\draw[->] (1) to node[left,la]{$\iota_X$} (1'); 
\draw[->] (2) to node[right,la]{$\iota_Y$} (2'); 
\draw[->] (1') to node[below,la]{$f'$} (2');
\end{tz}
such that $f'$ is in $\Wf$. We denote by $\W$ the class of weak equivalences.
\end{defn}

Directly from the definition, we can observe the following.

\begin{lem}\label{ex:anodynewe}
Every anodyne extension is a weak equivalence.
\end{lem}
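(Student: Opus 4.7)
The plan is to unpack \cref{def:we} directly and construct an explicit naive fibrant replacement square witnessing that a given anodyne extension $\iota\colon X\to Y$ is a weak equivalence. The key observation is that $\W_f$ is assumed to contain all isomorphisms, and anodyne extensions are closed under composition (they form the left class of a weak factorization system, hence are closed under all the usual left-class operations). So if we can arrange the naive fibrant replacement of $\iota$ to sit over an identity morphism in the bottom row, we are done.

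Concretely, I would first choose a naive fibrant replacement $\iota_Y\colon Y\to Y'$ of the codomain $Y$, which exists by \cref{rem:naivefibrep}. Then I would take $X'\coloneqq Y'$, let $\iota_X$ be the composite $\iota_Y\circ\iota\colon X\to Y'$, and set $f'\coloneqq \id_{Y'}$. This fills in the square
\begin{tz}
\node[](1) {$X$};
\node[right of=1](2) {$Y$};
\node[below of=1](1') {$Y'$};
\node[below of=2](2') {$Y'$};

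\draw[->] (1) to node[above,la]{$\iota$} (2);
\draw[->] (1) to node[left,la]{$\iota_Y\circ\iota$} (1');
\draw[->] (2) to node[right,la]{$\iota_Y$} (2');
\draw[->] (1') to node[below,la]{$\id$} (2');
\end{tz}
which commutes by construction.

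It then remains to verify that this square satisfies the conditions of a naive fibrant replacement with $f'\in\W_f$. The object $Y'$ is naive fibrant by choice. The vertical map $\iota_Y$ is an anodyne extension, and the vertical composite $\iota_Y\circ\iota$ is an anodyne extension since both $\iota$ and $\iota_Y$ are and anodyne extensions are stable under composition. Finally, $f'=\id_{Y'}$ is an isomorphism, and therefore lies in $\W_f$ by the standing assumption that $\W_f$ contains all isomorphisms. By \cref{def:we}, $\iota$ is a weak equivalence.

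I do not anticipate a real obstacle here: the statement is essentially a sanity check on the definitions, and the only subtlety is noting that the two required closure properties (anodyne extensions under composition, and $\W_f$ containing isomorphisms) are exactly what makes the trivial choice $f'=\id$ work.
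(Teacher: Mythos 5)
Your proposal is correct and coincides with the paper's own proof: both take a naive fibrant replacement $\iota_Y\colon Y\to Y'$ of the codomain and use the square with left edge $\iota_Y\circ\iota$ and bottom edge $\id_{Y'}$, noting that the composite of anodyne extensions is anodyne and that $\id_{Y'}\in\Wf$ since $\Wf$ contains the isomorphisms.
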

\begin{proof}
If $f \colon X\to Y$ is an anodyne extension, and $ \iota_Y \colon Y \to Y'$ is a naive fibrant replacement of $Y$, then the commutative square
\begin{tz}
\node[](1) {$X$}; 
\node[right of=1](2) {$Y$}; 
\node[below of=1](1') {$Y'$}; 
\node[below of=2](2') {$Y'$}; 

\draw[->] (1) to node[above,la]{$f$} (2); 
\draw[->] (1) to node[left,la]{$\iota_Y f$} (1'); 
\draw[->] (2) to node[right,la]{$\iota_Y$} (2'); 
\draw[->] (1') to node[below,la]{$\id_{Y'}$} (2');
\end{tz}
 is a naive fibrant replacement of $f$. As $\id_{Y'}\in\Wf$, by definition we have that $f\in\W$.
\end{proof}

\begin{rem}
The fact that anodyne extensions are weak equivalences, together with the foresight that naive fibrant objects will agree with the fibrant objects, imply that these naive fibrant replacements will truly be fibrant replacements in the proposed model structure. Moreover, note that in any model structure, a morphism is a weak equivalence if and only if any of its fibrant replacements is also a weak equivalence. Recalling that we intuitively regard the class $\Wf$ as the weak equivalences between naive fibrant objects, these observations justify our definition of the class of weak equivalences $\W$.
\end{rem}

Having introduced the main characters, we are now equipped to state our theorem, whose proof  will occupy the remainder of this section. First, let us recall the following standard terminology. 

\begin{notation}\label{notation}
Let $\mathcal{I}$ be a set of morphisms in a cocomplete category $\cC$. A morphism in~$\cC$ is
\begin{rome}
   \item \textbf{$\cI$-injective} if it has the right lifting property with respect to every morphism in~$\cI$; the class of all such morphisms is denoted $\inj(\I)$,
    \item an \textbf{$\cI$-cofibration} if it has the left lifting property with respect to every $\cI$-injective morphism; the class of all such morphisms is denoted $\cof(\I)$.
\end{rome}

Recall that in a locally presentable category $\cC$, the pair $(\cof(\I),\inj(\I))$ forms a weak factorization system for any set $\cI$ of morphisms in $\cC$.

Additionally, if $\mathcal{D}$ is any class of morphisms in $\C$, we use $\mathcal{D}^\boxslash$ to denote the class of morphisms in $\C$ that have the right lifting property with respect to every morphism in $\mathcal{D}$. 
\end{notation}

\begin{theorem}\label{thm:main}
Let $\C$ be a locally presentable category, $\I$ be a set of morphisms in $\C$, and $(\an,\nfib)$ be a weak factorization system in $\C$ generated by a set such that $\an\subseteq\cof(\I)$. Suppose in addition that we have a class $\Wf$ of morphisms in $\C$ between naive fibrant objects such that
\begin{enumerate}[label=(\arabic*)]
    \item\label{ax:trivfib} $\inj(\I)\subseteq\W$, where $\W$ is described in \cref{def:we},
    \item\label{2of6Wf} $\Wf$ has 2-out-of-6, 
    \item \label{accessibility} there exists a class $\overline{\cW}$ of morphisms such that $\Wf$ is the restriction of $\overline{\cW}$ to the morphisms between naive fibrant objects and $\overline{\cW}$ considered as a full subcategory of $\C^{\mathbbm{2}}$ is accessible,
    \item\label{anwe} the morphisms in $\an$ between naive fibrant objects are in $\Wf$,
    \item\label{fibwe} $\nfib\cap\Wf\subseteq\inj(\I)$.
\end{enumerate}
Then there exists a combinatorial model structure on $\C$ with cofibrations given by the $\I$-cofibrations, fibrant objects given by the naive fibrant objects, and weak equivalences given by the morphisms in $\W$. Furthermore, weak equivalences (resp.\ fibrations) between fibrant objects are precisely the morphisms in $\Wf$ (resp.\ $\nfib$).
\end{theorem}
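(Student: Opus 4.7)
The plan is to apply a variant of Jeff Smith's combinatorial recognition theorem (as formulated by Beke), taking $\I$ as the generating cofibrations and a set $\J$ generating the weak factorization system $(\an,\nfib)$ as the generating trivial cofibrations. The proof reduces to verifying the standard hypotheses: that $\W$ has 2-out-of-3 and is closed under retracts, that $\W$ is an accessible subcategory of $\C^{\mathbbm 2}$, that $\inj(\I) \subseteq \W$, and the mutual inclusions $\cof(\J) \subseteq \cof(\I) \cap \W$ and $\nfib \cap \W \subseteq \inj(\I)$.

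A foundational lemma to establish first asserts that $f$ lies in $\W$ if and only if \emph{every} naive fibrant replacement of $f$ lies in $\Wf$, not merely some. Given two fibrant replacements of $f$, they can be related by a comparison chain built by lifting anodyne extensions against naive fibrations to the terminal object and to a canonical functorial fibrant replacement obtained by factoring through the $(\an,\nfib)$-WFS; the 2-out-of-6 property~(2) of $\Wf$, together with condition~(4) ensuring that anodynes between naive fibrant objects lie in $\Wf$, then transports membership in $\Wf$ through the chain. Combined with the accessible naive fibrant replacement functor $(-)^{\fibrant}$ of \cref{rem:naivefibrep}, this lemma transfers 2-out-of-3 and retract closure from $\Wf$ to $\W$, and accessibility of $\W$ follows from condition~(3). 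The inclusions $\inj(\I) \subseteq \W$ and $\cof(\J) = \an \subseteq \cof(\I) \cap \W$ are then immediate from condition~(1) and from the hypothesis $\an \subseteq \cof(\I)$ combined with \cref{ex:anodynewe}.

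The main obstacle is verifying $\nfib \cap \W \subseteq \inj(\I)$. For $f\colon X \to Y$ in $\nfib \cap \W$, I construct a canonical naive fibrant replacement by choosing an anodyne $\iota_Y\colon Y \to Y'$ with $Y'$ naive fibrant and factoring $\iota_Y f$ through the $(\an,\nfib)$ weak factorization system as $X \xrightarrow{\iota_X} X' \xrightarrow{f'} Y'$. Then $X'$ is naive fibrant and $f' \in \nfib \cap \Wf \subseteq \inj(\I)$ by condition~(5). Forming the pullback $P := Y \times_{Y'} X'$ yields a trivial fibration $p\colon P \to Y$ in $\inj(\I)$, and the universal map $\phi\colon X \to P$ satisfies $p\phi = f$ and $q\phi = \iota_X \in \an$, where $q\colon P \to X'$ is the other projection. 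The delicate step is to exhibit $f$ as a retract of $p$ through a careful lifting argument combining the anodyne character of $q\phi$ with the trivial-fibration property of $p$; retract closure of $\inj(\I)$ then yields $f \in \inj(\I)$.

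Once all hypotheses are verified, Smith's theorem produces the desired combinatorial model structure with generating cofibrations $\I$ and generating trivial cofibrations $\J$, so that fibrations globally coincide with $\inj(\J) = \nfib$. The foundational lemma (applied with the trivial replacements $\id_X,\id_Y$) identifies $\W$ with $\Wf$ between naive fibrant objects, establishing the final identifications in the statement.
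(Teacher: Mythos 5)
Your setup (the foundational lemma on independence of the naive fibrant replacement, the deduction of 2-out-of-3 and accessibility for $\W$ via the accessible functor $(-)^{\fibrant}$, and the inclusions $\inj(\I)\subseteq\W$ and $\an\subseteq\cof(\I)\cap\W$) matches the paper's development. The gap is in the step you yourself flag as delicate: the global inclusion $\nfib\cap\W\subseteq\inj(\I)$. This does not follow from the hypotheses — condition (5) only gives it for morphisms \emph{between naive fibrant objects} — and it is in fact stronger than what the theorem asserts. The inclusion is equivalent (by the usual factor-and-retract argument) to $\cof(\I)\cap\W=\an$, i.e.\ to $\J$ being a set of generating \emph{trivial cofibrations} and to the fibrations of the resulting model structure coinciding globally with $\nfib$. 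The theorem only claims this between fibrant objects, and the proof of \cref{thm:main} deliberately establishes just the one inclusion $\fib\subseteq\nfib$: in the intended examples the anodyne extensions are a \emph{proper} subclass of the trivial cofibrations (this is the whole point of the Cisinski-style terminology; compare the Joyal model structure, where inner horns plus the interval inclusion do not generate the trivial cofibrations). Concretely, your retract argument breaks at the point where you need a retraction $r\colon P\to X$ with $r\phi=\id_X$ and $fr=p$: producing $r$ requires lifting $\phi$ against $f\in\nfib$, hence requires $\phi\in\an$, but $\phi$ is a comparison map into a pullback of the anodyne extension $\iota_Y$ along $f'$, and anodyne extensions are not stable under pullback. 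Attempting to repair this by factoring $\phi$ as an anodyne followed by a naive fibration only reproduces the same problem one stage up.

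The way around this — and the route the paper takes — is to use Smith's theorem in the Beke--Barwick formulation, where no set of generating trivial cofibrations is posited: the fibrations are \emph{defined} as $(\cof(\I)\cap\W)^{\boxslash}$, and the only closure condition to verify is that $\cof(\I)\cap\W$ is stable under pushout and transfinite composition. This is obtained not from an identification of $\cof(\I)\cap\W$ with $\an$, but from the Joyal--Cisinski--Stanculescu characterization (\cref{trivcoflift}): a morphism of $\cof(\I)$ lies in $\W$ if and only if it has the left lifting property against naive fibrations \emph{between naive fibrant objects}. Being defined by a left lifting property, this class is automatically closed under pushout and transfinite composition, and the required input is exactly the fibrant-object-level hypotheses (4) and (5) together with \cref{lemma:wenfib}. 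The identifications of fibrant objects, and of weak equivalences and fibrations between them, are then extracted at the end from $\fib\subseteq\nfib$ and \cref{trivcoflift,lemma:wenfib}, without ever claiming $\fib=\nfib$ globally.
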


\begin{rem}
Recall that in a model structure whose class of cofibrations is given by $\cof(\I)$, the class of weak equivalences $\cW$ must satisfy 2-out-of-3, contain the trivial fibrations $\inj(\I)$, and be such that the trivial cofibrations $\cof(\I)\cap\cW$ are stable under pushout and transfinite composition. One can then verify that, under the hypotheses of \cref{thm:main}, the class $\cW$ of \cref{def:we} is the smallest class of morphisms of $\cC$ that both satisfies these requirements and contains all anodyne extensions. Since we want anodyne extensions to be trivial cofibrations, our choice of weak equivalences is optimal in this sense. Similar minimality results are studied in \cite{rosicky2003left}, \cite{HenryMinimal}.
\end{rem}

\begin{rem}
The above theorem can be compared to a result by Stanculescu \cite[Proposition 1.3]{stanc}, which also describes a method for constructing a model structure from a prescribed class of fibrant objects. The crucial difference between the two results lies in the requirements regarding the weak equivalences. Stanculescu assumes the existence of a  well-behaved class of weak equivalences. In contrast, we only require the user to provide a well-behaved class of weak equivalences between fibrant objects, and the bulk of the technical work of this paper is devoted to showing that these properties can be extended to a larger class of weak equivalences. We foresee that, in practice, this will make \cref{thm:main} friendlier to a user who only has control over the morphisms between fibrant objects; in \cref{section:newexamples} we direct the reader to several examples of this scenario.
\end{rem}

In order to prove \cref{thm:main}, we will rely on the following result. Attributed to Smith, this was first published in \cite[Theorem 1.7]{beke}, although we use the reformulation of \cite[Proposition 2.2]{barwick}.

\begin{theorem}[Smith]
Let $\C$ be a locally presentable category, $\W$ be a class of morphisms in $\C$, and $\I$ be a set of morphisms in $\C$. Write $\fib\coloneqq (\cof(\I)\cap\W)^{\boxslash}$. Suppose that the following assumptions hold: 
\begin{enumerate}[label=(\Roman*)]
    \item \label{Smith:trivfib} $\inj(\I)\subseteq \W$,
    \item \label{Smith:2of3} $\W$ satisfies the $2$-out-of-$3$ property,
    \item \label{Smith:accessibility} $\W$ considered as a full subcategory of $\C^{\mathbbm{2}}$ is accessible, 
    \item \label{Smith:trivcof} $\cof(\I)\cap\W$ is closed under pushout and transfinite composition.
\end{enumerate}
Then, there exists a combinatorial model structure on $\C$ whose classes of cofibrations, fibrations, and weak equivalences are given by $(\cof(\I),\fib,\W)$.
\end{theorem}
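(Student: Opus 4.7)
The plan is to verify Quillen's model category axioms for the triple $(\cof(\I),\fib,\W)$. The existence of all small limits and colimits follows from local presentability of $\cC$; the 2-out-of-3 property for $\W$ is hypothesis \ref{Smith:2of3}; and closure under retracts holds for $\cof(\I)$ and $\fib$ (which are defined as lifting classes) and for $\W$ by the accessibility hypothesis \ref{Smith:accessibility} combined with 2-out-of-3. The heart of the proof is to construct the two weak factorization systems: the pair $(\cof(\I),\inj(\I))$ exists by the small object argument since $\I$ is a set in the locally presentable category $\cC$, while the second pair $(\cof(\I)\cap\W,\fib)$ requires showing that $\cof(\I)\cap\W$ is cofibrantly generated by a set $\J$.

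First I would run a solution-set argument to produce $\J$. Combining hypothesis \ref{Smith:accessibility} with the local presentability of $\cC$ and the fact that $\I$ is a set, one obtains a regular cardinal $\kappa$ such that every object of $\W\subseteq\cC^{\mathbbm{2}}$ is a $\kappa$-filtered colimit of $\kappa$-presentable morphisms lying in $\W$, the domains and codomains of morphisms in $\I$ are $\kappa$-presentable, and pushouts and $\kappa$-filtered colimits of $\I$-cell inclusions preserve $\kappa$-presentability. Take $\J$ to be a set of representatives for the isomorphism classes of morphisms in $\cof(\I)\cap\W$ whose domain and codomain are $\kappa$-presentable. The inclusion $\cof(\J)\subseteq\cof(\I)\cap\W$ is then immediate from hypothesis \ref{Smith:trivcof}.

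The main obstacle is the reverse inclusion $\cof(\I)\cap\W\subseteq\cof(\J)$. Given $f\in\cof(\I)\cap\W$, I apply the small object argument with $\J$ to factor $f=p\iota$ with $\iota\in\cof(\J)$ and $p\in\J^{\boxslash}$; by 2-out-of-3 $p\in\W$, and the delicate step is to show $p\in\inj(\I)$. This is done via a $\kappa$-presentation argument: a putative lifting problem of $p$ against a morphism of $\I$ would produce, by writing the relevant square as a $\kappa$-filtered colimit of $\kappa$-presentable squares, a factorization of $p$ through some morphism of $\cof(\I)\cap\W$ with $\kappa$-presentable endpoints, hence through a morphism of $\J$, against which $p$ lifts by construction. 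The usual retract argument then places $f\in\cof(\J)$, and the small object argument with $\J$ supplies the (trivial cofibration, fibration) factorizations.

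It remains to identify $\fib\cap\W$ with $\inj(\I)$. The inclusion $\supseteq$ is immediate from hypothesis \ref{Smith:trivfib} together with $\cof(\I)\cap\W\subseteq\cof(\I)$. For the reverse, factor $f\in\fib\cap\W$ as $f=qj$ with $j\in\cof(\I)$ and $q\in\inj(\I)$; by 2-out-of-3, $j\in\cof(\I)\cap\W$, so $f$ lifts against $j$, and $f$ is a retract of $q\in\inj(\I)$. The technically delicate portion is, as anticipated, the solution-set argument producing $\J$ and the verification that the right factor $p$ from the small object argument lies in $\inj(\I)$.
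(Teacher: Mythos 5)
The paper does not prove this statement at all: it is quoted verbatim as Smith's theorem, with the proof deferred to Beke \cite[Theorem 1.7]{beke} and Barwick \cite[Proposition 2.2]{barwick}. So the only meaningful comparison is with the standard proof in those references, and your outline does follow its architecture: reduce everything to exhibiting a generating set $\J$ for $\cof(\I)\cap\W$, take $\J$ to be the morphisms of $\cof(\I)\cap\W$ with $\kappa$-presentable (co)domain, get $\cof(\J)\subseteq\cof(\I)\cap\W$ from hypothesis (IV) (plus retract-closure of $\W$, which does follow from accessibility via idempotent-completeness, not really from ``accessibility combined with 2-out-of-3'' as you phrase it), and obtain the reverse inclusion and the trivial-fibration identification by the retract argument.

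The one place where your argument as written does not go through is exactly the step you flag as delicate, namely showing that $p\in\inj(\J)\cap\W$ implies $p\in\inj(\I)$. Writing a lifting square from $i\in\I$ to $p$ as a $\kappa$-filtered colimit of $\kappa$-presentable squares, using accessibility of $\W$, only produces a factorization through a $\kappa$-presentable morphism \emph{lying in $\W$}; there is no reason for that morphism to lie in $\cof(\I)$, so it need not belong to $\J$, and the lifting property of $p$ against it is not available. The actual content of Beke's Lemma 1.9 (equivalently Lurie's \cite[Lemma A.2.6.12]{HTT}) is a transfinite interleaving: starting from the $\kappa$-presentable $\W$-approximation, one alternately attaches $\I$-cells (to push the arrow into $\cof(\I)$, using that pushouts of $\I$ preserve $\kappa$-presentability) and re-approximates inside $\W$ (using accessibility again), and passes to a $\kappa$-filtered colimit where both properties hold simultaneously; hypothesis (IV) and 2-out-of-3 are what guarantee the limiting arrow is still in $\cof(\I)\cap\W$. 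Without supplying this zig-zag, the solution-set step --- and hence the entire construction of the second weak factorization system --- is asserted rather than proved. The remainder of your outline (the small object argument for $(\cof(\I),\inj(\I))$, the retract argument identifying $\fib\cap\W$ with $\inj(\I)$, and the deduction of the lifting and retract axioms) is correct and standard.
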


Let us now study each of the requirements of Smith's theorem. First, note that condition \ref{Smith:trivfib} in Smith's theorem and condition \ref{ax:trivfib} in \cref{thm:main} are identical.  

\subsection{2-out-of-3 property for \texorpdfstring{$\W$}{W}}

Proving that our class of weak equivalences $\W$ given in \cref{def:we} satisfies 2-out-of-3 requires more work, and we build up to this result through several technical lemmas.

\begin{lem}\label{lemma:prelim2out3}
Given a commutative triangle in $\C$
\begin{tz}
\node[](1) {$X$}; 
\node[right of=1,xshift=.2cm](2) {$Y$}; 
\node[below of=2](2') {$Z$}; 

\draw[->] (1) to node[above,la]{$f$} (2); 
\draw[->] (2) to node[right,la]{$g$} (2'); 
\draw[->] (1) to node[below,la,pos=0.4,xshift=-2pt]{$h$} (2');
\end{tz}
where $f$, $h$ are anodyne extensions, and $Y$, $Z$ are naive fibrant objects, we have that the morphism $g$ is in $\Wf$.
\end{lem}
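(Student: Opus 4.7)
The plan is to construct an ``inverse up to $\Wf$'' for $g$ using the anodyne $h$, and then deduce $g \in \Wf$ via 2-out-of-6. Since $h \in \an$ and $Y$ is naive fibrant, the lifting property yields $\ell \colon Z \to Y$ with $\ell h = f$. The equalities $\ell g \cdot f = \ell h = f$ and $g \ell \cdot h = g f = h$ then say that $\ell g$ agrees with $\id_Y$ on $f$, and $g \ell$ agrees with $\id_Z$ on $h$.

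To promote these strict identities into relations in $\Wf$, we use path objects built from the factorization system. Factor the diagonal $\Delta_Y \colon Y \to Y \times Y$ as $Y \xrightarrow{s} PY \xrightarrow{(d_0, d_1)} Y \times Y$ with $s \in \an$ and $(d_0, d_1) \in \nfib$. Since $\nfib$ is the right class of a weak factorization system, products of naive fibrant objects are again naive fibrant, and so $PY$ is naive fibrant. Hence $s$ is an anodyne between naive fibrant objects, so $s \in \Wf$ by~\ref{anwe}; since $d_i s = \id_Y$ for $i = 0, 1$, the 2-out-of-3 property (implied by~\ref{2of6Wf}) gives $d_0, d_1 \in \Wf$. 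Next, since the compositions $(d_0, d_1) \circ s f$ and $(\ell g, \id_Y) \circ f$ both equal $(f, f) \colon X \to Y \times Y$, the square
\begin{tz}
\node[](1) {$X$};
\node[right of=1,xshift=.5cm](2) {$PY$};
\node[below of=1](3) {$Y$};
\node[below of=2](4) {$Y\times Y$};
\draw[->] (1) to node[above,la]{$sf$} (2);
\draw[->] (1) to node[left,la]{$f$} (3);
\draw[->] (2) to node[right,la]{$(d_0,d_1)$} (4);
\draw[->] (3) to node[below,la]{$(\ell g,\id_Y)$} (4);
\end{tz}
commutes, and admits a diagonal $H \colon Y \to PY$ with $d_0 H = \ell g$ and $d_1 H = \id_Y$, since $f \in \an$ and $(d_0, d_1) \in \nfib$. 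From $d_1 H = \id_Y$ and $d_1 \in \Wf$, 2-out-of-3 yields $H \in \Wf$, and then $\ell g = d_0 H \in \Wf$ by composition. A symmetric argument, using a path object for $Z$ together with $g \ell h = h$, shows $g \ell \in \Wf$.

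Finally, we apply 2-out-of-6 from~\ref{2of6Wf} to the sequence $Y \xrightarrow{g} Z \xrightarrow{\ell} Y \xrightarrow{g} Z$: the two consecutive 2-term composites $\ell g$ and $g \ell$ lie in $\Wf$, so all three morphisms, in particular $g$, lie in $\Wf$. The main conceptual obstacle is to recognize that, in the absence of a priori homotopies, a path object construction obtained from the weak factorization system can be used to convert the strict identities $\ell g f = f$ and $g \ell h = h$ into relations in $\Wf$; once this is set up, the remaining verifications reduce to routine applications of~\ref{anwe} and 2-out-of-3.
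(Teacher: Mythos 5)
Your proof is correct, but it follows a genuinely different route from the paper's. The paper forms the pushout $P = Y\sqcup_X Z$ of $f$ along $h$, observes that the induced map $k\colon P\to Z$ satisfies $kf'=\id_Z$ and $kh'=g$, factors $k$ as an anodyne extension $i$ followed by a naive fibration $p$ (so that $Q$ is naive fibrant), applies condition \ref{anwe} to the anodyne extensions $if'$ and $ih'$ between naive fibrant objects, and concludes with two applications of $2$-out-of-$3$. You instead lift $f$ through the anodyne extension $h$ to obtain $\ell\colon Z\to Y$ with $\ell h=f$, then use path objects manufactured from the $(\an,\nfib)$ factorization (with \ref{anwe} supplying $s\in\Wf$) to upgrade the strict equations $\ell g f=f$ and $g\ell h=h$ into $\ell g,\,g\ell\in\Wf$, and finish with $2$-out-of-$6$ applied to $Y\xrightarrow{g}Z\xrightarrow{\ell}Y\xrightarrow{g}Z$. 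All the intermediate morphisms you place in $\Wf$ do lie between naive fibrant objects, the lifts exist for the reasons you give, and the fact that $\Wf$ contains isomorphisms (assumed in the setup) justifies the $2$-out-of-$3$ step $d_is=\id_Y\Rightarrow d_i\in\Wf$, so there is no gap. The trade-off: the paper's argument is shorter and needs only $2$-out-of-$3$ for $\Wf$, while yours requires the full strength of $2$-out-of-$6$ and a two-sided homotopy-inverse construction; on the other hand, your path-object technique is exactly the one the paper deploys later in the proof of \cref{pathobj} (the implication \ref{Path}$\Rightarrow$\ref{anwe}), so your argument makes that connection visible earlier and gives the conceptually transparent picture of $g$ as a morphism admitting a homotopy inverse $\ell$.
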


\begin{proof}
Consider the pushout of $f$ along $h$, together with the unique induced morphism $k\colon P\to Z$ as depicted below.
\begin{tz}
\node[](1) {$X$}; 
\node[right of=1](2) {$Y$}; 
\node[below of=1](1') {$Z$}; 
\node[below of=2](2') {$P$}; 

\draw[->] (1) to node[above,la]{$f$} (2); 
\draw[->] (1) to node[left,la]{$h$} (1'); 
\draw[->] (2) to node[right,la]{$h'$} (2'); 
\draw[->] (1') to node[below,la]{$f'$} (2');

\node[below right of=2',xshift=.5cm](5) {$Z$};
\draw[->,bend left] (2) to node[right,la]{$g$} (5); 
\draw[->,bend right] (1') to node[below,la]{$\id_Z$} (5); 
\draw[->,dashed] (2') to node[above,la,pos=0.4,xshift=2pt]{$k$} (5);
\node at ($(2')-(.3cm,-.3cm)$) {$\ulcorner$};
\end{tz}
We can factor the morphism $k$ as $P \xrightarrow{i} Q \xrightarrow{p} Z$ with $i$ an anodyne extension and $p$ a naive fibration. Note that since $Z$ is naive fibrant, so is $Q$.

Now, since $\an$ is the left class in a factorization system, it is stable under pushouts, which guarantees that $f'\in\an$. Then $i f' \colon Z \to Q$ is an anodyne extension between naive fibrant objects, and hence by condition \ref{anwe} we have that $i f'\in\Wf$. By a similar argument, we see that $i  h'\in\Wf$. 

As $\id_Z=kf'=pif'$, it follows from 2-out-of-3 for $\Wf$ that $p\in \Wf$, and then that $g=k  h'=pih'\in \Wf$.
\end{proof}

\begin{lem} \label{lem:univfibrepl}
Let $f\colon X\to Y$ be a morphism in $\C$. Then, there exists a weakly initial naive fibrant replacement as depicted below left; namely, for any other naive fibrant replacement as depicted below right,
\begin{tz}
\node[](1) {$X$}; 
\node[right of=1](2) {$Y$}; 
\node[below of=1](1') {$\hat{X}$}; 
\node[below of=2](2') {$\hat{Y}$}; 

\draw[->] (1) to node[above,la]{$f$} (2); 
\draw[->] (1) to node[left,la]{$u_X$} (1'); 
\draw[->] (2) to node[right,la]{$u_Y$} (2'); 
\draw[->] (1') to node[below,la]{$\hat{f}$} (2');

\node[right of=2,xshift=2cm](1) {$X$}; 
\node[right of=1](2) {$Y$}; 
\node[below of=1](1') {$\overline{X}$}; 
\node[below of=2](2') {$\overline{Y}$}; 

\draw[->] (1) to node[above,la]{$f$} (2); 
\draw[->] (1) to node[left,la]{$j_X$} (1'); 
\draw[->] (2) to node[right,la]{$j_Y$} (2'); 
\draw[->] (1') to node[below,la]{$\overline{f}$} (2');
\end{tz}
there exist morphisms $r\colon\hat{X}\to \overline{X}$ and $s\colon\hat{Y}\to \overline{Y}$ making the following diagram commute. 
\begin{tz}[node distance=2cm]
\node[](1) {$X$}; 
\node[right of=1](2) {$Y$}; 
\node[below of=1](1') {$\hat{X}$}; 
\node[below of=2](2') {$\hat{Y}$}; 

\draw[->] (1) to node[above,la]{$f$} (2); 
\draw[->] (1) to node[left,la]{$u_X$} (1'); 
\draw[->] (2) to node[left,la]{$u_Y$} (2'); 
\draw[->] (1') to node[above,la,pos=0.28]{$\hat{f}$} (2');

\node[below right of=1',xshift=.7cm,yshift=.4cm](1'') {$\overline{X}$}; 
\node[below right of=2',xshift=.7cm,yshift=.4cm](2'') {$\overline{Y}$}; 
\draw[->] (1'') to node[below,la]{$\overline{f}$} (2'');
\draw[->,w] (1) to node[right,la,pos=0.3,yshift=2pt]{$j_X$} (1''); 
\draw[->] (1') to node[left,la,xshift=-10pt,pos=0.6]{$r$} (1''); 
\draw[->] (2) to node[right,la,pos=0.3,yshift=2pt,xshift=2pt]{$j_Y$} (2''); 
\draw[->] (2') to node[left,la,xshift=-10pt,pos=0.6]{$s$} (2''); 
\end{tz}
Moreover, we have that $\hat{f}\in \Wf$ if and only if $\overline{f}\in \Wf$. 
\end{lem}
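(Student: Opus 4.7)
The plan is to explicitly construct a specific naive fibrant replacement of $f$ that is projectively anodyne (in the arrow category sense), and then use lifting against the arbitrary naive fibrant replacement to produce $r$ and $s$. The main obstacle is that the comparison morphism $r$ must be chosen \emph{compatibly} with $s$ so that the bottom square $\overline{f}r = s\hat{f}$ commutes; choosing $r$ and $s$ independently via the lifting property of $u_X$, $u_Y$ against $\overline{X}, \overline{Y}$ does not automatically give this compatibility.

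\textbf{Construction of the weakly initial replacement.} First, use the factorization system $(\an,\nfib)$ to factor $X\to 1$ as an anodyne extension $u_X\colon X\to \hat{X}$ followed by a naive fibration. Then form the pushout $P \coloneqq \hat{X}\sqcup_X Y$, with structural maps $i\colon \hat{X}\to P$ and $j\colon Y\to P$; observe that $j$ is a pushout of the anodyne extension $u_X$ and is therefore anodyne. Finally, factor $P\to 1$ as an anodyne extension $a\colon P\to \hat{Y}$ followed by a naive fibration, and set $\hat{f} \coloneqq a\circ i$ and $u_Y \coloneqq a\circ j$. Then $u_Y$ is a composite of anodyne extensions, hence anodyne, and both $\hat{X},\hat{Y}$ are naive fibrant, so this defines a naive fibrant replacement of $f$.

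\textbf{Weak initiality.} Given another naive fibrant replacement $(\overline{X},\overline{Y},\overline{f},j_X,j_Y)$ as in the statement, I first lift $j_X\colon X\to \overline{X}$ along the anodyne extension $u_X$ against the naive fibration $\overline{X}\to 1$ to obtain $r\colon \hat{X}\to \overline{X}$ with $ru_X = j_X$. The key step is then to produce $s$ using the pushout: the maps $j_Y\colon Y\to \overline{Y}$ and $\overline{f}r\colon \hat{X}\to \overline{Y}$ agree on $X$ since $j_Yf = \overline{f}j_X = \overline{f}ru_X$, so they induce a unique map $P\to \overline{Y}$. Now lift this map along the anodyne extension $a\colon P\to \hat{Y}$ against the naive fibration $\overline{Y}\to 1$ to obtain $s\colon \hat{Y}\to \overline{Y}$. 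Unwinding the definitions gives $su_Y = saj = j_Y$ and $s\hat{f} = sai = \overline{f}r$, so both squares commute. This resolves the compatibility obstacle by building $s$ out of a single lifting problem that encodes both conditions simultaneously.

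\textbf{Equivalence in $\Wf$.} For the final assertion, I observe that the triangle $X\xrightarrow{u_X}\hat{X}\xrightarrow{r}\overline{X}$ satisfies the hypotheses of \cref{lemma:prelim2out3}: both $u_X$ and $ru_X = j_X$ are anodyne extensions, and $\hat{X},\overline{X}$ are naive fibrant, so $r\in \Wf$. The same argument applied to the triangle $Y\xrightarrow{u_Y}\hat{Y}\xrightarrow{s}\overline{Y}$ shows $s\in \Wf$. The relation $\overline{f}r = s\hat{f}$ together with $r,s\in\Wf$ and the $2$-out-of-$3$ property for $\Wf$ (which follows from the $2$-out-of-$6$ hypothesis \ref{2of6Wf} together with the assumption that $\Wf$ contains the identities) then immediately gives $\hat{f}\in \Wf\iff \overline{f}\in \Wf$, as desired.
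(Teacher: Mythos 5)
Your proof is correct and follows essentially the same route as the paper: the same pushout construction of the weakly initial replacement, the same use of the pushout's universal property to produce a single lifting problem for $s$ that forces both commutativities at once, and the same appeal to \cref{lemma:prelim2out3} plus $2$-out-of-$3$ for the final equivalence. No gaps.
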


\begin{proof}
To construct the weakly initial naive fibrant replacement, we consider a naive fibrant replacement $u_X\colon X\to \hat{X}$ and take the pushout  depicted below left.
\begin{tz}
\node[](1) {$X$}; 
\node[right of=1,xshift=.3cm](2) {$Y$}; 
\node[below of=1](1') {$\hat{X}$}; 
\node[below of=2](2') {$\hat{X}\sqcup_X Y$}; 
\node at ($(2')-(.4cm,-.4cm)$) {$\ulcorner$};

\draw[->] (1) to node[above,la]{$f$} (2); 
\draw[->] (1) to node[left,la]{$u_X$} (1'); 
\draw[->] (2) to node[right,la]{$i$} (2'); 
\draw[->] (1') to node[below,la]{$k$} (2');

\node[right of=2,xshift=2cm](1) {$X$}; 
\node[right of=1,xshift=.3cm](2) {$Y$}; 
\node[below of=1](1') {$\hat{X}$}; 
\node[below of=2](2') {$\hat{Y}$}; 

\draw[->] (1) to node[above,la]{$f$} (2); 
\draw[->] (1) to node[left,la]{$u_X$} (1'); 
\draw[->] (2) to node[right,la]{$u_Y\coloneqq ji$} (2'); 
\draw[->] (1') to node[below,la]{$\hat{f}\coloneqq jk$} (2');
\end{tz}
Note that $i\in\an$ since $\an$ is closed under pushout, but $\hat{X}\sqcup_X Y$ may not be naive fibrant. To correct this, let $j\colon \hat{X}\sqcup_X Y\to \hat{Y}$ be a fibrant replacement, and define the weakly initial naive fibrant replacement to be given by the diagram above right.

We now construct the morphisms $r$ and $s$. Since $u_X\in\an$ and $\overline{X}$ is naive fibrant, we have a lift $r$ in the following diagram. 
\begin{tz}
\node[](1) {$X$}; 
\node[right of=1](2) {$\overline{X}$}; 
\node[below of=1](1') {$\hat{X}$}; 

\draw[->] (1) to node[above,la]{$j_X$} (2); 
\draw[->] (1) to node[left,la]{$u_X$} (1');
\draw[->,dashed] (1') to node[right,la,pos=0.4,yshift=-2pt]{$r$} (2);
\end{tz}
Next, by the universal property of the pushout, we know that there exists a morphism $\ell$ making the diagram below left commute.
\begin{tz}
\node[](1) {$X$}; 
\node[right of=1,xshift=.5cm](2) {$Y$}; 
\node[below of=1](1') {$\hat{X}$}; 
\node[below of=2](2') {$\hat{X}\sqcup_X Y$}; 

\draw[->] (1) to node[above,la]{$f$} (2); 
\draw[->] (1) to node[left,la]{$u_X$} (1'); 
\draw[->] (2) to node[right,la]{$i$} (2'); 
\draw[->] (1') to node[above,la]{$k$} (2');

\node[below right of=1',xshift=.5cm](4) {$\overline{X}$};
\node[below right of=2',xshift=.5cm](5) {$\overline{Y}$};
\draw[->,bend left] (2) to node[right,la]{$j_Y$} (5); 
\draw[->] (1') to node[below,la,pos=0.4,xshift=-2pt]{$r$} (4); 
\draw[->] (4) to node[below,la]{$\overline{f}$} (5);
\draw[->,dashed] (2') to node[above,la,pos=0.4,xshift=2pt]{$\ell$} (5);
\node at ($(2')-(.3cm,-.3cm)$) {$\ulcorner$};

\node[right of=2,xshift=3.5cm](1) {$\hat{X}\sqcup_X Y$}; 
\node[right of=1,xshift=.5cm](2) {$\hat{Y}$}; 
\node[below of=1](1') {$\overline{Y}$}; 

\draw[->] (1) to node[above,la]{$\ell$} (2); 
\draw[->] (1) to node[left,la]{$j$} (1');
\draw[->,dashed] (1') to node[right,la,pos=0.4,yshift=-2pt]{$s$} (2);
\end{tz}
Recalling that $j\colon \hat{X}\sqcup_X Y\to \hat{Y}$ is in $\an$ and that $\hat{Y}$ is naive fibrant, we get a lift $s$ in the diagram above right. It then follows that $r$ and $s$ satisfies the desired relations: we have that \[ su_Y=sji=\ell i=j_Y \quad \text{and} \quad s\hat{f}=sjk=\ell k=\overline{f}r. \]

Finally, since the morphisms $u_X$, $u_Y$, $j_X$, $j_Y$ are anodyne extensions, we get by \cref{lemma:prelim2out3} that $r, s\in \Wf$. Therefore, as $s\hat{f}=\overline{f}r$, by $2$-out-of-$3$ for $\Wf$ we conclude that $\hat{f}\in \Wf$ if and only if $\overline{f}\in \Wf$. 
\end{proof}

We now use the previous lemma to show that the property of a morphism being a weak equivalence is independent of the choice of naive fibrant replacement.

\begin{lem}\label{lemma:wefr}
Let $f \colon X \to Y$ be a morphism in $\W$. Then any naive fibrant replacement 
 $\overline{f} \colon \overline{X} \to \overline{Y}$ of $f$ is in $\Wf$.
\end{lem}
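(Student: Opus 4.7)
The plan is to leverage \cref{lem:univfibrepl} as a bridge between any two naive fibrant replacements of $f$. Recall that \cref{lem:univfibrepl} produces a weakly initial naive fibrant replacement $\hat{f}\colon\hat{X}\to\hat{Y}$ of $f$, and moreover asserts that for \emph{any} other naive fibrant replacement $\overline{f}$ of $f$, one has $\hat{f}\in\Wf$ if and only if $\overline{f}\in\Wf$. Thus it suffices to show that $\hat{f}\in\Wf$, since then the final claim of that lemma, applied to the given naive fibrant replacement $\overline{f}\colon\overline{X}\to\overline{Y}$, yields $\overline{f}\in\Wf$.

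First I would unpack the hypothesis: since $f\in\W$, by \cref{def:we} there exists some naive fibrant replacement
\begin{tz}
\node[](1) {$X$};
\node[right of=1](2) {$Y$};
\node[below of=1](1') {$X'$};
\node[below of=2](2') {$Y'$};
\draw[->] (1) to node[above,la]{$f$} (2);
\draw[->] (1) to node[left,la]{$\iota_X$} (1');
\draw[->] (2) to node[right,la]{$\iota_Y$} (2');
\draw[->] (1') to node[below,la]{$f'$} (2');
\end{tz}
of $f$ with $f'\in\Wf$. Then I would apply \cref{lem:univfibrepl} to compare the weakly initial naive fibrant replacement $\hat{f}$ against this particular replacement $f'$: the last sentence of the lemma gives $\hat{f}\in\Wf$ if and only if $f'\in\Wf$, so we conclude $\hat{f}\in\Wf$.

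Finally, applying \cref{lem:univfibrepl} again, this time to compare the weakly initial $\hat{f}$ against the given naive fibrant replacement $\overline{f}$ in the hypothesis of the lemma, we obtain that $\overline{f}\in\Wf$ if and only if $\hat{f}\in\Wf$, which is what we have just established. This completes the argument.

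There is no real obstacle here beyond correctly invoking \cref{lem:univfibrepl} twice: once to promote the given witness $f'\in\Wf$ to membership of the weakly initial replacement $\hat{f}$ in $\Wf$, and once to transfer that membership to the arbitrary replacement $\overline{f}$. All the hard work (pushout construction, $2$-out-of-$3$ for $\Wf$, and \cref{lemma:prelim2out3}) has already been absorbed into \cref{lem:univfibrepl}, so this lemma is essentially an immediate corollary.
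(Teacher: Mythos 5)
Your proposal is correct and follows exactly the paper's own argument: use the witness $f'\in\Wf$ from the definition of $\W$ together with \cref{lem:univfibrepl} to conclude that the weakly initial replacement $\hat{f}$ is in $\Wf$, and then apply \cref{lem:univfibrepl} once more to transfer this to the arbitrary replacement $\overline{f}$. No issues.
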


\begin{proof}
Since $f\in\W$, there exists a naive fibrant replacement $f'\colon X'\to Y'$ of $f$ with $f'\in\Wf$. By \cref{lem:univfibrepl}, we have a weakly initial fibrant replacement $\hat{f}$, which is in $\Wf$ as $f'$ is. Hence, \cref{lem:univfibrepl} implies that $\overline{f}$ is also in $\Wf$. 
\end{proof}

If we specialize the above result to the functorial naive fibrant replacement $(-)^\fibrant$ of \cref{rem:naivefibrep}, we get the following. 

\begin{cor}\label{wefromfib}
A morphism $f$ is in $\W$ if and only if $f^\fibrant$ is in $\Wf$.
\end{cor}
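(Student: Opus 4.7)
The plan is to observe that this corollary is essentially an immediate specialization of \cref{lemma:wefr} to the particular naive fibrant replacement provided by the accessible functor $(-)^\fibrant$ from \cref{rem:naivefibrep}. Indeed, applied to a morphism $f\colon X\to Y$, this functor produces a commutative square
\begin{tz}
\node[](1) {$X$};
\node[right of=1](2) {$Y$};
\node[below of=1](1') {$X^\fibrant$};
\node[below of=2](2') {$Y^\fibrant$};
\draw[->] (1) to node[above,la]{$f$} (2);
\draw[->] (1) to node[left,la]{$\iota_X$} (1');
\draw[->] (2) to node[right,la]{$\iota_Y$} (2');
\draw[->] (1') to node[below,la]{$f^\fibrant$} (2');
\end{tz}
in which $X^\fibrant$ and $Y^\fibrant$ are naive fibrant and $\iota_X$, $\iota_Y$ are anodyne extensions, so this square is indeed a naive fibrant replacement of $f$ in the sense of the definition preceding \cref{def:we}.

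For the forward implication, if $f\in\W$, then \cref{lemma:wefr} tells us that \emph{every} naive fibrant replacement of $f$ lies in $\Wf$; applying this to the replacement above yields $f^\fibrant\in\Wf$. For the reverse implication, if $f^\fibrant\in\Wf$, then the square above exhibits a naive fibrant replacement of $f$ whose bottom map lies in $\Wf$, which is exactly the condition required by \cref{def:we} for $f$ to belong to $\W$.

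There is no real obstacle here: all the hard work has already been done in \cref{lem:univfibrepl} and \cref{lemma:wefr}, which ensure that membership in $\W$ is independent of the choice of naive fibrant replacement. The only thing to verify is the (routine) fact that the functorial replacement $(-)^\fibrant$ of \cref{rem:naivefibrep} does indeed fit the definition of a naive fibrant replacement of a morphism, which is immediate from its construction as a natural transformation with anodyne extension components landing in naive fibrant objects.
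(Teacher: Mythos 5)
Your proof is correct and matches the paper's argument: the corollary is obtained precisely by specializing \cref{lemma:wefr} to the functorial naive fibrant replacement $(-)^\fibrant$ of \cref{rem:naivefibrep} for the forward direction, with the converse being immediate from \cref{def:we}. The paper states this in one line, and your write-up simply spells out the same reasoning.
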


We are now equipped to prove condition \ref{Smith:2of3} in Smith's theorem: $\W$ satisfies the 2-out-of-3 property. In fact, we can prove something stronger.\footnote{While this is a stronger property, it is nevertheless expected, as the class of weak equivalences in any model structure will satisfy 2-out-of-6.}

\begin{prop}\label{2of3}
The class $\W$ satisfies the 2-out-of-6 property.
\end{prop}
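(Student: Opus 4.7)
The plan is to reduce 2-out-of-6 for $\W$ to 2-out-of-6 for $\Wf$ by means of the functorial naive fibrant replacement $(-)^{\fibrant}\colon\C\to\C$ provided by \cref{rem:naivefibrep}, combined with the characterization of $\W$ in terms of this functor given in \cref{wefromfib}.

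Concretely, suppose that $f\colon X\to Y$, $g\colon Y\to Z$, and $h\colon Z\to T$ are three composable morphisms in $\C$ such that $gf$ and $hg$ are in $\W$. First I would apply the functor $(-)^{\fibrant}$ to obtain a composable triple
\begin{tz}
\node[](1) {$X^{\fibrant}$};
\node[right of=1](2) {$Y^{\fibrant}$};
\node[right of=2](3) {$Z^{\fibrant}$};
\node[right of=3](4) {$T^{\fibrant}$};
\draw[->] (1) to node[above,la]{$f^{\fibrant}$} (2);
\draw[->] (2) to node[above,la]{$g^{\fibrant}$} (3);
\draw[->] (3) to node[above,la]{$h^{\fibrant}$} (4);
\end{tz}
of morphisms between naive fibrant objects, with the property that $(gf)^{\fibrant}=g^{\fibrant}f^{\fibrant}$ and $(hg)^{\fibrant}=h^{\fibrant}g^{\fibrant}$ by functoriality.

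Next, since $gf,hg\in\W$, \cref{wefromfib} gives that $g^{\fibrant}f^{\fibrant}$ and $h^{\fibrant}g^{\fibrant}$ lie in $\Wf$. Hypothesis \ref{2of6Wf} of \cref{thm:main} says that $\Wf$ enjoys 2-out-of-6, and so we deduce that $f^{\fibrant}$, $g^{\fibrant}$, $h^{\fibrant}$, and their composite $h^{\fibrant}g^{\fibrant}f^{\fibrant}$ all belong to $\Wf$. Using functoriality of $(-)^{\fibrant}$ once more to identify $h^{\fibrant}g^{\fibrant}f^{\fibrant}=(hgf)^{\fibrant}$, one more application of \cref{wefromfib} in the reverse direction shows that $f,g,h$, and $hgf$ all lie in $\W$, which is exactly the 2-out-of-6 property for $\W$.

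I do not anticipate a serious obstacle here, since the hard work has already been absorbed into \cref{lem:univfibrepl}, \cref{lemma:wefr}, and \cref{wefromfib}; the only point requiring care is the use of functoriality of the naive fibrant replacement to ensure both that $(gf)^{\fibrant}$ literally factors as $g^{\fibrant}f^{\fibrant}$ and that the resulting arrows sit between naive fibrant objects, which is precisely what the accessible algebraic weak factorization system from \cref{rem:naivefibrep} provides. In this sense, the 2-out-of-6 property for $\W$ is simply transported from $\Wf$ through the fibrant replacement functor.
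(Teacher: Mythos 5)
Your proof is correct and is essentially the paper's own argument: the paper simply notes that $\W$ is the preimage of $\Wf$ under the functor $(-)^{\fibrant}$ (by \cref{wefromfib}) and that the preimage under a functor of a class satisfying 2-out-of-6 again satisfies 2-out-of-6. You have merely unwound that one-line observation into an explicit verification on a composable triple, so no further comment is needed.
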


\begin{proof}
Since $\W$ is the preimage under the functor $(-)^\fibrant$ of the class $\Wf$ by \cref{wefromfib} and $\Wf$ satisfies $2$-out-of-$6$ by condition \ref{2of6Wf}, we directly get that $\W$ satisfies $2$-out-of-$6$. 
\end{proof}

\subsection{Accessibility of \texorpdfstring{$\W$}{W}} 

The accessibility of our class of weak equivalences is a direct consequence of the assumed accessibility of $\Wf$, together with \cref{wefromfib}, as we now describe. This shows condition \ref{Smith:accessibility} of Smith's theorem. 

\begin{prop}\label{Waccessible}
The class $\W$ is accessible as a full subcategory of $\C^{\mathbbm{2}}$. 
\end{prop}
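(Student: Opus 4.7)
The plan is to exhibit $\W$ as a pullback (in the 2-category of accessible categories and accessible functors) of two accessible categories along accessible functors, and then invoke the standard closure result for accessible categories under such pullbacks.

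First, I would recall from \cref{rem:naivefibrep} that the functorial naive fibrant replacement $(-)^{\fibrant}\colon \C \to \C$ is accessible; inducing on arrow categories, this produces an accessible functor $F\colon \C^{\mathbbm{2}} \to \C^{\mathbbm{2}}$ sending $f$ to $f^{\fibrant}$. Next, by \cref{wefromfib}, a morphism $f\in\C^{\mathbbm{2}}$ lies in $\W$ if and only if $f^{\fibrant}\in\Wf$. Since $f^{\fibrant}$ is by construction a morphism between naive fibrant objects, hypothesis \ref{accessibility} gives that $f^{\fibrant}\in\Wf$ if and only if $f^{\fibrant}\in\overline{\cW}$. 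Thus
\[
\W \;=\; F^{-1}(\overline{\cW}),
\]
where the preimage is taken as a full subcategory of $\C^{\mathbbm{2}}$, or equivalently as the pseudo-pullback of the inclusion $\overline{\cW}\hookrightarrow \C^{\mathbbm{2}}$ along the accessible functor $F$.

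Now I would invoke the fact that accessible categories are closed under pseudo-pullbacks along accessible functors (see, e.g., Adámek–Rosický, \emph{Locally Presentable and Accessible Categories}). Here the inclusion $\overline{\cW}\hookrightarrow \C^{\mathbbm{2}}$ is accessible by hypothesis \ref{accessibility}, the functor $F$ is accessible as observed above, and $\C^{\mathbbm{2}}$ is locally presentable (hence accessible) since $\C$ is. Therefore the pullback $\W$ is accessible as a full subcategory of $\C^{\mathbbm{2}}$.

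The only subtlety I anticipate is making sure that ``full subcategory on the preimage'' truly agrees with the pseudo-pullback in the relevant 2-categorical sense; this is automatic here because $\overline{\cW}\hookrightarrow \C^{\mathbbm{2}}$ is a replete full subcategory, so strict preimage and pseudo-pullback coincide up to equivalence. No further technical work should be required.
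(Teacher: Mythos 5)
Your proof is correct and is essentially identical to the one in the paper: both express $\W$ as the preimage of the accessible subcategory $\overline{\cW}$ under the accessible functor $(-)^{\fibrant}_*\colon \C^{\mathbbm{2}}\to\C^{\mathbbm{2}}$ induced by the naive fibrant replacement, using \cref{wefromfib} and condition \ref{accessibility}. The only cosmetic difference is the reference for the closure result (the paper cites \cite[Corollary A.2.6.5]{HTT} rather than the pseudo-pullback statement in Ad\'amek--Rosick\'y), which amounts to the same fact.
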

\begin{proof}
Recall that, by assumption, the weak factorization system $(\an,\nfib)$ is generated by a set $\J$. \cref{rem:naivefibrep} describes how this implies the existence of a naive fibrant replacement functor $(-)^\fibrant\colon \C\to \C$ which is accessible.

Now, by condition \ref{accessibility}, we have that the class $\overline{\cW}$ seen as a full subcategory of $\C^{\mathbbm{2}}$ is accessible. Then, by \cref{wefromfib} and since $\Wf$ is the restriction of $\overline{\cW}$ to the morphisms between naive fibrant objects, the subcategory $\W$ is the preimage under the accessible functor $(-)^\fibrant_*\colon \C^{\mathbbm{2}}\to \C^{\mathbbm{2}}$ of the accessible subcategory $\overline{\cW}\subseteq \C^{\mathbbm{2}}$, and so we directly get by \cite[Corollary A.2.6.5]{HTT} that $\W\subseteq \C^{\mathbbm{2}}$ is an accessible subcategory. 
\end{proof}

\subsection{Closure properties of \texorpdfstring{$\cof(\I)\cap\W$}{cof(I) cap W}}

It remains to verify the last condition of Smith's theorem; that is, that $\cof(\I)\cap\W$ is closed under pushout and transfinite composition. We will deduce this from a description of the class $\cof(\I)\cap\W$ in terms of their left lifting property against naive fibrations between naive fibrant objects. It was originally observed by Joyal \cite[Lemma E.2.13]{JoyalVolumeII} that such a description holds in any model structure (where one removes the adjective ``naive'' from our formulation below), and then reformulated by Cisinski \cite[Proposition 1.3.36]{cisinski} in his context of anodyne extensions and naive fibrations. We deduce this description in our setting from a result of Stanculescu \cite[Lemma 1.1(2)]{stanc}. In order to show that the hypotheses of this lemma hold in our context, we highlight an easy result, which is of independent interest.

\begin{lem}\label{lemma:wenfib}
Let $f\colon X\to Y$ be a morphism between naive fibrant objects. Then $f\in\Wf$ if and only if $f\in\W$.
\end{lem}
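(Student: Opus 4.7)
The plan is to prove this as a direct consequence of \cref{lemma:wefr}, once we observe that a morphism between naive fibrant objects is tautologically its own naive fibrant replacement.

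First I would note that identities are always anodyne extensions: since $(\an, \nfib)$ is a weak factorization system, the identity $\id_X$ on any object lies in $\an$ (for instance, because identities lift against every morphism, so in particular against every naive fibration). Consequently, when $X$ is naive fibrant, the morphism $\id_X \colon X \to X$ exhibits $X$ itself as a naive fibrant replacement of $X$. Therefore, if $f \colon X \to Y$ is any morphism whose source and target are both naive fibrant, the commutative square
\begin{tz}
\node[](1) {$X$};
\node[right of=1](2) {$Y$};
\node[below of=1](1') {$X$};
\node[below of=2](2') {$Y$};
\draw[->] (1) to node[above,la]{$f$} (2);
\draw[->] (1) to node[left,la]{$\id_X$} (1');
\draw[->] (2) to node[right,la]{$\id_Y$} (2');
\draw[->] (1') to node[below,la]{$f$} (2');
\end{tz}
is a naive fibrant replacement of $f$.

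For the direction $f \in \Wf \Rightarrow f \in \W$: by the observation above, the displayed square is a naive fibrant replacement of $f$ whose bottom row lies in $\Wf$, so $f \in \W$ directly from \cref{def:we}.

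For the direction $f \in \W \Rightarrow f \in \Wf$: by \cref{lemma:wefr}, every naive fibrant replacement of a morphism in $\W$ lies in $\Wf$. Applying this to the naive fibrant replacement of $f$ by itself, we conclude that $f \in \Wf$. There is no real obstacle here; the subtle point is simply to recognize that identities are anodyne extensions, which turns the statement into an immediate corollary of the lemmas already established.
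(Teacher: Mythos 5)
Your proof is correct. The forward direction ($\Wf\subseteq\W$ via the identity naive fibrant replacement) is exactly the paper's argument, and your observation that identities lie in $\an$ (as the left class of a weak factorization system contains all isomorphisms) is the right justification. For the converse, you take a slightly different route: the paper works with an arbitrary replacement witnessing $f\in\W$, notes that the anodyne extensions $\iota_X,\iota_Y$ in that square are morphisms between naive fibrant objects and hence lie in $\Wf$ by condition \ref{anwe}, and concludes by $2$-out-of-$3$ for $\Wf$; you instead invoke \cref{lemma:wefr} applied to the tautological identity replacement of $f$. Both are valid at this point in the development, since \cref{lemma:wefr} is established earlier. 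The two arguments use the same underlying ingredients (condition \ref{anwe} and $2$-out-of-$3$ for $\Wf$ are what power \cref{lem:univfibrepl} and hence \cref{lemma:wefr}), but yours is a touch slicker as a deduction, while the paper's is more self-contained in that it does not route through the weakly initial replacement machinery.
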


\begin{proof}
The fact that $\Wf\subseteq\W$ is immediate, considering the naive fibrant replacement given by the identity anodyne extensions. Conversely, suppose that $f\in\W$; then, by definition, there exists a naive fibrant replacement
\begin{tz}
\node[](1) {$X$}; 
\node[right of=1](2) {$Y$}; 
\node[below of=1](1') {$X'$}; 
\node[below of=2](2') {$Y'$}; 

\draw[->] (1) to node[above,la]{$f$} (2); 
\draw[->] (1) to node[left,la]{$\iota_X$} (1'); 
\draw[->] (2) to node[right,la]{$\iota_Y$} (2'); 
\draw[->] (1') to node[below,la]{$f'$} (2');
\end{tz}
with $f'\in\Wf$. But $\iota_X$, $\iota_Y$ are anodyne extensions between naive fibrant objects, and thus belong to $\Wf$ by condition \ref{anwe}. By $2$-out-of-$3$ for $\Wf$, this shows that $f\in\Wf$.
\end{proof}

\begin{prop}\label{trivcoflift}
 A morphism in $\cof(\I)$ is a weak equivalence if and only if it has the left lifting
property with respect to naive fibrations between naive fibrant objects.
\end{prop}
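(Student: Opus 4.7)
The plan is to prove both implications directly, relying on the following key consequence of \cref{lemma:wenfib} combined with conditions \ref{ax:trivfib} and \ref{fibwe}: for any naive fibration $q$ between naive fibrant objects, one has $q\in\inj(\I)\iff q\in\Wf\iff q\in\W$. This equivalence is precisely the technical input needed to invoke Stanculescu's Lemma 1.1(2), on which the argument can alternatively be based.

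For the forward direction, let $f\colon A\to B$ be in $\cof(\I)\cap\W$ and $p\colon X\to Y$ be a naive fibration between naive fibrant objects. I would fix a naive fibrant replacement $\iota_B\colon B\to B^{\fibrant}$, factor $\iota_B f$ via $(\an,\nfib)$ as $A\xrightarrow{i}W\xrightarrow{q}B^{\fibrant}$ (so that $W$ is naive fibrant), and observe via $2$-out-of-$3$ for $\W$ (\cref{2of3}) and the equivalence above that $q\in\inj(\I)$. The lift of $f\in\cof(\I)$ against $q$ produces $\sigma\colon B\to W$ with $\sigma f=i$ and $q\sigma=\iota_B$. For the given lifting problem with top map $u\colon A\to X$ and bottom map $v\colon B\to Y$, I would then extend $v$ to $\tilde v\colon B^{\fibrant}\to Y$ using the LLP of $\iota_B\in\an$ against $Y\to 1\in\nfib$, lift the anodyne $i$ against the naive fibration $p$ (with bottom map $\tilde v q$) to obtain $w\colon W\to X$ satisfying $wi=u$ and $pw=\tilde v q$, and finally assemble $w\sigma\colon B\to X$ as the required diagonal lift.

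For the converse, let $f\colon A\to B$ in $\cof(\I)$ have the stated lifting property. Fix a naive fibrant replacement $\iota_B\colon B\to B^{\fibrant}$ and factor $\iota_B f$ via $(\an,\nfib)$ as $A\xrightarrow{i}Z\xrightarrow{p}B^{\fibrant}$; here $Z$ is naive fibrant, so $p$ is a naive fibration between naive fibrant objects. The hypothesis applied to the lifting problem with $f$ on the left and $p$ on the right yields $\sigma\colon B\to Z$ with $\sigma f=i$ and $p\sigma=\iota_B$, and lifting the anodyne $\iota_B$ against $p$ yields a section $s\colon B^{\fibrant}\to Z$ of $p$ satisfying $s\iota_B=\sigma$. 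Then the pair of arrow-category morphisms $(\id_A,s)\colon\iota_B f\to i$ and $(\id_A,p)\colon i\to\iota_B f$ in $\C^{\mathbbm{2}}$ exhibits $\iota_B f$ as a retract of $i$. Since $i\in\an\subseteq\W$ by \cref{ex:anodynewe} and $\W$ is closed under retracts (a consequence of $2$-out-of-$6$, \cref{2of3}), this gives $\iota_B f\in\W$, and $2$-out-of-$3$ for $\W$ together with $\iota_B\in\an\subseteq\W$ then yields $f\in\W$.

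The main obstacle lies in the converse direction: a naive attempt to realize $f$ itself as a retract of the anodyne extension $i$ fails because $\iota_B$ admits no retraction in general. The key trick is instead to exhibit $\iota_B f$ as the retract of $i$, and then transfer the conclusion back to $f$ via $2$-out-of-$3$ for $\W$.
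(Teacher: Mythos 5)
Your proof is correct in substance, but it takes a genuinely different route from the paper: the paper simply invokes Stanculescu's \cite[Lemma 1.1(2)]{stanc} with $(\cA,\cB)=(\cof(\I),\inj(\I))$ and checks its hypotheses (namely $\an\subseteq\W$, 2-out-of-6 for $\W$, $\nfib\cap\Wf\subseteq\inj(\I)$, and $\W=\Wf$ between naive fibrant objects), whereas you unfold that lemma into a direct, self-contained argument. You correctly identified that the equivalence $q\in\inj(\I)\iff q\in\Wf\iff q\in\W$ for naive fibrations $q$ between naive fibrant objects is the engine of both approaches. Your forward direction is a clean retraction argument (lift $f$ against the trivial fibration $q$ obtained from a fibrant replacement, then lift the anodyne $i$ against $p$), and all the stated equations check out; your converse correctly exhibits $\iota_B f$ as a retract of the anodyne extension $i$. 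What your version buys is transparency and independence from the external reference; what the paper's version buys is brevity.

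One point needs repair. In the converse you justify $\iota_B f\in\W$ by asserting that ``$\W$ is closed under retracts (a consequence of $2$-out-of-$6$).'' That general principle is not available: a class containing the identities and satisfying 2-out-of-6 need not be closed under arbitrary retracts, and nothing in the paper establishes retract-closure of $\W$ at this stage. Fortunately your specific retract is of a special form (both structure maps are the identity on the domain $A$), so the conclusion follows from a single direct application of 2-out-of-6: for the composable triple
\[
A\xrightarrow{\;\iota_B f\;}B^{\fibrant}\xrightarrow{\;s\;}Z\xrightarrow{\;p\;}B^{\fibrant},
\]
you have $s\circ(\iota_B f)=\sigma f=i\in\an\subseteq\W$ and $p\circ s=\id\in\W$, whence $\iota_B f\in\W$ (and indeed $p\in\W$, so $p\in\Wf$ by \cref{lemma:wenfib}, which even lets you conclude $f\in\W$ directly from \cref{def:we}). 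With that substitution the argument is complete.
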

 
\begin{proof}
This is obtained from \cite[Lemma 1.1(2)]{stanc} by taking $(\cA,\cB)=(\cof(\I),\inj(\I))$ and recalling that $\an=\cof(\J)\subseteq \W$ (\cref{ex:anodynewe}), $\W$ has $2$-out-of-$6$ (\cref{2of3}), $\inj(\J)\cap\Wf\subseteq \inj(\I)$ by condition \ref{fibwe}, and lastly that the classes $\W$ and $\Wf$ coincide when we restrict to the morphisms between naive fibrant objects (\cref{lemma:wenfib}).
\end{proof}
           
As a straightforward consequence, we obtain the following result, which is condition \ref{Smith:trivcof} in Smith's theorem.
 
\begin{cor}\label{cor:closedcofwe}
The class $\cof(\I) \cap \W$ is closed under pushout and transfinite composition.
\end{cor}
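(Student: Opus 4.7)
The plan is to deduce this corollary almost immediately from \cref{trivcoflift}. That proposition characterizes the class $\cof(\I)\cap\W$ as the class of morphisms having the left lifting property with respect to the class $\nfib_0$ of naive fibrations between naive fibrant objects. Concretely, $\cof(\I)\cap\W = \nfib_0^{\boxslash_\ell}$, where $\boxslash_\ell$ denotes the left lifting property against a class.

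The key observation is then completely formal: for any class $\mathcal{D}$ of morphisms in a cocomplete category, the class $\mathcal{D}^{\boxslash_\ell}$ of morphisms with the left lifting property against $\mathcal{D}$ is automatically closed under pushout and transfinite composition. This is a standard fact about lifting properties (it is part of what makes the left class of any weak factorization system cocomplete in this sense), and follows by a direct diagram chase: given a pushout square with left-hand leg $f \in \mathcal{D}^{\boxslash_\ell}$, any lifting problem of the pushout against a morphism in $\mathcal{D}$ can be pre-composed with the pushout square to produce a lifting problem for $f$, whose solution induces the desired lift by the universal property of the pushout; similarly, a compatible family of lifts against a transfinite composite can be constructed stage by stage and then assembled by the universal property of the colimit.

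Therefore I would simply write: by \cref{trivcoflift}, the class $\cof(\I)\cap\W$ is the class of morphisms with the left lifting property against $\nfib \cap (\text{morphisms between naive fibrant objects})$, hence it is closed under pushout and transfinite composition.

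I do not expect any genuine obstacle here; the entire content of the corollary has been pushed into \cref{trivcoflift}, and what remains is the general principle that classes defined by a left lifting property are closed under the colimit-type operations in question. The only care needed is to state the general lifting-class closure principle (or cite a reference for it) rather than re-deriving the diagram chase in detail.
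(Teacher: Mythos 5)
Your approach is the same as the paper's: \cref{trivcoflift} reduces the corollary to the standard fact that classes defined by a left lifting property are closed under pushout and transfinite composition. One small imprecision: \cref{trivcoflift} only characterizes which morphisms \emph{of $\cof(\I)$} lie in $\W$, so the correct identity is $\cof(\I)\cap\W=\cof(\I)\cap{}^{\boxslash}\mathcal{N}_0$ (where $\mathcal{N}_0$ denotes the naive fibrations between naive fibrant objects), not $\cof(\I)\cap\W={}^{\boxslash}\mathcal{N}_0$; you then also need to invoke that $\cof(\I)$ itself, being the left class of a weak factorization system, is closed under pushout and transfinite composition, so the intersection is as well.
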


\subsection{Proof of \cref{thm:main}}

We can finally prove the existence of the proposed model structure.

\begin{proof}[Proof of \cref{thm:main}]
As we see from condition \ref{ax:trivfib},  \cref{2of3,Waccessible,cor:closedcofwe}, the conditions in Smith's theorem are satisfied and thus there exists a combinatorial model structure on $\C$ whose class of cofibrations and weak equivalences are given by $\cof(\I)$ and $\W$, respectively. 

To complete our description, it remains to show that the fibrations between naive fibrant objects consist of the naive fibrations, and hence that the fibrant objects are precisely the naive fibrant objects; and that the weak equivalences between fibrant objects are precisely the morphisms in $\Wf$.

If $f\colon X\to Y$ is a naive fibration between naive fibrant objects, then by \cref{trivcoflift} we have that $f\in (\cof(\I)\cap \W)^{\boxslash}\eqqcolon \fib$. Conversely, since $\an\subseteq\cof(\I)$ by assumption, combining this with \cref{ex:anodynewe} yields $\an\subseteq\cof(\I)\cap\W$; thus, 
\[ \nfib=\an^{\boxslash}\supseteq (\cof(\I)\cap \W)^{\boxslash}= \fib \] as needed. In particular, by applying this to the unique morphisms $f\colon X\to 1$ to the terminal object, we get that fibrant objects are precisely the naive fibrant ones.

The fact that weak equivalences between fibrant objects are precisely the morphisms in $\Wf$ is now the content of \cref{lemma:wenfib}. 
\end{proof}

\subsection{Path object argument}

We conclude this section by proposing an alternative to condition \ref{anwe} in terms of the existence of certain path objects, which has proved to be much simpler to verify in practice.

\begin{prop}\label{pathobj}
Assuming that $\Wf$ has $2$-out-of-$6$, the following two conditions are equivalent:
\begin{enumerate}[label=(\Alph*)]
\setcounter{enumi}{15}
\item[\ref{anwe}] the morphisms in $\an$ between naive fibrant objects are in $\Wf$,
    \item \label{Path} for every naive fibrant object $X$, there is a factorization of the diagonal morphism 
    \[ X\xrightarrow{w} \Path X\xrightarrow{p} X\times X \]
    such that $w\in \Wf$ and $p\in \nfib$.
\end{enumerate}
\end{prop}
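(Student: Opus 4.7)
The plan is to prove the two implications separately, relying throughout on $2$-out-of-$6$ for $\Wf$ (which implies $2$-out-of-$3$) and the assumption from the start of the section that $\Wf$ contains all isomorphisms. For \ref{anwe} $\Rightarrow$ \ref{Path}, given a naive fibrant object $X$, I would simply apply the weak factorization system $(\an, \nfib)$ to the diagonal $X \to X \times X$, obtaining a factorization $X \xrightarrow{w} \Path X \xrightarrow{p} X \times X$ with $w \in \an$ and $p \in \nfib$. The key preliminary observation is that $\Path X$ is then naive fibrant: since $X \to 1 \in \nfib$ and $\nfib$ is closed under pullback and composition (as the right class of a weak factorization system), the product $X \times X$ is naive fibrant, and then so is $\Path X$ by composition with $p$. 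Therefore $w \in \an$ is a morphism between naive fibrant objects, and \ref{anwe} gives $w \in \Wf$ as required.

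For the converse \ref{Path} $\Rightarrow$ \ref{anwe}, let $f \colon X \to Y$ be in $\an$ between naive fibrant objects. The strategy is to manufacture enough morphisms in $\Wf$ to reach $f$ via $2$-out-of-$6$. First, using the lifting property of $f \in \an$ against the naive fibration $X \to 1$, I obtain a retraction $r \colon Y \to X$ with $rf = \id_X$. Next, fix a path object $Y \xrightarrow{w_Y} \Path Y \xrightarrow{(p_0, p_1)} Y \times Y$ as provided by \ref{Path}. From $p_i w_Y = \id_Y \in \Wf$ and $w_Y \in \Wf$, $2$-out-of-$3$ yields $p_0, p_1 \in \Wf$.

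The crucial step, and the main obstacle, is to construct a ``homotopy'' $H \colon Y \to \Path Y$ by lifting $f \in \an$ against $(p_0, p_1) \in \nfib$ in the commutative square with top edge $w_Y f \colon X \to \Path Y$ and bottom edge $(fr, \id_Y) \colon Y \to Y \times Y$; commutativity rests only on $p_i w_Y = \id_Y$ and $rf = \id_X$. The lift $H$ then satisfies $p_0 H = fr$ and $p_1 H = \id_Y$, whence $H \in \Wf$ by $2$-out-of-$3$ (using $p_1 \in \Wf$), and so $fr = p_0 H$ lies in $\Wf$ as a composite in $\Wf$. Finally, $2$-out-of-$6$ applied to the composable triple $X \xrightarrow{f} Y \xrightarrow{r} X \xrightarrow{f} Y$, whose successive composites $fr$ and $rf = \id_X$ both lie in $\Wf$, yields $f \in \Wf$. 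It is worth emphasizing that $2$-out-of-$3$ alone would not suffice in this last step: we produce $fr$ directly, and genuinely disentangling $f$ from the pair $fr$ and $rf$ requires the length-three $2$-out-of-$6$ property on the sequence $f, r, f$.
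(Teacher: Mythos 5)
Your proof is correct and follows essentially the same route as the paper's: factor the diagonal via $(\an,\nfib)$ for one direction, and for the converse produce a retraction $r$ by lifting against $X\to 1$, lift $f$ against the path fibration to get a homotopy witnessing $fr\in\Wf$, and conclude with $2$-out-of-$6$ on the triple $f,r,f$. The only differences from the paper are cosmetic (which projection of the path object equals the identity on the homotopy), and your explicit check that $\Path X$ is naive fibrant is a welcome detail the paper leaves implicit.
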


\begin{rem} \label{rem:piwe}
We write $p_i\colon \Path X\to X$ for the composite 
\[ \Path X\xrightarrow{p} X\times X\xrightarrow{\pi_i} X, \] 
where $\pi_i$ denotes the projection for $i=0,1$. Since $w\in \Wf$ and $p_i  w=\id_X$, we get that $p_i\in \Wf$ by $2$-out-of-$3$.
\end{rem}

\begin{proof}
Suppose that \ref{anwe} is satisfied. We factor the diagonal morphism at $X$ as 
\[ X\xrightarrow{w} \Path X\xrightarrow{p} X\times X \] 
with $w\in \an$ and $p\in \nfib$. Note that $w$ is then an anodyne extension between naive fibrant objects, and thus by \ref{anwe} we get that $w\in\Wf$, showing \ref{Path}. 

Now suppose that \ref{Path} holds, and let $f\colon X\to Y$ be an anodyne extension with $X,Y$ naive fibrant. Then there is a lift in the diagram below left. 
\begin{tz}
\node[](1) {$X$}; 
\node[below of=1](2) {$Y$}; 
\node[right of=1](3) {$X$}; 
\draw[->] (1) to node[above,la]{$\id_X$} (3);
\draw[->] (1) to node[left,la]{$f$} (2); 
\draw[->,dashed] (2) to node[below,la,xshift=2pt]{$s$} (3);

\node[right of=1,xshift=3cm](1) {$X$}; 
\node[below of=1](2) {$Y$}; 
\node[right of=1](3) {$Y$}; 
\node[right of=3](3') {$\Path Y$}; 
\node[below of=3'](4) {$Y\times Y$};
\draw[->] (1) to node[above,la]{$f$} (3);
\draw[->] (3) to node[above,la]{$w$} (3');
\draw[->] (1) to node[left,la]{$f$} (2); 
\draw[->] (3') to node[right,la]{$p$} (4);
\draw[->,dashed] (2) to node[below,la]{$g$} (3');
\draw[->] (2) to node[below,la]{$(\id_Y,f s)$} (4);
\end{tz}
Since $f\in \an$ and $p\colon \Path Y\to Y\times Y$ is in $\nfib$, there is also a lift in the commutative diagram above right. 

Now, the morphism $p_0\colon \Path Y\to Y$ is in $\Wf$ by \cref{rem:piwe}, and so the equality $p_0 g=\id_Y$ gives $g\in \Wf$ by $2$-out-of-$3$. Using the fact that $p_1\colon \Path Y\to Y$ is in $\Wf$ and that $p_1 g=f s$, we further get that $f s\in\Wf$ by $2$-out-of-$3$. Finally, applying $2$-out-of-$6$ to the following diagram 
\begin{tz}
\node[](1) {$X$}; 
\node[below of=1](2) {$Y$}; 
\node[right of=1](3) {$X$};  
\node[below of=3](4) {$Y$};
\draw[->] (1) to node[above,la]{$\id_X$} (3);
\draw[->] (1) to node[left,la]{$f$} (2); 
\draw[->] (3) to node[right,la]{$f$} (4);
\draw[->] (2) to node[below,la,xshift=2pt]{$s$} (3);
\draw[->] (2) to node[below,la]{$f s$} (4);
\end{tz}
we conclude that $f\in \Wf$ as desired. 
\end{proof}

\section{Fibrantly-transferred model structures along an adjunction}

The main source of applications of \cref{thm:main} that we envision, and in fact what led us to consider this problem initially, is the case when the cofibrations, and the fibrant objects and fibrations and weak equivalences between them, are transferred through a right adjoint functor to a known model structure. Concretely, we consider the following scenario.

Let $(\cM,\cofM,\fibM,\weM)$ be a combinatorial model category with generating sets of cofibrations and trivial cofibrations denoted by $\IM$ and $\JM$, and let $\C$ be a locally presentable category. Suppose that we have an adjunction 
\begin{tz}
\node[](1) {$\C$}; 
\node[right of=1,xshift=1cm](2) {$\cM$.}; 

\draw[->] ($(1.east)-(0,5pt)$) to node[below,la]{$R$} ($(2.west)-(0,5pt)$);
\draw[->] ($(2.west)+(0,5pt)$) to node[above,la]{$L$} ($(1.east)+(0,5pt)$);

\node[la] at ($(1.east)!0.5!(2.west)$) {$\bot$};
\end{tz}

We can use this adjunction to define the classes of morphisms involved in the setup of \cref{thm:main}.

\begin{defn}
We define $\I=L\IM$ to be the generating set of cofibrations in $\C$. 
\end{defn}

Note that $\inj(\I)=R^{-1}(\trivfibM)$ and so we have a weak factorization system \[ (\cof(\I),R^{-1}(\trivfibM)) \] in $\C$ induced from the weak factorization system $(\cof(\IM), \trivfibM)$ in $\cM$. 

\begin{defn}
We define $\J=L\JM$ to be the generating set of anodyne extensions in~$\C$. 
\end{defn}

Note that $\inj(\J)=R^{-1}(\fibM)$ and so we have a weak factorization system \[ \cof(L\JM), R^{-1}(\fibM)) \]
in $\C$ induced from the weak factorization system $(\cof(\JM),\fibM)$ in $\cM$; this will be the factorization system that we denote by $(\an,\nfib)$.

\begin{rem}
An object $X\in\C$ is naive fibrant if and only if $RX$ is fibrant in $\cM$. 
\end{rem}

\begin{defn}
A morphism $f$ in $\C$ between naive fibrant objects is a weak equivalence if $f\in R^{-1}(\weM)$. We denote this class of morphisms by $\Wf$.
\end{defn}

Just as we did in \cref{def:we}, we can construct a class of weak equivalences $\W$ by using the class $\Wf$ and the naive fibrant replacements given by the weak factorization system $(\an,\nfib)$. With these classes of morphisms, we can now recast our main result under the light of an adjunction. We refer to the resulting model structure as a \emph{fibrantly-transferred model structure}.

\begin{theorem}\label{thm:semirightinduced}
Let $(\cM,\cofM,\fibM,\weM)$ be a combinatorial model category, and let $\C$ be a locally presentable category. Suppose that we have an adjunction 
\begin{tz}
\node[](1) {$\C$}; 
\node[right of=1,xshift=1cm](2) {$\cM$}; 

\draw[->] ($(1.east)-(0,5pt)$) to node[below,la]{$R$} ($(2.west)-(0,5pt)$);
\draw[->] ($(2.west)+(0,5pt)$) to node[above,la]{$L$} ($(1.east)+(0,5pt)$);

\node[la] at ($(1.east)!0.5!(2.west)$) {$\bot$};
\end{tz}
and that the following properties are satisfied:
\begin{enumerate}[label=(\arabic*)]
    \item\label{srind2} $R^{-1}(\trivfibM)\subseteq \W$,
    \item\label{srind1} for every naive fibrant object $X$, there is a factorization of the diagonal morphism 
    \[ X\xrightarrow{w} \Path X\xrightarrow{p} X\times X \]
    such that $w\in R^{-1}(\weM)$ and $p\in R^{-1}(\fibM)$.
\end{enumerate}

Then, there exists a combinatorial model structure on $\C$ in which a morphism $f$ is a trivial fibration if and only if $Rf$ is so in $\cM$, and an object $X$ is fibrant if and only if $RX$ is fibrant in $\cM$. Moreover, a morphism $f$ between fibrant objects in $\C$ is a weak equivalence (resp.\ fibration) if and only if $Rf$ is so in $\cM$.
\end{theorem}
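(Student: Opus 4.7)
The plan is to apply \cref{thm:main} to the setup of the theorem, using \cref{pathobj} to obtain condition \ref{anwe} from hypothesis \ref{srind1}. So I must verify that the choices $\I = L\IM$, $(\an,\nfib) = (\cof(L\JM), R^{-1}(\fibM))$, and $\Wf = R^{-1}(\weM)$ restricted to morphisms between naive fibrant objects, together with the accompanying class $\W$ built as in \cref{def:we}, satisfy all the hypotheses of \cref{thm:main}. A preliminary observation is that $\an = \cof(L\JM) \subseteq \cof(L\IM) = \cof(\I)$, since $\JM \subseteq \cofM$ implies $L\JM \subseteq \cof(L\IM)$; this is the containment required in the statement of \cref{thm:main}.

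The next step is to dispatch the easier conditions. Condition \ref{ax:trivfib} amounts to $\inj(\I) \subseteq \W$; but $\inj(L\IM) = R^{-1}(\inj(\IM)) = R^{-1}(\trivfibM)$, so this is exactly hypothesis \ref{srind2}. Condition \ref{2of6Wf} (that $\Wf$ has 2-out-of-6) follows from the fact that $\weM$ has 2-out-of-6 in $\cM$ and $R$ preserves composition, together with the trivial observation that if two of the three morphisms in a 2-out-of-6 diagram between naive fibrant objects land in $\Wf$, the remaining morphisms are between naive fibrant objects. For condition \ref{accessibility}, I take $\overline{\cW} = R^{-1}(\weM)$: since $R$ is a right adjoint between locally presentable categories it is accessible, and hence so is the induced functor $R_*\colon \C^{\mathbbm{2}} \to \cM^{\mathbbm{2}}$; combinatoriality of $\cM$ guarantees that $\weM \subseteq \cM^{\mathbbm{2}}$ is an accessible subcategory, and so by \cite[Corollary A.2.6.5]{HTT} its preimage $\overline{\cW}$ is also accessible, and by construction $\Wf$ is its restriction to morphisms between naive fibrant objects. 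Condition \ref{fibwe} is immediate: if $f \in \nfib \cap \Wf$ then $Rf \in \fibM \cap \weM = \trivfibM$, so $f \in R^{-1}(\trivfibM) = \inj(\I)$.

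The remaining condition \ref{anwe} is where I invoke \cref{pathobj}: it suffices to verify the path-object condition \ref{Path} for naive fibrant $X$. Given hypothesis \ref{srind1}, I produce the factorization $X \xrightarrow{w} \Path X \xrightarrow{p} X \times X$ with $w \in R^{-1}(\weM)$ and $p \in R^{-1}(\fibM) = \nfib$. To conclude that $w \in \Wf$, I must check that $\Path X$ is naive fibrant. This holds because $R$ preserves products, so $R(X \times X) = RX \times RX$ is fibrant in $\cM$ (being a product of fibrant objects), and composing with the naive fibration $p$ shows that $\Path X$ is naive fibrant. Hence $w \in \Wf$ as required, and condition \ref{Path} holds.

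With all five hypotheses verified, \cref{thm:main} yields a combinatorial model structure on $\C$ with cofibrations $\cof(\I)$, fibrant objects the naive fibrant objects, and weak equivalences (resp.\ fibrations) between fibrant objects given by $\Wf$ (resp.\ $\nfib$). Translating back through $R$ gives exactly the three descriptions in the statement: $X$ is fibrant iff $RX$ is fibrant; $f$ is a trivial fibration iff $f \in \inj(\I) = R^{-1}(\trivfibM)$, iff $Rf$ is a trivial fibration in $\cM$; and a morphism $f$ between fibrant objects lies in $\Wf$ (resp.\ $\nfib$) iff $Rf$ lies in $\weM$ (resp.\ $\fibM$). The main conceptual step is the path-object reduction via \cref{pathobj}; all other conditions are nearly tautological translations along the adjunction once accessibility of $R$ is acknowledged.
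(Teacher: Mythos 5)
Your proposal is correct and follows essentially the same route as the paper: apply \cref{thm:main} with $\I=L\IM$, $\J=L\JM$, $\Wf$ the restriction of $R^{-1}(\weM)$ to naive fibrant objects, and $\overline{\cW}=R^{-1}(\weM)$, deducing condition \ref{anwe} from hypothesis \ref{srind1} via \cref{pathobj}. Your explicit check that $\Path X$ is naive fibrant (so that $w$ genuinely lands in $\Wf$) is a small point the paper leaves implicit, but otherwise the two arguments coincide.
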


\begin{proof}
It suffices to apply \cref{thm:main} to the classes defined above. First, the fact that $\JM\subseteq \cof(\IM)$ implies that $L\JM\subseteq L\cof(\IM)$ and, moreover, as $L$ commutes with colimits, we have $L\cof(\IM)\subseteq \cof(L\IM)$. Hence, as $\cof(L\IM)$ is saturated, we have
\[\an\coloneqq \cof(L\JM)\subseteq\cof(L\IM)=\cof(\I). \]
Next note that condition \ref{ax:trivfib} is precisely the first hypothesis in this theorem, and condition \ref{anwe} agrees with the second hypothesis by \cref{pathobj}. The fact that $\Wf$ satisfies $2$-out-of-$6$ is straightforward from its definition as the restriction of $R^{-1}(\weM)$ to naive fibrant objects, since the class of weak equivalences $\weM$ in any model category satisfies $2$-out-of-$6$. Choosing $\overline{\cW}\coloneqq R^{-1}(\weM)$ gives an accessible full subcategory of $\C^{\mathbbm{2}}$. Indeed, as $\cM$ is combinatorial and $\C$ is locally presentable, then $\weM$ is an accessible full subcategory of $\cM^{\mathbbm{2}}$ and as $R$ is an accessible functor we can conclude by \cite[Corollary A.2.6.5]{HTT}. Finally 
\[ \nfib\cap\Wf\subseteq R^{-1}(\fibM)\cap R^{-1}(\weM)=R^{-1}(\fibM\cap\weM)=\inj(L\IM). \qedhere \]
\end{proof}

\begin{rem}
We chose to formulate \cref{thm:semirightinduced} using a path object condition since we find this  easier to verify in practice, but of course, by \cref{pathobj} we could replace condition \ref{srind1} above by the following requirement:
\begin{enumerate}
    \item[(2')] every morphism $f\in \cof(L\J)$ between naive fibrant objects is in $R^{-1}(\weM)$.
\end{enumerate} 
\end{rem}

As expected, the adjunction through which we transfer the model structure becomes a Quillen adjunction.

\begin{prop}
In the setting of \cref{thm:semirightinduced}, the adjunction
\begin{tz}
\node[](1) {$\C$}; 
\node[right of=1,xshift=1cm](2) {$\cM$.}; 

\draw[->] ($(1.east)-(0,5pt)$) to node[below,la]{$R$} ($(2.west)-(0,5pt)$);
\draw[->] ($(2.west)+(0,5pt)$) to node[above,la]{$L$} ($(1.east)+(0,5pt)$);

\node[la] at ($(1.east)!0.5!(2.west)$) {$\bot$};
\end{tz}
is a Quillen pair.
\end{prop}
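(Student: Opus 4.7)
The plan is to verify that $L$ is a left Quillen functor, i.e., that $L$ preserves cofibrations and trivial cofibrations. This is well suited to the setup of \cref{thm:semirightinduced} because the generating cofibrations and generating anodyne extensions in $\C$ were defined precisely as the images under $L$ of the generating (trivial) cofibrations in $\cM$.

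First I would handle cofibrations. Since $\cM$ is combinatorial, the class $\cofM$ coincides with $\cof(\IM)$, and by construction $\cof(\I) = \cof(L\IM)$. Because $L$ is a left adjoint, it commutes with colimits and preserves retracts, so
\[ L(\cof(\IM)) \subseteq \cof(L\IM) = \cof(\I), \]
showing that $L$ preserves cofibrations.

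Next I would handle trivial cofibrations. In the combinatorial model category $\cM$, the class of trivial cofibrations coincides with $\cof(\JM)$, and by construction $\an = \cof(L\JM)$. The same colimit-preservation argument gives
\[ L(\cof(\JM)) \subseteq \cof(L\JM) = \an. \]
Now, anodyne extensions are contained in $\cof(\I)$ by assumption (since this is one of the hypotheses of \cref{thm:main}, verified inside the proof of \cref{thm:semirightinduced}), and every anodyne extension is a weak equivalence by \cref{ex:anodynewe}. Hence $\an \subseteq \cof(\I) \cap \W$, which is the class of trivial cofibrations of the model structure constructed in \cref{thm:semirightinduced}. This shows $L$ sends trivial cofibrations to trivial cofibrations.

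There is no real obstacle here; the result essentially follows by unwinding the definitions of $\I$ and $\J$ and invoking the colimit-preservation of $L$, together with the already-established inclusion $\an \subseteq \cof(\I) \cap \W$. The only point worth flagging is the (standard) fact that in a cofibrantly generated model category trivial cofibrations are exactly $\cof(\JM)$, which is what lets us reduce the check to the generating set.
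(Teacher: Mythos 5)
Your proof is correct, but it runs along the left adjoint where the paper runs along the right adjoint. The paper verifies that $R$ is right Quillen: trivial fibrations in $\C$ are by construction $\inj(\I)=\inj(L\IM)=R^{-1}(\trivfibM)$, so $R$ preserves them tautologically, and for fibrations it uses $\fib=(\cof(\I)\cap\W)^\boxslash\subseteq\an^\boxslash=R^{-1}(\fibM)$, which rests on the inclusion $\an\subseteq\cof(\I)\cap\W$. You instead verify that $L$ is left Quillen, using that $L$ preserves colimits and retracts to get $L(\cof(\IM))\subseteq\cof(L\IM)=\cof(\I)$ and $L(\cof(\JM))\subseteq\cof(L\JM)=\an$, and then invoking the very same inclusion $\an\subseteq\cof(\I)\cap\W$ to conclude. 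So the essential content is identical (and your argument for $L$ preserving cofibration classes is the one the paper itself uses inside the proof of the induction theorem to show $\an\subseteq\cof(\I)$); the difference is which of the two equivalent Quillen-pair criteria you check. The paper's choice makes the trivial-fibration half literally definitional, whereas your choice requires the (standard, but worth stating) fact that a colimit- and retract-preserving left adjoint carries $\cof(\IM)$ into $\cof(L\IM)$ — equivalently, one can see this purely by adjunction, since $Lf$ lifts against $g$ iff $f$ lifts against $Rg$ and $R(\inj(L\IM))\subseteq\inj(\IM)$. Both routes are complete.
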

\begin{proof}
The functor $R$ preserves trivial fibrations by definition. To see that $R$ preserves fibrations, recall that anodyne extensions are included in $\cof(\I)$ by construction, and in $\W$ by \cref{ex:anodynewe}; then, \[ \fib=(\cof(\I)\cap\W)^\boxslash \subseteq\an^\boxslash=R^{-1}(\fibM). \qedhere \]
\end{proof}

\begin{rem}\label{rmk:comparison}
The classical approach in the presence of an adjunction is to right-transfer the model structure along the right adjoint $R\colon \cC\to \cM$, where the class of fibrations and weak equivalences in $\cC$ are given by $R^{-1}(\fibM)$ and $R^{-1}(\weM)$, respectively. However, as we will see in \cref{application:cat}, there are settings where the right-transferred model structure does not exist, while the fibrantly-transferred one does. If the right-transferred model structure exists, it coincides with the fibrantly-transferred one, as they have the same cofibrations and fibrant objects (see \cite[Proposition E.1.10]{JoyalVolumeII}).

A dual version of \cite[Theorem 2.2.1]{HKRS} inspired by Quillen's original path object argument \cite{Quillen} (see also \cite[Theorem 6.2]{Moser} for a precise statement) gives a useful criterion for the existence of a right-transferred model structure. When comparing our result to this statement, we can see that we no longer require that fibrant replacements are weak equivalences, which is automatic in our framework, but instead that trivial fibrations are weak equivalences, which is automatic in the right-transferred framework. 
\end{rem}

\begin{rem}
    Note that the set $\cJ_\cM$ is only needed to define the fibrations between fibrant objects in $\C$. Hence, throughout this section, one could replace the set $\cJ_\cM$ of generating trivial cofibrations of $\cM$ with a set of generating anodyne extensions of $\cM$, i.e., a set of morphisms which determine the fibrations between fibrant objects of $\cM$. This is very useful in practice, as we often only know fibrations between fibrant objects.
\end{rem}

\section{A model structure for double categories along the horizontal nerve} \label{application:cat}

In \cite{FPP}, Fiore\textendash Paoli\textendash Pronk construct several model structures on $\dblcat$, all closely related to the canonical model structure on $\cat$. As part of this program, they consider the horizontal nerve $N^h\colon \dblcat\to \cat^{\Dop}$ to relate them. Since $N^h$ is a right adjoint, and it is natural to consider the Reedy model structure on $\cat^{\Dop}$, a question arises of whether we can use this functor to right-transfer a model structure on $\dblcat$. However, they find that this is not possible.

After a brief recollection of the relevant double categorical preliminaries, in this section we show how the added flexibility of \cref{thm:semirightinduced} allows us to fibrantly-transfer a model structure on $\dblcat$ through the horizontal nerve $N^h$ from the Reedy model structure on~$\cat^{\Dop}$. Note that this does not change the proposed homotopy theory, as the fibrant objects and weak equivalences between them determine the homotopy category of a model structure.

\subsection{Double categorical background}\label{section:background}

A reader who is familiar with the theory of double categories is welcome to skip this section, as it does not introduce any new notions. On the other hand, a reader looking for a more thorough introduction to double categories may find many more details and examples in \cite[Chapter 3]{grandis}.

\begin{defn}
A \emph{double category} $\bA$ consists of objects $A,B,A',B',\ldots$, horizontal morphisms $f\colon A\to B$, vertical morphisms $u\colon A\arrowdot A'$, and squares
\begin{tz}
\node[](1) {$A$}; 
\node[below of=1](2) {$A'$}; 
\node[right of=1](3) {$B$}; 
\node[right of=2](4) {$B'$};

\draw[->,pro] (1) to node[left,la]{$u$} (2); 
\draw[->] (1) to node[above,la]{$f$} (3); 
\draw[->] (2) to node[below,la]{$f'$} (4); 
\draw[->,pro](3) to node[right,la]{$v$} (4); 
 
\node[la] at ($(1)!0.5!(4)$) {$\alpha$};
\end{tz}
together with associative and unital compositions for horizontal morphisms, vertical morphisms, and squares. We denote by $\id_A$ (resp.\ $e_A$) the horizontal (resp.\ vertical) identity at an object $A$,  by $\id_u$ (resp.\ $e_f$) the horizontal (resp.\ vertical) identity square at a vertical morphism $u$ (resp.\ horizontal morphism $f$), and we write $\square_A=\id_{e_A}=e_{\id_A}$ for the identity square at an object $A$. 

A \emph{double functor} $F\colon \cA\to \cB$ is an assignment on objects, horizontal morphisms, vertical morphisms, and squares that preserve all compositions and identities strictly. 

We denote by $\dblcat$ the category of double categories and double functors.
\end{defn}

There are several ways of seeing a 
category as a double category.  An intuitive way to do this is to encode the information of the 
category in the horizontal direction, while allowing only trivial vertical morphisms and squares.

\begin{defn}
The \emph{horizontal embedding} functor $\bH\colon \cat\to \dblcat$ sends a category $\cA$ to the double category $\bH\cA$ whose objects and horizontal morphisms are the objects and morphisms of $\cA$, while the vertical morphisms and squares are trivial. 
\end{defn}

\begin{rem}
The functor defined above admits a right adjoint $\bfH\colon \dblcat\to \cat$ which extracts from a double category $\bA$ its \emph{underlying horizontal category} $\bfH\bA$ forgetting the vertical morphisms and squares of $\bA$.
\end{rem}

\begin{rem}
By interchanging the horizontal and vertical directions, we similarly have adjoints $\bV\colon \dblcat\to \cat$ and $\bfV\colon \dblcat\to \cat$.
\end{rem}

The category of double categories is cartesian closed, and the corresponding internal hom will play a role in the model structure we construct in this section. We now recall the relevant definitions.

\begin{defn}
A \emph{horizontal natural transformation} $\kappa\colon F\Rightarrow G$ between double functors $F,G\colon \bA\to \bB$ consists of
\begin{rome}
    \item a horizontal morphism $\kappa_A\colon FA\to GA$ in $\bB$, for each object $A\in \bA$, 
    \item a square $\kappa_u$ in $\bB$ as below, for each vertical morphism $u\colon A\arrowdot A'$ in $\bA$, 
    \begin{tz}
    \node[](1) {$FA$}; 
\node[below of=1](2) {$FA'$}; 
\node[right of=1](3) {$GA$}; 
\node[below of=3](4) {$GA'$}; 

\draw[->,pro] (1) to node[left,la]{$Fu$} (2);
\draw[->] (2) to  node[below,la]{$\kappa_{A'}$} (4); 
\draw[->] (1) to node[above,la]{$\kappa_{A}$} (3); 
\draw[->,pro] (3) to node[right,la]{$Gu$} (4); 

\node[la] at ($(1)!0.5!(4)$) {$\kappa_u$};
\end{tz}
\end{rome} 
such that the squares in (ii) are compatible with vertical compositions and identities, and these data satisfy a naturality condition with respect to horizontal morphisms
and squares.

By reversing the horizontal and vertical directions, one can define \emph{vertical natural transformations}. 

A \emph{modification} $\mu$ between two horizontal  natural transformations $\kappa,\kappa'$ and two vertical natural transformations $\lambda,\lambda'$  as below left consists of a square $\mu_A$ in $\bB$ as below right, for each object $A\in \bA$, 
\begin{tz}
    \node[](1) {$F$}; 
\node[below of=1](2) {$F'$}; 
\node[right of=1](3) {$G$}; 
\node[below of=3](4) {$G'$}; 

\draw[pro,n] (1) to node[left,la]{$\lambda$} (2);
\draw[n] (2) to  node[below,la]{$\kappa'$} (4); 
\draw[n] (1) to node[above,la]{$\kappa$} (3); 
\draw[pro,n] (3) to node[right,la]{$\lambda'$} (4); 

\node[la] at ($(1)!0.5!(4)$) {$\mu$};

\node[right of=3,xshift=2cm](1) {$FA$}; 
\node[below of=1](2) {$F'A$}; 
\node[right of=1](3) {$GA$}; 
\node[below of=3](4) {$G'A$}; 

\draw[pro,->] (1) to node[left,la]{$\lambda_A$} (2);
\draw[->] (2) to  node[below,la]{$\kappa'_A$} (4); 
\draw[->] (1) to node[above,la]{$\kappa_A$} (3); 
\draw[pro,->] (3) to node[right,la]{$\lambda'_A$} (4); 

\node[la] at ($(1)!0.5!(4)$) {$\mu_A$};
\end{tz}
satisfying horizontal and vertical coherence conditions with respect to the square components of the natural transformations.
\end{defn}

The above data assemble to form double categories of double functors. 

\begin{defn} \label{homs}
    Let $\bA,\bB$ be double categories. The \emph{hom double category} $\llbracket \bA,\bB\rrbracket$ is the double category of double functors $\bA\to \bB$, horizontal and vertical natural transformations, and modifications. 
\end{defn}

\subsection{Right- vs.\ fibrantly-transferred model structure}

The following also appears as \cite[Definition 5.1]{FPP} using the description of \cite[Proposition 5.3]{FPP}.

\begin{defn}\label{defn:hornerve}
The \emph{horizontal nerve} $N^h\colon \dblcat\to \cat^{\Dop}$ is the functor sending a double category $\bA$ to the simplicial object in $\cat$
\[ N^h\bA\colon \Dop\to \cat, \quad [n]\mapsto \bfV\llbracket\bH [n], \bA\rrbracket. \]
More explicitly, $(N^h\bA)_0=\bA_0$ is the category of objects and vertical morphisms of $\bA$, $(N^h\bA)_1=\bA_1$ is the category of horizontal morphisms and squares of $\bA$, and for $n\geq 2$
\[ (N^h\bA)_n=\bA_1\times_{\bA_0}\ldots \times_{\bA_0} \bA_1 \]
is the category of $n$ composable horizontal morphisms and $n$ horizontally composable squares in $\bA$. 
\end{defn}

\begin{prop}\cite[Theorem 5.6]{FPP}
The functor $N^h\colon \dblcat\to \cat^{\Dop}$ admits a left adjoint $c^h\colon \cat^{\Dop}\to \dblcat$.
\end{prop}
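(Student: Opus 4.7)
The plan is to produce $c^h$ by invoking the adjoint functor theorem for locally presentable categories. First I would note that both sides are locally presentable: $\dblcat$ is well known to be so (it is the category of models for a finite limit sketch), and $\cat^{\Dop}$ is locally presentable because $\cat$ is and functor categories from a small category into a locally presentable category remain locally presentable. It then suffices to show that $N^h$ preserves all small limits and is accessible.

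Since limits and filtered colimits in $\cat^{\Dop}$ are computed levelwise, both properties can be verified one simplicial degree at a time. Concretely, for every $n\geq 0$ I would show that the functor $\bA\mapsto \bfV\llbracket\bH[n],\bA\rrbracket$ preserves small limits and is accessible. This factors as $\llbracket\bH[n],-\rrbracket\colon\dblcat\to\dblcat$ followed by $\bfV\colon\dblcat\to\cat$. The first is a right adjoint because $\dblcat$ is cartesian closed with internal hom $\llbracket-,-\rrbracket$, and the second is right adjoint to the vertical embedding $\bV\colon\cat\to\dblcat$. Right adjoints between locally presentable categories preserve limits and are accessible, so each level is well-behaved, and therefore so is $N^h$ itself.

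I do not foresee a serious obstacle here: the argument is essentially formal once these ingredients are assembled. If an explicit description of $c^h$ were desired, one could alternatively realize it as a coend by left Kan extending a suitable cosimplicial diagram in $\dblcat$ built from the objects $\bH[n]$ and encoding the vertical direction, but for the bare existence statement the adjoint functor theorem yields the shortest route and matches the reference cited.
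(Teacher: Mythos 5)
Your argument is correct as an existence proof. The paper itself does not prove this statement; it simply cites \cite[Theorem 5.6]{FPP}, where the left adjoint $c^h$ is \emph{constructed explicitly} (essentially as the left Kan extension / coend you mention in your closing remark, which is then needed later in the paper to actually compute the generating cofibrations and anodyne extensions as images under $c^h$). Your route via the adjoint functor theorem is the formal alternative: both categories are locally presentable, and each level functor $\bA\mapsto \bfV\llbracket\bH[n],\bA\rrbracket$ is a composite of two right adjoints (the internal hom $\llbracket\bH[n],-\rrbracket$ for the cartesian closed structure on $\dblcat$, followed by $\bfV\dashv$'s right-adjointness to $\bV$ — more precisely $\bfV$ is right adjoint to $\bV$), hence preserves limits and is accessible; since limits and ($\kappa$-)filtered colimits in $\cat^{\Dop}$ are computed levelwise, $N^h$ preserves limits and is accessible, so a left adjoint exists. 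The only point worth being slightly careful about is uniformity of the accessibility rank over the countably many levels, but this is harmless (take a supremum, or observe directly that $\bH[n]$ is finitely presentable and $\bfV$ preserves filtered colimits, so each level in fact preserves filtered colimits). What your approach buys is brevity and independence from the explicit formula; what the cited construction buys is the concrete description of $c^h$ on objects like $\cC\times X$ (\cref{lem:computec}), which the subsequent sections of the paper rely on to identify the generating sets. For the bare statement as written, your proof is complete.
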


We endow the category $\cat^{\Dop}$ with the Reedy model structure \cite[\S 15.3]{Hirsch} on simplicial objects in the canonical model structure on $\cat$. Recall that the weak equivalences in this model structure are the level-wise weak equivalences; i.e., the morphisms $\cX\to \cY$ in $\cat^{\Dop}$ such that $\cX_n\to \cY_n$ is an equivalence of categories, for all $n\geq 0$. 

As established in \cite{FPP}, it is not possible to right-transfer a model structure on $\dblcat$ through the nerve functor $N^h$.

\begin{prop}\cite[Theorem 7.22]{FPP}
Suppose that $\cat^{\Dop}$ is endowed with the Reedy model structure on simplicial objects in the canonical model structure on $\cat$. Then the right-transferred model structure on $\dblcat$ along the horizontal nerve functor \[N^h \colon \dblcat\to \cat^{\Dop} \] 
does not exist.
\end{prop}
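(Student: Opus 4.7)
The plan is to proceed by contradiction, following Fiore--Paoli--Pronk. Suppose the right-induced model structure on $\dblcat$ along $N^h$ exists; then a morphism $f$ in $\dblcat$ is a fibration (resp.\ weak equivalence) precisely when $N^h f$ is a Reedy fibration (resp.\ level-wise equivalence of categories) in $\cat^{\Dop}$, and the generating trivial cofibrations can be chosen to be $c^h(\mathcal{J}_{\mathrm{Reedy}})$, where $c^h \dashv N^h$ and $\mathcal{J}_{\mathrm{Reedy}}$ is a set of generating trivial cofibrations of the Reedy model structure. The axiom I would target is acyclicity: every morphism in $\cof(c^h \mathcal{J}_{\mathrm{Reedy}})$ must be sent by $N^h$ to a Reedy weak equivalence.

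First I would reduce the problem to exhibiting a single element of $c^h(\mathcal{J}_{\mathrm{Reedy}})$ whose image under $N^h$ is not a Reedy weak equivalence. Next, I would examine concretely the generating trivial cofibrations of the Reedy model structure on $\cat^{\Dop}$. These involve the generating trivial cofibration $\{0\} \hookrightarrow \bI$ of the canonical model structure on $\cat$, where $\bI$ is the walking isomorphism, combined with the Reedy boundary data in the simplicial direction.

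The heart of the argument is to exploit the mismatch between $c^h$ and $N^h$ with respect to horizontal versus vertical structure in double categories. Intuitively, $c^h$ freely adjoins horizontal morphisms, and in particular it can produce horizontal isomorphisms in its output, while $N^h$ at level $n$ records strings of $n$ composable horizontal morphisms together with the vertical natural transformations between them. Choosing a Reedy generating trivial cofibration built from $\{0\} \hookrightarrow \bI$ together with a nontrivial boundary inclusion, I would expect to obtain a morphism of double categories whose horizontal nerve, evaluated at a suitable simplicial degree, fails to be essentially surjective or fully faithful on the category of $n$-strings, producing the required contradiction with acyclicity.

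The main obstacle is the explicit combinatorial verification: one needs to compute $c^h$ carefully enough on a chosen Reedy generating trivial cofibration, apply $N^h$, and identify a specific simplicial level at which the resulting functor of categories fails to be an equivalence. This is the combinatorial core of \cite[Theorem 7.22]{FPP}, and my proposal is to follow their construction directly.
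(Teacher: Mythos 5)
Your overall framing is right—the axiom that fails is acyclicity, i.e.\ that every map in $\cof(c^h\mathcal{J}_{\mathrm{Reedy}})$ is sent by $N^h$ to a level-wise equivalence—but your proposed reduction to \emph{exhibiting a single element of $c^h(\mathcal{J}_{\mathrm{Reedy}})$ whose image under $N^h$ is not a Reedy weak equivalence} is where the argument breaks. The images under $c^h$ of the Reedy generating trivial cofibrations are the three maps computed in \cref{tab:genanext} (e.g.\ $\mathbbm{1}\to\bV\bI$ and the pushout-product $\bH\mathbbm{2}\sqcup_{\mathbbm{1}\sqcup\mathbbm{1}}(\bV\bI\sqcup\bV\bI)\to\bV\bI\times\bH\mathbbm{2}$), and one can check directly that each of these \emph{is} sent by $N^h$ to a level-wise equivalence of categories; for instance $N^h(\bV\bI)$ is the constant simplicial object at $\bI$, and in $\bV\bI\times\bH\mathbbm{2}$ the two copies of the non-identity horizontal morphism are related by a vertically invertible square. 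So a counterexample cannot be found among the generating maps themselves: the failure of acyclicity only manifests after taking a \emph{pushout} of a generating map, because $c^h$-images of Reedy generating trivial cofibrations have too little room for the freely adjoined horizontal morphisms to create new, non-isomorphic composites.

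The counterexample (as in \cref{application:cat}) is a pushout of the generating map $\bH\mathbbm{2}\sqcup_{\mathbbm{1}\sqcup\mathbbm{1}}(\bV\bI\sqcup\bV\bI)\to\bV\bI\times\bH\mathbbm{2}$ along a map into the double category $\bA$ freely generated by $f\colon A\to B$, a vertical isomorphism $u\colon B\cong B'$, and $g'\colon B'\to C'$. The pushout $\bP$ freely adjoins a horizontal morphism $g\colon B\to C'$ together with a vertically invertible square relating $g$ and $g'$; the new composite $gf\colon A\to C'$ is then not vertically isomorphic to any horizontal morphism in the image of $\bA$, so $(N^h\bA)_1\to(N^h\bP)_1$ fails to be essentially surjective. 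Your proposal, as written, would not reach this: it stops at the generating maps and then defers the remaining combinatorics to \cite{FPP}. The essential missing idea is that one must test acyclicity on a well-chosen pushout, where a composable pair interleaved with a vertical isomorphism produces a genuinely new horizontal composite.
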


The failure of this model structure to exist can also be understood through an example. 

\begin{ex}
If the right-transferred model structure were to exist, the pushout-product $\bH\mathbbm{2}\sqcup_{\mathbbm{1}\sqcup\mathbbm{1}} (\bV \bI\sqcup \bV\bI)\to \bV\bI\times \bH\mathbbm{2}$ would be a trivial cofibration (see \cref{tab:genanext}), and therefore every pushout of this map would be a weak equivalence. However, this is not the case.

Given the double category $\bA$ generated by the data below left, and the pushout as below right, we show that the morphism $j\colon \bA\to\mathbb{P}$ is not sent to a weak equivalence in $\cat^{\Dop}$ by $N^h$.
\begin{tz}
\node[](1) {$A$}; 
\node[right of=1](2) {$B$}; 
\node[below of=2](2') {$B'$}; 
\node[right of=2'](3') {$C'$}; 

\draw[->] (1) to node[above,la]{$f$} (2); 
\draw[->,pro] (2) to node[left,la]{$u$} node[above,la,sloped]{$\cong$} (2'); 
\draw[->] (2') to node[below,la]{$g'$} (3');

\node[right of=2,xshift=3cm](1) {$\bH\mathbbm{2}\sqcup_{\mathbbm{1}\sqcup\mathbbm{1}} (\bV\bI\sqcup\bV\bI)$};
\node[below of=1](2) {$\bV\bI\times \bH\mathbbm{2}$}; 
\node[right of=1,xshift=2.5cm](3) {$\bA$}; 
\node[below of=3](4) {$\bP$}; 
\node at ($(4)-(.3cm,-.3cm)$) {$\ulcorner$};

\draw[->] (1) to (2); 
\draw[->] (1) to node[above,la]{$g'+(u+e_{C'})$} (3); 
\draw[->] (2) to node[below,la]{$\varphi$} (4);
\draw[->] (3) to node[right,la]{$j$} (4);
\end{tz}
The pushout $\bP$ is the double category generated by the following data.
\begin{tz}
\node[](0) {$A$}; 
\node[right of=0](1) {$B$}; 
\node[below of=1](2) {$B'$}; 
\node[right of=1](3) {$C'$}; 
\node[right of=2](4) {$C'$};

\draw[->,pro] (1) to node[left,la]{$u$} node[above,la,sloped]{$\cong$} (2); 
\draw[->] (0) to node[above,la]{$f$} (1); 
\draw[->] (1) to node[above,la]{$g$} (3); 
\draw[->] (2) to node[below,la]{$g'$} (4); 
\draw[d,pro](3) to (4); 

\node[la] at ($(1)!0.5!(4)+(7pt,0)$) {\rotatebox{270}{$\cong$}}; 
\node[la] at ($(1)!0.5!(4)-(3pt,0)$) {$\varphi$};
\end{tz}
Using this description, one can see that the functor $(N^h\bA)_1\to (N^h\bP)_1$ is not essentially surjective on objects since there are no horizontal morphisms in $\bA$ that are related by a vertically invertible square to the composite $gf$ in $\bP$. 
\end{ex}

This shows that, if there exists a model structure on $\dblcat$ whose trivial fibrations are right-transferred, it necessarily has more weak equivalences than the right-transferred ones. More specifically, we claim that the right-transferred weak equivalences are only the correct class when working between fibrant objects, and one must then enlarge this class as in \cref{def:we} to be able to consider all objects. Indeed, using \cref{charnaivefibNh} it is not hard to check that the double categories $\bA$ and $\bP$ considered above are not fibrant. In the remainder of this section, we show how we can apply \cref{thm:semirightinduced} to obtain the required model structure.

\begin{theorem} \label{thm:SRIalongnerve}
Suppose that $\cat^{\Dop}$ is endowed with the Reedy model structure on simplicial objects in the canonical model structure on $\cat$. Then the fibrantly-transferred model structure on $\dblcat$ along the horizontal nerve $N^h\colon \dblcat\to \cat^{\Dop}$ exists. 
\end{theorem}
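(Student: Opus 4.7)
The plan is to apply \cref{thm:semirightinduced} to the adjunction $c^h\dashv N^h$ with $\cM=\cat^{\Dop}$ equipped with the Reedy model structure on simplicial objects in $\cat$. Both $\dblcat$ and $\cat^{\Dop}$ are locally presentable, and the Reedy model structure is combinatorial, so it remains to verify conditions~(1) and~(2) of that theorem.

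For condition~(2), the natural candidate for a path object of a naive fibrant double category $\bA$ is $\Path\bA:=\llbracket\bV\bI,\bA\rrbracket$, where $\bV\bI$ denotes the vertical embedding of the walking isomorphism $\bI\in\cat$. Unwinding the strict vertical naturality condition for double functors out of $\bV\bI$ yields a natural isomorphism $\bfV\llbracket\bV\bI,\bD\rrbracket\cong(\bfV\bD)^{\bI}$, and combining this with the cartesian closure of $\dblcat$ gives
\[ N^h(\Path\bA)_m \;=\; \bfV\llbracket\bH[m]\times\bV\bI,\bA\rrbracket \;\cong\; \bigl(\bfV\llbracket\bH[m],\bA\rrbracket\bigr)^{\bI} \;=\; (N^h\bA)_m^{\bI}. \]
Thus $N^h\Path\bA$ is the levelwise cotensor of $N^h\bA$ with the walking isomorphism. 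Whenever $N^h\bA$ is Reedy fibrant, the induced factorization of the diagonal $\bA\to\Path\bA\to\bA\times\bA$ is sent by $N^h$ to the standard Reedy path object factorization---a levelwise equivalence of categories followed by a Reedy fibration---which verifies condition~(2).

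For condition~(1), I need to show that any $f\colon\bA\to\bB$ with $N^hf$ a Reedy trivial fibration admits a naive fibrant replacement $\hat f$ with $N^h\hat f$ a Reedy weak equivalence. Forming naive fibrant replacements $\iota_\bA,\iota_\bB$ and lifting to $\hat f\colon\hat\bA\to\hat\bB$, two-out-of-three reduces the task to showing that $N^h\iota$ is a Reedy weak equivalence for every anodyne extension $\iota\in\cof(c^h\JM)$. Taking the generating Reedy trivial cofibrations to be the pushout-product maps $\partial\Delta[n]\times\bI\sqcup_{\partial\Delta[n]}\Delta[n]\to\Delta[n]\times\bI$, and using that $c^h$ is a left adjoint with $c^h(\Delta[n]\times\bI)\cong\bH[n]\times\bV\bI$, the generating anodyne extensions take the form
\[ \partial\bH[n]\times\bV\bI\sqcup_{\partial\bH[n]}\bH[n]\;\longrightarrow\;\bH[n]\times\bV\bI. \]
A direct computation shows $N^h(\bH[n]\times\bV\bI)\cong\Delta[n]\times\bI$; moreover both source and target of each generator satisfy a Segal-type condition, so that the unit $\eta$ of $c^h\dashv N^h$ is an isomorphism there. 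Hence $N^h$ sends each generating anodyne extension to a Reedy weak equivalence.

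The main obstacle I expect is in propagating this from the generating set to all of $\cof(c^h\JM)$: since $N^h$ is a right adjoint, it need not preserve the pushouts and transfinite compositions used in the small object argument. To address this, I plan to analyze the naive fibrant replacement stage by stage and show inductively that the pushouts arising in the construction remain compatible with $N^h$ (for instance, because their $N^h$-image is again Segal-like), so that $N^h$ of each step is a Reedy weak equivalence. Once this is in place, \cref{thm:semirightinduced} yields the desired fibrantly-induced model structure on $\dblcat$.
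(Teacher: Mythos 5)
Your treatment of condition~(2) is essentially the paper's: the same path object $\llbracket\bV\bI,\bA\rrbracket$, the same identification of its horizontal nerve as a cotensor (\cref{nervevsinthom}), and the same appeal to the $\cat$-enrichment of the Reedy model structure to conclude (\cref{prop:pathobjdblcat}). That part is fine.

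Your argument for condition~(1), however, has a fatal gap. The 2-out-of-3 reduction you propose requires that $N^h$ send the anodyne extensions $\iota_\bA,\iota_\bB$ to Reedy weak equivalences, and this is \emph{false}: the paper's own counterexample (the Example following the restatement of \cite[Theorem 7.22]{FPP}) exhibits a pushout $j\colon\bA\to\bP$ of the generating anodyne extension $\bH\mathbbm{2}\sqcup_{\mathbbm{1}\sqcup\mathbbm{1}}(\bV\bI\sqcup\bV\bI)\to\bV\bI\times\bH\mathbbm{2}$ of \cref{tab:genanext} such that $(N^h\bA)_1\to(N^h\bP)_1$ is not essentially surjective. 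This is precisely the obstruction that prevents the right-induced model structure from existing, so no inductive ``stage-by-stage'' repair can succeed: a generic naive fibrant replacement is built from exactly such pushouts, and there is no Segal-type condition forcing them to be preserved. (Indeed, anodyne extensions between non-fibrant objects lie in $\W$ but generally not in $(N^h)^{-1}(\weM)$; the two classes only agree between naive fibrant objects.)

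The paper's route to condition~(1) (\cref{trivfibarewedblcat}) avoids this entirely. Given a trivial fibration $F\colon\bA\to\bB$, one does not try to control $N^h\iota_\bB$; instead one pushes $F$ out along $\iota_\bA\colon\bA\to\bA^{\fibrant}$ and shows two things: the pushout $F'\colon\bA^{\fibrant}\to\bP$ of a trivial fibration along a cofibration is again a trivial fibration (\cref{lem:pushoftrivfib}, which rests on the explicit section/deformation-retraction characterization of trivial fibrations in \cref{chartrivfib}(iii) and its extension along cofibrations in \cref{technicallemma}), and a trivial fibration with naive fibrant domain has naive fibrant codomain (\cref{trivfibfromfibrant}). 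The resulting square is then a naive fibrant replacement of $F$ whose bottom map $F'$ is a trivial fibration between naive fibrant objects, hence is sent by $N^h$ to a Reedy trivial fibration; by the definition of $\W$ (and \cref{lemma:wefr}) this already puts $F$ in $\W$. You would need to supply an argument of this kind — producing one good fibrant replacement of $F$ rather than showing all fibrant replacement maps are $N^h$-equivalences — for your proof to go through.
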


Note that this is not a significant change for the purposes of the homotopy theory we intended to model through the right adjoint $N^h \colon \dblcat\to \cat^{\Dop}$, as the fibrant objects and weak equivalences between them determine the homotopy category of a model structure. Hence, although some of the classes of maps must change in order for the desired model structure to exist, the proposed homotopy theory remains the same.

\subsection{Cofibrations and anodyne extensions}

We first wish to compute the generating cofibrations and generating anodyne extensions of the fibrantly-transferred model structure. These sets are given by the image of sets of generating cofibrations and trivial cofibrations in $\cat^{\Dop}$ under the left adjoint $c^h\colon \cat^{\Dop}\to \dblcat$ of the horizontal nerve. 

\begin{rem}
Recall that a set of generating cofibrations for the canonical model structure on $\cat$ consists of the following functors:
\begin{rome}
    \item the unique functor $\emptyset\to \mathbbm{1}$, 
    \item the inclusion $\mathbbm{1}\sqcup\mathbbm{1}\to \mathbbm{2}$, 
    \item the functor $\mathbbm{2}\sqcup_{\mathbbm{1}\sqcup\mathbbm{1}} \mathbbm{2}=\{ \boldsymbol{\cdot}\rightrightarrows\boldsymbol{\cdot}\}\to \mathbbm{2}$ sending the two parallel morphisms to the non-trivial morphism in $\mathbbm{2}$. 
\end{rome}
A set of generating trivial cofibrations for the canonical model structure on $\cat$ consists of the single inclusion functor
 \[ \mathbbm{1}\to \bI=\{ \boldsymbol{\cdot}\cong \boldsymbol{\cdot}\} \]
sending the object to one of the end-points of the free-living isomorphism. 

Then, by \cite[Theorem 15.6.27]{Hirsch}, a set of generating (trivial) cofibrations for the Reedy model structure on $\cat^{\Dop}$ is given by the pushout-product morphisms
\[ \cC\times \Delta[n]\sqcup_{\cC\times \partial\Delta[n]} \cD\times \partial\Delta[n]\to \cD\times \Delta[n] \]
with $\cC\to \cD$ a generating (trivial) cofibration in $\cat$ and $n\geq 0$.
\end{rem}

Write $c\colon \Set^{\Dop}\to \cat$ for the left adjoint of the standard nerve $N\colon \cat\to \Set^{\Dop}$.

\begin{lem}\cite[Proposition 6.11]{FPP} \label{lem:computec}
If $\cC\in \cat\subseteq \cat^{\Dop}$ and $X\in \Set^{\Dop}\subseteq \cat^{\Dop}$, then there is a natural isomorphism in $\dblcat$
\[ c^h(\cC\times X)\cong \bV\cC\times \bH c X. \]
\end{lem}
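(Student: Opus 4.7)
My plan is to apply the Yoneda lemma in $\dblcat$, reducing the desired identification to a chain of natural bijections on hom-sets, and to handle general $X$ via a colimit argument.

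First, observe that $c^h$ is a left adjoint and hence preserves colimits; similarly $\bH$ and $c$ are left adjoints. Both $\cat^{\Dop}$ and $\dblcat$ are cartesian closed, so the functors $\cC \times -$ and $\bV\cC \times -$ preserve colimits. Writing $X=\colim_{\Delta[n]\to X}\Delta[n]$ as the canonical colimit of representables in $\Set^{\Dop}\subseteq\cat^{\Dop}$, both sides of the desired isomorphism decompose as colimits of their values at the $\Delta[n]$ appearing in the diagram (noting that $\bH c\Delta[n]\cong\bH[n]$). So it suffices to produce a natural isomorphism $c^h(\cC\times\Delta[n])\cong\bV\cC\times\bH[n]$.

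For this representable case, I would verify, for every double category $\bA$, the chain of natural bijections
\[
\dblcat(c^h(\cC \times \Delta[n]), \bA) \cong \cat^{\Dop}(\cC \times \Delta[n], N^h\bA) \cong \cat(\cC, (N^h\bA)_n) \cong \dblcat(\bV\cC\times \bH[n], \bA),
\]
and then conclude by the Yoneda lemma. The first step is the adjunction $c^h\dashv N^h$. For the second, since products in $\cat^{\Dop}$ are computed levelwise, we have $(\cC\times\Delta[n])_k=\bigsqcup_{\sigma\colon[k]\to[n]}\cC$ (as $\Delta[n]_k$ is a discrete category); the simplicial naturality conditions then force a morphism out of $\cC\times\Delta[n]$ to be determined by its component at $\mathrm{id}_{[n]}\in\Delta[n]_n$, yielding a functor $\cC\to(N^h\bA)_n$. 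The third step uses the definition $(N^h\bA)_n=\bfV\llbracket\bH[n],\bA\rrbracket$, the adjunction $\bV\dashv\bfV$, and the cartesian closed structure on $\dblcat$.

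The main technical step is the middle isomorphism, where one must carefully unpack the simplicial compatibility of a natural transformation out of $\cC\times\Delta[n]$ to see that all of its data is recovered from the component at the top non-degenerate simplex. Once this is handled, the remainder of the argument is a formal assembly of adjunctions and colimit preservations, and the naturality in $\cC$ and $X$ follows immediately since every isomorphism in the chain is natural.
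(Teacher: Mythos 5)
Your argument is correct, but note that the paper does not actually prove this lemma: it is quoted verbatim from Fiore--Paoli--Pronk \cite[Proposition 6.11]{FPP}, so there is no in-text proof to compare against. Your self-contained derivation is a clean and standard one: the middle bijection is exactly the ($\cat$-enriched) Yoneda/copower identity $\cat^{\Dop}(\cC\times\Delta[n],\cY)\cong\cat(\cC,\cY_n)$, which you correctly unpack by observing that $(\cC\times\Delta[n])_k\cong\bigsqcup_{\sigma\colon[k]\to[n]}\cC$ and that simplicial naturality forces every component to be $\cY(\sigma)$ applied to the component at $\id_{[n]}$; combined with $c^h\dashv N^h$, the definition $(N^h\bA)_n=\bfV\llbracket\bH[n],\bA\rrbracket$, the adjunction $\bV\dashv\bfV$, and cartesian closedness of $\dblcat$, Yoneda gives the representable case. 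Two small points are worth making explicit. First, to write $X=\colim_{\Delta[n]\to X}\Delta[n]$ \emph{in} $\cat^{\Dop}$ you need the inclusion $\Set^{\Dop}\subseteq\cat^{\Dop}$ to preserve colimits; this holds because the discrete-category functor $\Set\to\cat$ is left adjoint to the objects functor, hence cocontinuous levelwise. Second, you should record that $c\Delta[n]\cong[n]$ (the left adjoint of the nerve on a representable), which is what makes the right-hand side of your representable identification agree with $\bV\cC\times\bH c\Delta[n]$. With these noted, the reduction to representables and the naturality claims are all justified, and the proof goes through.
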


\begin{lem}
    For $\cC\to \cD$ a functor in $\cat$ and $n\geq 0$, the functor $c^h\colon \cat^{\Dop}\to \dblcat$ sends the pushout-product 
\[ \cC\times \Delta[n]\sqcup_{\cC\times \partial\Delta[n]} \cD\times \partial\Delta[n]\to \cD\times \Delta[n] \]
of $\cC\to \cD$ with $\partial\Delta[n]\to \Delta[n]$ to the pushout-product 
\[ \bV\cC\times \bH c (\Delta[n])\sqcup_{\bV\cC\times \bH c (\partial\Delta[n])} \bV\cD\times \bH c(\partial\Delta[n])\to \bV\cD\times \bH c(\Delta[n]) \]
of $\bV(\cC\to \cD)$ with $\bH c(\partial\Delta[n]\to \Delta[n])$.
\end{lem}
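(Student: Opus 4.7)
The plan is to exploit two standard ingredients: the fact that $c^h$ is a left adjoint (so it preserves all colimits, in particular pushouts), and the explicit formula for $c^h$ on products of the form $\cC\times X$ provided by \cref{lem:computec}.

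First, I would recall that the pushout-product in question is, by definition, the canonical morphism from the pushout of the span
\[
\cC\times \Delta[n]\longleftarrow \cC\times \partial\Delta[n] \longrightarrow \cD\times \partial\Delta[n]
\]
to $\cD\times \Delta[n]$ in $\cat^{\Dop}$, induced by $\cC\to\cD$ and $\partial\Delta[n]\to\Delta[n]$. Since $c^h$ is a left adjoint, it preserves this pushout, so $c^h$ applied to the pushout-product equals the canonical morphism from the pushout of the $c^h$-image of the above span to $c^h(\cD\times \Delta[n])$.

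Next, I would evaluate $c^h$ term by term. Each of the four objects appearing has the form $\cE\times Y$ with $\cE\in\{\cC,\cD\}\subseteq\cat\subseteq \cat^{\Dop}$ and $Y\in\{\partial\Delta[n],\Delta[n]\}\subseteq \Set^{\Dop}\subseteq \cat^{\Dop}$. Applying \cref{lem:computec} to each yields natural isomorphisms
\[
c^h(\cE\times Y)\cong \bV\cE\times \bH c(Y),
\]
and the naturality in both variables guarantees that the maps in the span and the map to $c^h(\cD\times \Delta[n])$ are identified under these isomorphisms with the corresponding maps in the span
\[
\bV\cC\times \bH c(\Delta[n])\longleftarrow \bV\cC\times \bH c(\partial\Delta[n])\longrightarrow \bV\cD\times \bH c(\partial\Delta[n])
\]
and the induced map to $\bV\cD\times \bH c(\Delta[n])$.

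Finally, assembling the pieces, the image of the pushout-product under $c^h$ is precisely the pushout-product of $\bV(\cC\to \cD)$ with $\bH c(\partial\Delta[n]\to \Delta[n])$, as claimed. There is no real obstacle here: the only thing to verify is the compatibility of the natural isomorphisms from \cref{lem:computec} with the structure maps of the pushout diagram, which is immediate from their naturality statement.
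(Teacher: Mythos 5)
Your proposal is correct and follows exactly the argument of the paper's (very terse) proof: use that $c^h$ is a left adjoint to commute it past the pushout, then identify each corner via the natural isomorphism $c^h(\cE\times Y)\cong \bV\cE\times \bH c(Y)$ of \cref{lem:computec}. The only thing you add is the explicit check that naturality makes the identifications compatible with the structure maps, which is a reasonable elaboration of what the paper leaves implicit.
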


\begin{proof}
    This follows from the fact that $c^h$ commutes with colimits, and \cref{lem:computec}.
\end{proof}

We can now compute the generating cofibrations and generating anodyne extensions of the model structure on $\dblcat$. Since $c(\partial\Delta[n]\to \Delta[n])$ is the identity for $n\geq 3$ we only need to compute the pushout-products when $n\leq 2$. In these cases, we have that the functor $c(\partial\Delta[n]\to \Delta[n])$ 
\begin{itemize}
    \item when $n=0$, is given by the unique functor $\emptyset\to \mathbbm{1}$, 
    \item when $n=1$, is given by the inclusion functor $\mathbbm{1}\sqcup\mathbbm{1}\to \mathbbm{2}$, 
    \item when $n=2$, can be replaced by the functor $\mathbbm{2}\sqcup_{\mathbbm{1}\sqcup\mathbbm{1}} \mathbbm{2}=\{\boldsymbol{\cdot}\rightrightarrows\boldsymbol{\cdot}\}\to \mathbbm{2}$. Indeed, we have that $\{\boldsymbol{\cdot}\rightrightarrows\boldsymbol{\cdot}\}\to \mathbbm{2}$ is a retract of $c(\partial\Delta[2]\to \Delta[2])$, and that $c(\partial\Delta[2]\to \Delta[2])$ is a pushout of $\{\boldsymbol{\cdot}\rightrightarrows\boldsymbol{\cdot}\}\to \mathbbm{2}$.
\end{itemize}

Hence the generating cofibrations and generating anodyne extensions can be computed as in \cref{tab:gencof,tab:genanext}, where we depict the pushout-products of the morphisms in the first rows and columns.

\begin{table}[ht]
    \centering
    \begin{tabularx}{0.98\textwidth}{c|c|c|c}
    $\Set^{\Dop} \backslash \cat$ & $\emptyset\to \mathbbm{1}$ & $\mathbbm{1}\sqcup\mathbbm{1}\to \bV\mathbbm{2}$ & $\bV(\boldsymbol{\cdot} \rightrightarrows \boldsymbol{\cdot})\to \bV\mathbbm{2}$ \\ \hline 
   $\emptyset\to \mathbbm{1}$ & \parbox[c]{1.35cm}{\begin{tikzpicture}[node distance=1cm]
    \node[](1) {}; 
    \node[right of=1](2) {$\boldsymbol{\cdot}$}; 
    \node at ($(1)!0.5!(2)$) {$\to$};
    \end{tikzpicture}} & \parbox[c]{1.35cm}{\begin{tikzpicture}[node distance=1cm]
    \node[](1) {$\boldsymbol{\cdot}$};
    \node[below of=1](2) {$\boldsymbol{\cdot}$}; 
    \node[right of=1](1) {$\boldsymbol{\cdot}$};
    \node at ($(1)!0.5!(2)$) {$\to$};
    \node[below of=1](2) {$\boldsymbol{\cdot}$}; 
    \draw[->,prosmall] (1) to (2);
    \end{tikzpicture}}& \parbox[c]{1.7cm}{\begin{tikzpicture}[node distance=1cm]
    \node[](1) {$\boldsymbol{\cdot}$};
    \node[below of=1](2) {$\boldsymbol{\cdot}$}; 
    \draw[->,prosmall] ($(1.south)-(3pt,0)$) to ($(2.north)-(3pt,0)$);
    \draw[->,prosmall] ($(1.south)+(3pt,0)$) to ($(2.north)+(3pt,0)$);
    \node[right of=1,xshift=.3cm](1) {$\boldsymbol{\cdot}$};
    \node at ($(1)!0.5!(2)$) {$\to$};
    \node[below of=1](2) {$\boldsymbol{\cdot}$}; 
    \draw[->,prosmall] (1) to (2);
    \end{tikzpicture}} \\ \hline
    $\mathbbm{1}\sqcup\mathbbm{1}\to \bH\mathbbm{2}$ & \parbox[c]{3.4cm}{\begin{tikzpicture}[node distance=1cm]
    \node[](1) {$\boldsymbol{\cdot}$};
    \node[right of=1](2) {$\boldsymbol{\cdot}$}; 
    \node[right of=2](1) {$\boldsymbol{\cdot}$};
    \node at ($(1)!0.5!(2)$) {$\to$};
    \node[right of=1](2) {$\boldsymbol{\cdot}$}; 
    \draw[->] (1) to (2);
    \end{tikzpicture}} & \parbox[c]{3.4cm}{\begin{tikzpicture}[node distance=1cm]
    \node[](1) {$\boldsymbol{\cdot}$}; 
    \node[right of=1](2) {$\boldsymbol{\cdot}$};
    \node[below of=1](1') {$\boldsymbol{\cdot}$};
    \node[below of=2](2') {$\boldsymbol{\cdot}$};
    \draw[->] (1) to (2);
    \draw[->] (1') to (2');
    \draw[->,prosmall] (1) to (1');
    \draw[->,prosmall] (2) to (2');
    \node[right of=2](1) {$\boldsymbol{\cdot}$}; 
    \node at ($(1)!0.5!(2')$) {$\to$}; 
    \node[right of=1](2) {$\boldsymbol{\cdot}$};
    \node[below of=1](1') {$\boldsymbol{\cdot}$};
    \node[below of=2](2') {$\boldsymbol{\cdot}$};
    \draw[->] (1) to (2);
    \draw[->] (1') to (2');
    \draw[->,prosmall] (1) to (1');
    \draw[->,prosmall] (2) to (2');
    \node[la] at ($(1)!0.5!(2')$) {\rotatebox{270}{$\Rightarrow$}};
    \end{tikzpicture}} & \parbox[c]{3.7cm}{\begin{tikzpicture}[node distance=1cm]
    \node[](1) {$\boldsymbol{\cdot}$}; 
    \node[right of=1](2) {$\boldsymbol{\cdot}$};
    \node[below of=1](1') {$\boldsymbol{\cdot}$};
    \node[below of=2](2') {$\boldsymbol{\cdot}$};
    \draw[->] (1) to (2);
    \draw[->] (1') to (2');
    \draw[->,prosmall] ($(1.south)-(3pt,0)$) to ($(1'.north)-(3pt,0)$);
    \draw[->,prosmall] ($(1.south)+(3pt,0)$) to ($(1'.north)+(3pt,0)$);
    \draw[->,prosmall] ($(2.south)-(3pt,0)$) to ($(2'.north)-(3pt,0)$);
    \draw[->,prosmall] ($(2.south)+(3pt,0)$) to ($(2'.north)+(3pt,0)$);
    \node[la] at ($(1)!0.5!(2')-(3.5pt,0)$) {\rotatebox{270}{$\Rightarrow$}};
    \node[la] at ($(1)!0.5!(2')+(3.5pt,0)$) {\rotatebox{270}{$\Rightarrow$}};
    \node[right of=2,xshift=.3cm](1) {$\boldsymbol{\cdot}$}; 
    \node at ($(1)!0.5!(2')$) {$\to$}; 
    \node[right of=1](2) {$\boldsymbol{\cdot}$};
    \node[below of=1](1') {$\boldsymbol{\cdot}$};
    \node[below of=2](2') {$\boldsymbol{\cdot}$};
    \draw[->] (1) to (2);
    \draw[->] (1') to (2');
    \draw[->,prosmall] (1) to (1');
    \draw[->,prosmall] (2) to (2');
    \node[la] at ($(1)!0.5!(2')$) {\rotatebox{270}{$\Rightarrow$}};
    \end{tikzpicture}}  \\ \hline
    $\bH(\boldsymbol{\cdot} \rightrightarrows \boldsymbol{\cdot})\to \bH\mathbbm{2}$ & \parbox[c]{3.4cm}{\begin{tikzpicture}[node distance=1cm]
    \node[](1) {$\boldsymbol{\cdot}$};
    \node[right of=1](2) {$\boldsymbol{\cdot}$}; 
    \draw[->] ($(1.east)-(0,3pt)$) to ($(2.west)-(0,3pt)$);
    \draw[->] ($(1.east)+(0,3pt)$) to ($(2.west)+(0,3pt)$);
    \node[right of=2](1) {$\boldsymbol{\cdot}$};
    \node at ($(1)!0.5!(2)$) {$\to$};
    \node[right of=1](2) {$\boldsymbol{\cdot}$}; 
    \draw[->] (1) to (2);
    \end{tikzpicture}} & \parbox[c]{3.4cm}{\begin{tikzpicture}[node distance=1cm]
    \node[](1) {$\boldsymbol{\cdot}$}; 
    \node[above of=1, yshift=-.8cm] {};
    \node[right of=1](2) {$\boldsymbol{\cdot}$};
    \node[below of=1](1') {$\boldsymbol{\cdot}$};
    \node[below of=2](2') {$\boldsymbol{\cdot}$};
    \node[below of=1', yshift=.8cm] {};
    \draw[->] ($(1.east)-(0,3pt)$) to ($(2.west)-(0,3pt)$);
    \draw[->] ($(1.east)+(0,3pt)$) to ($(2.west)+(0,3pt)$);
    \draw[->] ($(1'.east)-(0,3pt)$) to ($(2'.west)-(0,3pt)$);
    \draw[->] ($(1'.east)+(0,3pt)$) to ($(2'.west)+(0,3pt)$);
    \draw[->,prosmall] (1) to (1');
    \draw[->,prosmall] (2) to (2');
    \node[la] at ($(1)!0.5!(2')-(-3pt,2pt)$) {\rotatebox{270}{$\Rightarrow$}};
    \node[la] at ($(1)!0.5!(2')+(-3pt,2pt)$) {\rotatebox{270}{$\Rightarrow$}};
    \node[right of=2](1) {$\boldsymbol{\cdot}$}; 
    \node at ($(1)!0.5!(2')$) {$\to$}; 
    \node[right of=1](2) {$\boldsymbol{\cdot}$};
    \node[below of=1](1') {$\boldsymbol{\cdot}$};
    \node[below of=2](2') {$\boldsymbol{\cdot}$};
    \draw[->] (1) to (2);
    \draw[->] (1') to (2');
    \draw[->,prosmall] (1) to (1');
    \draw[->,prosmall] (2) to (2');
    \node[la] at ($(1)!0.5!(2')$) {\rotatebox{270}{$\Rightarrow$}};
    \end{tikzpicture}} & identity \\ \hline 
    \end{tabularx}
    \vspace{.1cm}
    \caption{Generating cofibrations}
    \label{tab:gencof}
\end{table}

\begin{table}[ht]
    \centering
    \begin{tabularx}{0.52\textwidth}{c|c}
    $\Set^{\Dop} \backslash \cat$ & $\mathbbm{1}\to \bV\bI$ \\ \hline 
   $\emptyset\to \mathbbm{1}$ & \parbox[c]{1.43cm}{\begin{tikzpicture}[node distance=1cm]
    \node[](1) {$\boldsymbol{\cdot}$};
    \node[right of=1,xshift=.5cm,yshift=1cm](1') {$\boldsymbol{\cdot}$};
    \node[below of=1'](2) {$\boldsymbol{\cdot}$}; 
    \draw[->,prosmall] (1') to node(a)[above,lasmall,sloped]{$\cong$} (2);
    \node at ($(1)!0.5!(1')$) {$\to$};
    \end{tikzpicture}} \\ \hline
    $\mathbbm{1}\sqcup\mathbbm{1}\to \bH\mathbbm{2}$ & \parbox[c]{3.9cm}{\begin{tikzpicture}[node distance=1cm]
    \node[](1) {$\boldsymbol{\cdot}$}; 
    \node[right of=1](2) {$\boldsymbol{\cdot}$};
    \node[below of=1](1') {$\boldsymbol{\cdot}$};
    \node[below of=2](2') {$\boldsymbol{\cdot}$};
    \draw[->] (1') to (2');
    \draw[->,prosmall] (1) to node[below,lasmall,sloped]{$\cong$} (1');
    \draw[->,prosmall] (2) to node[above,lasmall,sloped]{$\cong$} (2');
    \node[right of=2,xshift=.5cm](1) {$\boldsymbol{\cdot}$}; 
    \node at ($(1)!0.5!(2')$) {$\to$}; 
    \node[right of=1](2) {$\boldsymbol{\cdot}$};
    \node[below of=1](1') {$\boldsymbol{\cdot}$};
    \node[below of=2](2') {$\boldsymbol{\cdot}$};
    \draw[->] (1) to (2);
    \draw[->] (1') to (2');
    \draw[->,prosmall] (1) to node[below,lasmall,sloped]{$\cong$} (1');
    \draw[->,prosmall] (2) to node[above,lasmall,sloped]{$\cong$} (2');
    \node[lasmall] at ($(1)!0.5!(2')$) {\rotatebox{270}{$\cong$}};
    \end{tikzpicture}}  \\ \hline
    $\bH(\boldsymbol{\cdot} \rightrightarrows \boldsymbol{\cdot})\to \bH\mathbbm{2}$ & \parbox[c]{3.9cm}{\begin{tikzpicture}[node distance=1cm]
    \node[](1) {$\boldsymbol{\cdot}$}; 
    \node[above of=1, yshift=-.8cm] {};
    \node[right of=1](2) {$\boldsymbol{\cdot}$};
    \node[below of=1](1') {$\boldsymbol{\cdot}$};
    \node[below of=2](2') {$\boldsymbol{\cdot}$};
    \draw[->] ($(1.east)-(0,3pt)$) to ($(2.west)-(0,3pt)$);
    \draw[->] ($(1.east)+(0,3pt)$) to ($(2.west)+(0,3pt)$);
    \draw[->] (1') to (2');
    \draw[->,prosmall] (1) to node[below,lasmall,sloped]{$\cong$} (1');
    \draw[->,prosmall] (2) to node[above,lasmall,sloped]{$\cong$} (2');
    \node[lasmall] at ($(1)!0.5!(2')-(-3.5pt,2pt)$) {\rotatebox{270}{$\cong$}};
    \node[lasmall] at ($(1)!0.5!(2')+(-3.5pt,2pt)$) {\rotatebox{270}{$\cong$}};
    \node[right of=2,xshift=.5cm](1) {$\boldsymbol{\cdot}$}; 
    \node at ($(1)!0.5!(2')$) {$\to$}; 
    \node[right of=1](2) {$\boldsymbol{\cdot}$};
    \node[below of=1](1') {$\boldsymbol{\cdot}$};
    \node[below of=2](2') {$\boldsymbol{\cdot}$};
    \draw[->] (1) to (2);
    \draw[->] (1') to (2');
    \draw[->,prosmall] (1) to node[below,lasmall,sloped]{$\cong$} (1');
    \draw[->,prosmall] (2) to node[above,lasmall,sloped]{$\cong$} (2');
    \node[lasmall] at ($(1)!0.5!(2')$) {\rotatebox{270}{$\cong$}};
    \end{tikzpicture}} \\ \hline
    \end{tabularx}
    \caption{Generating anodyne extensions}
    \label{tab:genanext}
\end{table}

\begin{rem} \label{lastgencof}
In \cref{tab:gencof}, we can replace the pushout-products of the double functors $\mathbbm{1}\sqcup\mathbbm{1}\to \bV\mathbbm{2}$ with $\bH(\boldsymbol{\cdot}\rightrightarrows\boldsymbol{\cdot})\to \bH\mathbbm{2}$ and of $\bV(\boldsymbol{\cdot}\rightrightarrows\boldsymbol{\cdot})\to \bV\mathbbm{2}$ with $\mathbbm{1}\sqcup\mathbbm{1}\to \bH\mathbbm{2}$ by just one double functor $\bV\mathbbm{2}\times \bH\mathbbm{2} \sqcup_{\partial(\bV\mathbbm{2}\times\bH\mathbbm{2})} \bV\mathbbm{2}\times \bH\mathbbm{2} \to \bV\mathbbm{2}\times \bH\mathbbm{2}$ which can be pictured as
\begin{tz}[node distance=1cm]
    \node[](1) {$\boldsymbol{\cdot}$}; 
    \node[right of=1](2) {$\boldsymbol{\cdot}$};
    \node[below of=1](1') {$\boldsymbol{\cdot}$};
    \node[below of=2](2') {$\boldsymbol{\cdot}$};
    \draw[->] (1) to (2);
    \draw[->] (1') to (2');
    \draw[->,prosmall] (1) to (1');
    \draw[->,prosmall] (2) to (2');
    \node[la] at ($(1)!0.5!(2')-(-3.5pt,0)$) {\rotatebox{270}{$\Rightarrow$}};
    \node[la] at ($(1)!0.5!(2')+(-3.5pt,0)$) {\rotatebox{270}{$\Rightarrow$}};
    \node[right of=2](1) {$\boldsymbol{\cdot}$}; 
    \node at ($(1)!0.5!(2')$) {$\to$}; 
    \node[right of=1](2) {$\boldsymbol{\cdot}$};
    \node[below of=1](1') {$\boldsymbol{\cdot}$};
    \node[below of=2](2') {$\boldsymbol{\cdot}$};
    \draw[->] (1) to (2);
    \draw[->] (1') to (2');
    \draw[->,prosmall] (1) to (1');
    \draw[->,prosmall] (2) to (2');
    \node[la] at ($(1)!0.5!(2')$) {\rotatebox{270}{$\Rightarrow$}};
    \end{tz}
Indeed, the above double functor can be seen as a pushout of either of the pushout-products, and the pushout-products can be seen as an iterated pushout of the double functors $\bV(\boldsymbol{\cdot}\rightrightarrows\boldsymbol{\cdot})\to\bV\mathbbm{2}$ and $\bH(\boldsymbol{\cdot}\rightrightarrows\boldsymbol{\cdot})\to\bH\mathbbm{2}$, respectively, as well as the above double functor. 
\end{rem}

By a similar argument to the proof of M4 in \cite[Theorem 3.1]{rezk}, we can use \cref{tab:gencof,lastgencof} to get the following characterization of the cofibrations in $\dblcat$.

\begin{prop} \label{charcofib}
A double functor is a cofibration if and only if it is injective on objects. In particular, every double category is cofibrant. 
\end{prop}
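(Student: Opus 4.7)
The argument proceeds by establishing both implications separately, following the template of \cite[Theorem 3.1]{rezk}.

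For the implication cofibration $\Rightarrow$ injective on objects, I would inspect the generating cofibrations listed in \cref{tab:gencof}, together with the simplification in \cref{lastgencof}. Each of the seven generators is immediately seen to be injective on objects: the cofibration $\emptyset\to\mathbbm{1}$ adds a new object; the cofibrations $\mathbbm{1}\sqcup\mathbbm{1}\to\bH\mathbbm{2}$ and $\mathbbm{1}\sqcup\mathbbm{1}\to\bV\mathbbm{2}$ preserve both objects distinctly; the ``identification'' cofibrations $\bH(\boldsymbol{\cdot}\rightrightarrows\boldsymbol{\cdot})\to\bH\mathbbm{2}$, $\bV(\boldsymbol{\cdot}\rightrightarrows\boldsymbol{\cdot})\to\bV\mathbbm{2}$, and the analogous square identification are bijective on objects; and the boundary inclusion $\partial(\bV\mathbbm{2}\times\bH\mathbbm{2})\to\bV\mathbbm{2}\times\bH\mathbbm{2}$ is bijective on objects as well. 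Since the class of double functors injective on objects is closed under pushouts, transfinite compositions, coproducts, and retracts, it contains the entire saturation $\cof(\I)$.

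For the converse, given a double functor $F\colon\bA\to\bB$ injective on objects, the strategy is to exhibit $F$ as a transfinite composite of pushouts of generating cofibrations. I would first factor $F$ as $\bA\xrightarrow{j}\bA'\xrightarrow{F'}\bB$, where $j$ is a (possibly transfinite) pushout of coproducts of the identification-type generating cofibrations, chosen so that $F'$ becomes injective on all four layers of data, namely objects, horizontal morphisms, vertical morphisms, and squares. I would then build $F'$ as a transfinite composite of pushouts: first freely adjoin the missing objects of $\bB$ along $\emptyset\to\mathbbm{1}$, then the missing horizontal and vertical morphisms along $\mathbbm{1}\sqcup\mathbbm{1}\to\bH\mathbbm{2}$ and $\mathbbm{1}\sqcup\mathbbm{1}\to\bV\mathbbm{2}$, and finally the missing squares along $\partial(\bV\mathbbm{2}\times\bH\mathbbm{2})\to\bV\mathbbm{2}\times\bH\mathbbm{2}$. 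Because $F'$ is injective at every layer, no unwanted collisions occur during this process and the construction stabilizes to produce exactly $\bB$. Since $F$ is then a composition of cofibrations it is itself a cofibration, and the second assertion of the proposition follows by specializing to $\bA=\emptyset$.

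The main technical delicacy lies in the first factorization $\bA\to\bA'$: identifying parallel morphisms can force further identifications among squares whose boundaries have just been identified, so a careful transfinite iteration (alternating between hor/vert morphism identifications and square identifications) is required. The hypothesis that $F$ is injective on objects is exactly what keeps this iteration coherent, preventing any conflict between the identifications forced by $F$ and the remaining double-category structure, so that the resulting $\bA'$ embeds injectively into $\bB$ on all layers.
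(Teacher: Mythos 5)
Your forward implication (cofibration $\Rightarrow$ injective on objects) is correct and agrees with the intended argument: the generators of \cref{tab:gencof} together with \cref{lastgencof} are all injective on objects, and since $\ob\colon\dblcat\to\Set$ preserves colimits, the class of object-injective double functors is closed under pushout, transfinite composition, and retract. For the converse, however, you take a genuinely different route from the paper, and it is there that your argument has a gap. The paper follows the proof of M4 in Rezk: it characterizes $\inj(\I)$ as the double functors that are surjective on objects and fully faithful on horizontal morphisms, vertical morphisms, and squares (\cref{chartrivfib}), and then shows directly that an object-injective double functor has the left lifting property against every such map, by choosing the lift on objects (using surjectivity of the trivial fibration and injectivity on objects of the candidate cofibration) and observing that the lift on all remaining data is then uniquely determined by fully faithfulness. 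Your proposal instead tries to exhibit every object-injective map as a relative $\I$-cell complex, which is a strictly stronger statement and considerably more delicate.

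The gap is in your second phase. After you have arranged $F'\colon\bA'\to\bB$ to be injective on all four layers and you freely adjoin the missing horizontal and vertical morphisms along $\mathbbm{1}\sqcup\mathbbm{1}\to\bH\mathbbm{2}$ and $\mathbbm{1}\sqcup\mathbbm{1}\to\bV\mathbbm{2}$, the resulting double category contains all formal composites of the new generators with each other and with morphisms of $\bA'$. Such a formal composite is parallel to, and has the same image in $\bB$ as, some other morphism (either preexisting or another adjoined generator), so the comparison map to $\bB$ is no longer injective on morphisms; the same phenomenon recurs for squares. Hence the claim that ``no unwanted collisions occur'' and that the construction ``stabilizes to produce exactly $\bB$'' is false: further identification cells are unavoidably needed \emph{after} the free attachments, which contradicts the two-phase structure (identify first, then attach) of your plan. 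Your closing caveat about a careful transfinite iteration locates the delicacy in the wrong place (the first factorization) and does not supply the missing induction. The usual repair is to reverse the order: attach all cells freely first so the map becomes surjective on objects and full on morphisms and squares, and only then impose the kernel congruence on each layer by pushouts of coproducts of the identification generators, using that the kernel is already a congruence. But the lifting argument of the paper avoids this bookkeeping entirely.
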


\subsection{Naive fibrant objects and trivial fibrations}

We now give characterizations of the naive fibrant objects and trivial fibrations. 

\begin{prop} \label{charnaivefibNh}
A double category $\bA$ is naive fibrant if and only if, for every diagram in $\bA$ as below left with $u$, $v$ vertical isomorphisms, there is a unique pair $(f,\alpha)$ of a horizontal morphism $f$ and a vertically invertible square $\alpha$ in $\bA$ as below right.
\begin{tz}
\node[](1) {$A$}; 
\node[below of=1](2) {$A'$}; 
\node[right of=1](3) {$C$}; 
\node[below of=3](4) {$C'$};

\draw[->,pro] (1) to node[left,la]{$u$} node[right,la]{$\cong$} (2); 
\draw[->] (2) to node[below,la]{$f'$} (4); 
\draw[->,pro](3) to node[right,la]{$v$} node[left,la]{$\cong$} (4);

\node[right of=3,xshift=1.5cm](1) {$A$}; 
\node[below of=1](2) {$A'$}; 
\node[right of=1](3) {$C$}; 
\node[right of=2](4) {$C'$};

\draw[->,pro] (1) to node[left,la]{$u$} (2); 
\draw[->] (1) to node[above,la]{$f$} (3); 
\draw[->] (2) to node[below,la]{$f'$} (4); 
\draw[->,pro](3) to node[right,la]{$v$} (4); 
 
\node[la] at ($(1)!0.5!(4)+(5pt,0)$) {$\alpha$};
\node[la] at ($(1)!0.5!(4)-(5pt,0)$) {\rotatebox{90}{$\cong$}};
\end{tz}
\end{prop}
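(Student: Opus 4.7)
The plan is to express the naive fibrant condition as a right lifting property against the set $\cJ$ of generating anodyne extensions displayed in Table~\ref{tab:genanext}, then unpack the RLP against each of the three entries in that table and show that together they amount to the existence and uniqueness statement in the proposition.

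First, the RLP against the pushout-product of $\emptyset\to\mathbbm{1}$ with $\mathbbm{1}\to\bV\bI$, which simplifies to the double functor $\mathbbm{1}\to\bV\bI$, is trivially satisfied by every double category, since any object $A\in\bA$ admits the identity vertical morphism $e_A\colon A\arrowdot A$ (a vertical isomorphism) as an extension. So this generator contributes no condition.

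Second, the RLP against the pushout-product of $\mathbbm{1}\sqcup\mathbbm{1}\to\bH\mathbbm{2}$ with $\mathbbm{1}\to\bV\bI$ corresponds to the existence part of the proposition. Indeed, unpacking the pushout-product as depicted in Table~\ref{tab:genanext}, a double functor from its domain into $\bA$ consists of exactly the boundary data of the proposition: a horizontal morphism $f'\colon A'\to C'$ together with vertical isomorphisms $u\colon A\xrightarrow{\cong} A'$ and $v\colon C\xrightarrow{\cong} C'$; while an extension to the codomain $\bV\bI\times\bH\mathbbm{2}$ is precisely a horizontal morphism $f\colon A\to C$ together with a vertically invertible square $\alpha$ filling that boundary. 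Hence this RLP is equivalent to the existence of the pair $(f,\alpha)$.

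Third, the RLP against the pushout-product of $\bH(\boldsymbol{\cdot}\rightrightarrows\boldsymbol{\cdot})\to\bH\mathbbm{2}$ with $\mathbbm{1}\to\bV\bI$ corresponds to the uniqueness part. A map from its domain into $\bA$ consists of two parallel horizontal morphisms $f_1,f_2\colon A\to C$, a horizontal morphism $f'\colon A'\to C'$, vertical isomorphisms $u\colon A\xrightarrow{\cong} A'$ and $v\colon C\xrightarrow{\cong} C'$, and two vertically invertible squares $\alpha_1,\alpha_2$ each with the common boundary $(u,f',v)$ and respective top edge $f_i$; the codomain instead has a single top horizontal morphism and a single square. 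Hence an extension exists if and only if $f_1=f_2$ and $\alpha_1=\alpha_2$, which is exactly the uniqueness assertion. Combining these three observations gives the biconditional.

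The main obstacle is a careful verification of the pushout-product descriptions: one must check that the domain of each pushout-product encodes precisely the boundary data (plus the two parallel copies in the third case) and contributes no additional squares or morphisms in $\bA$, so that the RLP genuinely corresponds to producing a unique filler $(f,\alpha)$ with the prescribed vertically invertible boundary.
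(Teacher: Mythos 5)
Your proposal is correct and follows exactly the route the paper takes: its proof of \cref{charnaivefibNh} consists precisely of inspecting the right lifting property of $\bA\to\mathbbm{1}$ against the three generating anodyne extensions of \cref{tab:genanext}, with the first generator vacuous, the second giving existence, and the third giving uniqueness. Your unpacking of the pushout-product domains and codomains (including the observation that the relevant squares are forced to be vertically invertible) is the ``close inspection'' the paper leaves implicit.
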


\begin{proof}
This is obtained directly from a close inspection of the right lifting properties of~$\bA$ with
respect to the generating anodyne extensions of \cref{tab:genanext}. 
\end{proof}

\begin{rem}
By a dual of \cite[Theorem 4.1.7]{grandis}, the requirement that the pairs $(f,\alpha)$ as above exist is equivalent to asking that every vertical isomorphism in $\bA$ has a companion. However, the requirement that the pairs $(f,\alpha)$ are unique is stronger than requiring that the companions for the vertical isomorphisms are unique.
\end{rem}

We now characterize the trivial fibrations in two useful ways.

\begin{prop} \label{chartrivfib}
For a double functor $F\colon \bA\to \bB$, the following are equivalent.
\begin{rome}
   \item The double functor $F$ is a trivial fibration. 
   \item The double functor $F$ is surjective on objects and fully faithful on horizontal and vertical morphisms and squares.
   \item There exists a tuple $(G,\eta,\chi,\mu)$ consisting of
\begin{itemize}
   \item a double functor $G\colon \bB\to \bA$ such that $F G=\id_{\bB}$,
   \item a horizontal natural isomorphism $\eta\colon GF\cong \id_{\bA}$ such that $F\eta=\id_F$,
   \item a vertical natural isomorphism $\chi\colon G F\cong \id_{\bA}$ such that $F\chi=e_F$,
   \item a modification $\mu$ of the form 
\begin{tz}
\node[](1) {$GF$}; 
\node[below of=1](2) {$\id_\bA$}; 
\node[right of=1](3) {$\id_\bA$}; 
\node[below of=3](4) {$\id_\bA$};

\draw[n,pro] (1) to node[left,la]{$\chi$} (2); 
\draw[n] (1) to node[above,la]{$\eta$} (3); 
\draw[d] (2) to (4); 
\draw[d,pro](3) to (4); 
 
\node[la] at ($(1)!0.5!(4)$) {$\mu$};
\end{tz}
which is both horizontally and vertically invertible and such that $F\mu=\square_F$.
\end{itemize}
\end{rome}
\end{prop}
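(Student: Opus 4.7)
The plan is to prove (i) $\Leftrightarrow$ (ii) by unpacking the right lifting property of $F$ against the generating cofibrations enumerated in \cref{tab:gencof}, and then to show (ii) $\Leftrightarrow$ (iii) by direct constructions that propagate properties of $F$ to a chosen section $G$ together with its coherence isomorphisms.

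For (i) $\Leftrightarrow$ (ii), using \cref{tab:gencof} together with the simplification in \cref{lastgencof}, a convenient set of generating cofibrations consists of the seven double functors $\emptyset \to \mathbbm{1}$, $\mathbbm{1}\sqcup\mathbbm{1} \to \bV\mathbbm{2}$, $\bV(\boldsymbol{\cdot}\rightrightarrows\boldsymbol{\cdot}) \to \bV\mathbbm{2}$, $\mathbbm{1}\sqcup\mathbbm{1} \to \bH\mathbbm{2}$, $\bH(\boldsymbol{\cdot}\rightrightarrows\boldsymbol{\cdot}) \to \bH\mathbbm{2}$, $\partial(\bV\mathbbm{2}\times\bH\mathbbm{2}) \to \bV\mathbbm{2}\times\bH\mathbbm{2}$, and the double functor identifying two parallel squares (the generator from \cref{lastgencof}). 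A routine inspection then shows that the right lifting property of $F$ against these seven maps corresponds precisely to $F$ being surjective on objects, full and faithful on vertical morphisms, full and faithful on horizontal morphisms, and full and faithful on squares, respectively, which is the content of (ii).

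For (iii) $\Rightarrow$ (ii), surjectivity on objects is immediate from $FG = \id_\bB$. For fullness on horizontal morphisms, given $g\colon FA \to FA'$ in $\bB$, the horizontal morphism $\eta_{A'} \cdot Gg \cdot \eta_A^{-1}$ maps under $F$ to $g$ since $F\eta = \id_F$ and $FG = \id_\bB$; faithfulness follows by applying $G$ and using horizontal naturality of $\eta$. The argument for vertical morphisms is dual, using $\chi$. For squares, fullness and faithfulness follow by an analogous argument using $\mu$ together with its horizontal and vertical inverses and the modification axioms.

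For (ii) $\Rightarrow$ (iii), the main construction is as follows. Using surjectivity on objects, choose for each $B \in \bB$ an object $GB \in \bA$ with $FGB = B$. Fully faithfulness on horizontal morphisms then yields a unique horizontal morphism $Gg\colon GB \to GB'$ with $F(Gg) = g$ for each horizontal morphism $g\colon B \to B'$ in $\bB$, and uniqueness automatically makes $G$ functorial on horizontal morphisms. Applying the same procedure to vertical morphisms and to squares produces a double functor $G\colon \bB \to \bA$ with $FG = \id_\bB$. Next, for each object $A \in \bA$, the equality $FGFA = FA$ combined with fully faithfulness on horizontal morphisms yields a unique horizontal morphism $\eta_A\colon GFA \to A$ with $F\eta_A = \id_{FA}$; uniqueness (applied to the analogously constructed morphism in the opposite direction) makes $\eta_A$ a horizontal isomorphism, and its naturality is automatic since both sides of the naturality equation map to the same morphism under $F$. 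The vertical natural isomorphism $\chi$ is constructed analogously, and $\mu_A$ is defined as the unique square with the prescribed boundary mapping to the identity square $\square_{FA}$; its horizontal and vertical invertibility as well as the modification axioms all follow from uniqueness and faithfulness on squares. The main difficulty is not conceptual — every verification reduces, by faithfulness, to an equality of morphisms or squares in $\bB$ — but rather book-keeping: one must track carefully that the many functoriality, naturality, and modification axioms are each reduced to an identity that holds after applying $F$.
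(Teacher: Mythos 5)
Your proposal is correct and follows essentially the same route as the paper: (i)$\Leftrightarrow$(ii) by inspecting lifting properties against the generators of \cref{tab:gencof} together with \cref{lastgencof}, and (ii)$\Leftrightarrow$(iii) by the same unique-lift constructions of $G$, $\eta$, $\chi$, $\mu$ and the same conjugation formula $\eta_{C}\cdot Gg\cdot\eta_A^{-1}$ for fullness. One minor correction: in (iii)$\Rightarrow$(ii), fully faithfulness on squares is most naturally obtained from the horizontally invertible square components $\eta_u$ of $\eta$ and its naturality with respect to squares (as the paper does), rather than from the modification $\mu$, whose components do not have the boundaries needed to conjugate a general square of $\bB$ into one with the prescribed boundary in $\bA$.
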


\begin{proof}
The fact that (i) and (ii) are equivalent is obtained directly from a close inspection of the right lifting properties of $F$ with
respect to the generating cofibrations of \cref{tab:gencof}, together with \cref{lastgencof}. We show that (ii) and (iii) are equivalent. 

Suppose that $F$ satisfies (ii). We define a double functor $G\colon \bB\to \bA$. Given an object $B\in \bB$, since $F$ is surjective on objects, there exists an object $A\in \bA$ such that $FA=B$ and we set $GB\coloneqq A$. Now, since $F$ is fully faithful on horizontal and vertical morphisms and squares, there is a unique way of extending $G$ on horizontal and vertical morphisms and squares in such a way that $FG=\id_{\bB}$. In particular, this defines a double functor by unicity of the lifts. 

Now we construct the horizontal natural isomorphism $\eta\colon GF\cong \id_{\bA}$. Given an object $A\in \bA$, by definition of $G$, we have that $FGFA=FA$. Since $F$ is fully faithful on horizontal morphisms, there exists a unique isomorphism $f\colon GFA\cong A$ in $\bA$ such that $Ff=\id_{FA}$ and we set $\eta_A\coloneqq f$. Then, given a vertical morphism $u\colon A\arrowdot A'$ in $\bA$, we have the following identity square as depicted below left. 
\begin{tz}
\node[](1) {$FGFA$}; 
\node[below of=1](2) {$FGFA'$}; 
\node[right of=1,xshift=.5cm](3) {$FA$}; 
\node[below of=3](4) {$FA'$};

\draw[->,pro] (1) to node[left,la]{$FGFu$} (2); 
\draw[d] (1) to node[above,la]{$F\eta_A$} (3); 
\draw[d] (2) to node[below,la]{$F\eta_{A'}$} (4); 
\draw[->,pro](3) to node[right,la]{$Fu$} (4); 
 
\node[la] at ($(1)!0.5!(4)$) {$\id_{Fu}$};

\node[right of=3,xshift=1.5cm](1) {$GFA$}; 
\node[below of=1](2) {$GFA'$}; 
\node[right of=1](3) {$A$}; 
\node[right of=2](4) {$A'$};

\draw[->,pro] (1) to node[left,la]{$GFu$} (2); 
\draw[->] (1) to node[above,la]{$\eta_A$} (3); 
\draw[->] (2) to node[below,la]{$\eta_{A'}$} (4); 
\draw[->,pro](3) to node[right,la]{$u$} (4); 
 
\node[la] at ($(1)!0.5!(4)+(5pt,0)$) {$\alpha$};
\node[la] at ($(1)!0.5!(4)-(5pt,0)$) {$\cong$};
\end{tz}
Since $F$ is fully faithful on squares, there is a unique horizontally invertible square $\alpha$ in $\bA$ as depicted above right such that $F\alpha=\id_{Fu}$ and we set $\eta_u\coloneqq \alpha$. Naturality of $\eta$ follows from fully faithfulness of $F$ on horizontal morphisms and squares and, by construction, we have that $F\eta=\id_F$. The construction of the vertical isomorphism $\chi\colon GF\cong \id_{\bA}$ with $F\chi=e_F$ is similar. 

Finally, we construct the modification $\mu$. Given an object $A\in \bA$, we have the following identity square as depicted below left. 
\begin{tz}
\node[](1) {$FGFA$}; 
\node[below of=1](2) {$FA$}; 
\node[right of=1,xshift=.5cm](3) {$FA$}; 
\node[below of=3](4) {$FA$};

\draw[d,pro] (1) to node[left,la]{$F\chi_A$} (2); 
\draw[d] (1) to node[above,la]{$F\eta_A$} (3); 
\draw[d] (2) to (4); 
\draw[d,pro](3) to (4); 
 
\node[la] at ($(1)!0.5!(4)$) {$\square_{FA}$};

\node[right of=3,xshift=1.5cm](1) {$GFA$}; 
\node[below of=1](2) {$A$}; 
\node[right of=1](3) {$A$}; 
\node[right of=2](4) {$A$};

\draw[->,pro] (1) to node[left,la]{$\chi_A$} (2); 
\draw[->] (1) to node[above,la]{$\eta_A$} (3); 
\draw[d] (2) to (4); 
\draw[d,pro](3) to (4); 
 
\node[la] at ($(1)!0.5!(4)$) {$\varphi$};
\end{tz}
Since $F$ is fully faithful on squares, there is a unique horizontally and vertically invertible square $\varphi$ as depicted above right such that $F\varphi=\square_{FA}$ and we set $\mu_A\coloneqq \varphi$. Compatibility of $\mu$ with $\eta$ and $\chi$ follows from fully faithfulness of $F$ on squares and, by construction, we have that $F\mu=\square_F$. 

Now suppose that $F$ admits a tuple $(G,\eta,\chi,\mu)$ as in (iii). We show that $F$ satisfies (ii). From the equality $FG=\id_{\bB}$ we directly get that $F$ is surjective on objects. Now given two objects $A,C\in \bA$ and a horizontal morphism $g\colon FA\to FC$ in $\bB$, then the composite of horizontal morphisms in $\bA$
\[ A\xrightarrow{\eta_A^{-1}} GFA\xrightarrow{Gg}GFC\xrightarrow{\eta_{C}} C \]
is mapped by $F$ to $g$ since $F\eta=\id_F$ and $FG=\id_{\bB}$. Hence $F$ is full on horizontal morphisms. If $f\colon A\to C$ is another horizontal morphism in $\bA$ such that $Ff=g$, by naturality of $\eta$ we have the following commutative square of horizontal morphisms in~$\bA$
\begin{tz}
\node[](1) {$GFA$}; 
\node[below of=1](2) {$GFC$}; 
\node[right of=1](3) {$A$}; 
\node[below of=3](4) {$C$};

\draw[->] (1) to node[left,la]{$GFf=Gg$} (2); 
\draw[->] (1) to node[above,la]{$\eta_A$} (3); 
\draw[->] (2) to node[below,la]{$\eta_{C}$} (4); 
\draw[->](3) to node[right,la]{$f$} (4); 
\end{tz}
so that $f=\eta_C(Gg)\eta_A^{-1}$. Hence $F$ is faithful on horizontal morphisms. Similarly, one can show using $\chi$ that $F$ is fully faithful on vertical morphisms. Finally, fully faithfulness on squares follows from a similar argument to the one for horizontal morphisms, using that $\eta$ has horizontally invertible square components and that it is natural with respect to squares.
\end{proof}

\subsection{Proof of \cref{thm:SRIalongnerve}}

To show that every trivial fibration is a weak equivalence, we first need the next technical lemmas, which allow us to compute a specific naive fibrant replacement of a trivial fibration which is also a trivial fibration. 

\begin{lem} \label{technicallemma}
Let $F\colon \bA\to \bB$ be a double functor that admits a tuple $(G,\eta,\chi,\mu)$ as in \cref{chartrivfib} (iii) and let $I\colon \bA\to \bC$ be a cofibration. Then there exists a tuple $(H,\theta,\psi,\nu)$ consisting of 
\begin{itemize}
   \item a double functor $H\colon \bC\to \bC$ such that $H I=I G F$, and $H C=C$ for every object $C\in \bC$ which is not in the image of $I$,
   \item a horizontal natural isomorphism $\theta\colon H\stackrel{\cong}{\Rightarrow} \id_{\bC}$ such that $\theta I=I\eta$, and $\theta_C=\id_C$ for every object $C\in \bC$ which is not in the image of $I$,
   \item a vertical natural isomorphism $\psi\colon H\Isodot \id_{\bC}$ such that $\psi I=I\chi$, and $\psi_C=e_C$ for every object $C\in \bC$ which is not in the image of $I$,
   \item a modification $\nu$ of the form 
\begin{tz}
\node[](1) {$H$}; 
\node[below of=1](2) {$\id_\bC$}; 
\node[right of=1](3) {$\id_\bC$}; 
\node[below of=3](4) {$\id_\bC$};

\draw[n,pro] (1) to node[left,la]{$\psi$} (2); 
\draw[n] (1) to node[above,la]{$\theta$} (3); 
\draw[d] (2) to (4); 
\draw[d,pro](3) to (4); 
 
\node[la] at ($(1)!0.5!(4)$) {$\nu$};
\end{tz}
which is both horizontally and vertically invertible and such that $\nu I=I\mu$, and $\nu_C=\square_C$ for every object $C\in \bC$ which is not in the image of $I$.
\end{itemize}
\end{lem}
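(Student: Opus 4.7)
By \cref{charcofib}, the cofibration $I\colon \bA \to \bC$ is injective on objects, so every object of $\bC$ either lies in the image of $I$ (with a unique preimage in $\bA$) or does not. The plan is to construct $(H, \theta, \psi, \nu)$ by a direct case analysis on this dichotomy, transporting the data $(G, \eta, \chi, \mu)$ via $I$ on objects in the image and taking trivial values off of it.

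On objects, I set $H(Ia) \coloneqq IGFa$ and $Hc \coloneqq c$ for $c \notin I(\mathrm{ob}\,\bA)$, and define the object components
\[
\theta_{Ia} \coloneqq I\eta_a, \quad \theta_c \coloneqq \id_c; \qquad \psi_{Ia} \coloneqq I\chi_a, \quad \psi_c \coloneqq e_c; \qquad \nu_{Ia} \coloneqq I\mu_a, \quad \nu_c \coloneqq \square_c.
\]
In each case $\theta_c$ is a horizontal isomorphism and $\psi_c$ a vertical isomorphism (inherited from $\eta_a, \chi_a$ on image objects, and trivial elsewhere), so we may conjugate along them.

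For a horizontal morphism $h\colon c \to c'$ in $\bC$, set $Hh \coloneqq \theta_{c'}^{-1} \circ h \circ \theta_c$, which makes the strict horizontal naturality $\theta_{c'} \circ Hh = h \circ \theta_c$ hold by construction; analogously, for a vertical morphism $u$ set $Hu \coloneqq \psi_{c'}^{-1} \cdot u \cdot \psi_c$. On image morphisms $If$ and $Iu'$, these recover $IGFf$ and $IGFu'$ by the naturality of $\eta$ and $\chi$ applied under $I$, while off the image they reduce to $h$ and $u$. For a square $\alpha$ in $\bC$, I define $H\alpha$ by pasting $\alpha$ with the modification squares $\nu$ at its four corners (using $I\mu_a$ at image corners and $\square_c$ elsewhere); on image squares this recovers $IGF\alpha$ by the modification axiom of $\mu$. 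Finally, the naturality square components $\theta_u$ (for vertical $u$) and $\psi_h$ (for horizontal $h$) are defined analogously as pastings of the corresponding $\nu$-squares with $\id_u$ or $e_h$, and are vertically/horizontally invertible because $\nu$ is.

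The remaining verifications are routine and split into three standard checks: $H$ is a double functor (composition of conjugations telescopes correctly and identities are preserved); $\theta, \psi$ are natural transformations with the prescribed components and $\nu$ is a modification (these follow from the corresponding properties of $\eta, \chi, \mu$ on image data, and trivialize off the image); and $\theta, \psi, \nu$ are invertible (inherited directly from $\eta, \chi, \mu$). The desired compatibility with $I$, namely $HI = IGF$, $\theta I = I\eta$, $\psi I = I\chi$, $\nu I = I\mu$, together with the trivial behavior outside $I(\bA)$, holds by construction. The main difficulty is purely combinatorial bookkeeping: arranging the corner pastings so that all coherences mesh globally; however, this is driven entirely by the coherence already present in $(G, \eta, \chi, \mu)$ on $\bA$ and the triviality of the choices outside $I(\bA)$, so no new obstruction arises.
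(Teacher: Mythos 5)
Your proposal is correct and follows essentially the same route as the paper: the paper's (very terse) proof likewise uses injectivity on objects from \cref{charcofib} to fix $(H,\theta,\psi,\nu)$ on objects by the two-case dichotomy, and then observes that all remaining data is forced by the required naturality of $\theta$, $\psi$, and $\nu$ — which is exactly the conjugation formulae and corner pastings you spell out. Your version simply makes explicit what the paper leaves as "completely determined by the required naturality."
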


\begin{proof}
Since $I$ is injective on objects by \cref{charcofib}, given an object $C\in \bC$, we have that either $C=IA$ for a unique object $A\in \bA$ or that $C$ is not in the image of $I$. Hence, the statement determines $(H,\theta,\psi,\nu)$ on objects. The rest of the data is then completely determined by the required naturality of $\theta$, $\psi$, and $\nu$. 
\end{proof}

\begin{lem} \label{lem:pushoftrivfib}
The pushout of a trivial fibration along a cofibration is a trivial fibration.
\end{lem}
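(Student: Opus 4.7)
The plan is to use the characterization of trivial fibrations as ``strong deformation retracts'' given by \cref{chartrivfib}(iii). Given a trivial fibration $F\colon \bA\to \bB$ and a cofibration $I\colon \bA\to \bC$, we let
\begin{tz}
\node[](1) {$\bA$};
\node[right of=1](2) {$\bB$};
\node[below of=1](1') {$\bC$};
\node[below of=2](2') {$\bD$};
\node at ($(2')-(.3cm,-.3cm)$) {$\ulcorner$};
\draw[->] (1) to node[above,la]{$F$} (2);
\draw[->] (1) to node[left,la]{$I$} (1');
\draw[->] (2) to node[right,la]{$J$} (2');
\draw[->] (1') to node[below,la]{$F'$} (2');
\end{tz}
be the pushout, and aim to produce a tuple $(G',\eta',\chi',\mu')$ witnessing that $F'$ is a trivial fibration. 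Apply \cref{chartrivfib} to $F$ to obtain a tuple $(G,\eta,\chi,\mu)$, and then feed it into \cref{technicallemma} along the cofibration $I$ to obtain $(H,\theta,\psi,\nu)$ on $\bC$.

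First I would construct $G'\colon \bD\to \bC$ by the universal property of the pushout applied to the cospan $IG\colon \bB\to \bC$ and $H\colon \bC\to \bC$; these agree on $\bA$ by the identity $HI=IGF$ provided in \cref{technicallemma}. By construction $G'J=IG$ and $G'F'=H$. Next I would take $\eta'\coloneqq \theta$, $\chi'\coloneqq \psi$, and $\mu'\coloneqq \nu$, all of which have the correct type since $G'F'=H$.

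It then remains to verify the equalities $F'G'=\id_\bD$, $F'\eta'=\id_{F'}$, $F'\chi'=e_{F'}$, and $F'\mu'=\square_{F'}$. The last three are componentwise checks: on an object of the form $IA$ we get for instance $F'\theta_{IA}=F'I\eta_A=JF\eta_A=J\id_{FA}=\id_{F'IA}$ using $F\eta=\id_F$, while on an object $C$ outside the image of $I$ the component $\theta_C$ is already the identity by \cref{technicallemma}; the arguments for $\chi'$ and $\mu'$ are analogous. For $F'G'=\id_\bD$ I would invoke the universal property of the pushout: the composite $F'G'J=F'IG=JFG=J$ is immediate, and $F'G'F'=F'H$ equals $F'$ because, using naturality of $\theta$ to write $Hx=\theta_{C'}^{-1}\cdot x\cdot\theta_C$ for any horizontal morphism $x\colon C\to C'$ (and analogously for vertical morphisms and squares via $\psi$ and $\nu$), the already-verified identity $F'\theta=\id_{F'}$ forces $F'Hx=F'x$. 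The main (very mild) obstacle is precisely this last verification, because $H$ is not the identity on $\bC$, only on the complement of $I(\bA)$; but the point is that its failure to be the identity lies entirely in the kernel of $F'$.
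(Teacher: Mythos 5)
Your proposal is correct and follows essentially the same route as the paper: it invokes \cref{chartrivfib}(iii) and \cref{technicallemma}, builds $G'$ out of the pushout from the cospan $(IG,H)$ using $HI=IGF$, transports $(\theta,\psi,\nu)$ along $G'F'=H$, and verifies the required identities componentwise, separating objects in the image of $I$ from those outside it. The only cosmetic difference is that you deduce $F'H=F'$ by conjugating with the components of $\theta$, $\psi$, $\nu$ via naturality, whereas the paper shows $F'\psi=e_{F'}$ and $F'\theta=\id_{F'}$ directly and concludes the same way; these are the same computation.
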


\begin{proof}
Let $F\colon \bA\to \bB$ be a trivial fibration and $I\colon \bA\to \bC$ be a cofibration. We want to show that the double functor $F'\colon \bC\to \bP$ in the following pushout is a trivial fibration.
\begin{tz}
\node[](1) {$\bA$}; 
\node[right of=1](2) {$\bC$}; 
\node[below of=1](1') {$\bB$};
\node[below of=2](2') {$\bP$}; 

\draw[->] (1) to node[above,la]{$I$} (2); 
\draw[->] (1) to node[left,la]{$F$} (1'); 
\draw[->] (2) to node[right,la]{$F'$} (2');
\draw[->] (1') to node[below,la]{$I'$} (2');
\node at ($(2')-(.3cm,-.3cm)$) {$\ulcorner$};
\end{tz}
By (iii) of \cref{chartrivfib}, there exists a tuple $(G,\eta,\chi,\mu)$ for $F$. By \cref{technicallemma}, we then get a tuple $(H,\theta,\psi,\nu)$ compatible with $I$ and $(G,\eta,\chi,\mu)$. By the universal property of the pushout, using that $HI=IGF$, there exists a unique double functor $G'\colon \bP\to \bC$ making the following diagram commute. 
\begin{tz}
\node[](1) {$\bA$}; 
\node[right of=1](2) {$\bC$}; 
\node[below of=1](1') {$\bB$};
\node[below of=2](2') {$\bP$}; 
\node[below right of=1',xshift=.3cm](3') {$\bA$};
\node[right of=3'](3) {$\bC$};

\draw[->] (1) to node[above,la]{$I$} (2); 
\draw[->] (1) to node[left,la]{$F$} (1'); 
\draw[->] (2) to node[right,la]{$F'$} (2');
\draw[->] (1') to node[above,la]{$I'$} (2');
\node at ($(2')-(.3cm,-.3cm)$) {$\ulcorner$};
\draw[->,bend left] (2) to node[right,la]{$H$} (3);
\draw[->] (1') to node[below,la,xshift=-2pt,pos=0.4]{$G$} (3');
\draw[->] (3') to node[below,la]{$I$} (3);
\draw[->,dashed] (2') to node[above,la,xshift=2pt,pos=0.6]{$G'$} (3);
\end{tz}
We now show that $F'H=F'$ by showing that $F'\psi\colon F'H\Isodot F'$ is the identity at $F'$. Given an object $C\in \bC$, if $C=IA$ for a unique $A\in \bA$, then using that $\psi I=I\chi$ and that $F\chi=e_F$ we get that
\[ F'\psi_{IA}=F'I\chi_A=I'F\chi_A=I'e_{FA}=e_{I'FA}=e_{F'IA}. \]
Similarly, since $\nu I=I\mu$ and $F\mu=\square_F$, we have $F'\nu_{IA}=\square_{F'IA}$. Otherwise, if $C$ is not in the image of $I$, then using that $\psi_C=e_C$ we get that $F'\psi_C=F'e_C=e_{F'C}$. Similarly, since $\nu_C=\square_C$, we have $F'\nu_C=\square_{F'C}$. Now, if $f\colon C\to D$ is a horizontal morphism in $\bC$, by naturality of $\nu$, we have \[ F'\psi_f=(F'\nu_D^{-1,h})\circ_h F'e_f\circ_h (F'\nu_C)=\square_{F'D}\circ_h e_{F'f}\circ_h \square_{F'C}=e_{F'f}. \]
This shows that $F'\psi=e_{F'}$ and so $F'H=F'$. Similarly, one can show that $F'\theta=\id_{F'}$. 

Using that $F'H=F'$ and $FG=\id_{\bB}$, by the universal property of the pushout we have that $F'G'=\id_{\bP}$, and since $G'F'=H$, the tuple $(G',\theta,\psi,\nu)$ gives a tuple as in \cref{chartrivfib} (iii) for $F'$. This shows that $F'$ is a trivial fibration.
\end{proof}

\begin{lem} \label{trivfibfromfibrant}
Let $F\colon \bA\to \bB$ be a trivial fibration such that $\bA$ is naive fibrant. Then $\bB$ is also naive fibrant. 
\end{lem}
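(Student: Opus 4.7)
The plan is to invoke Proposition \ref{chartrivfib}(iii) to obtain a strict section $G\colon \bB\to \bA$ with $FG=\id_\bB$, and then use this section to transport the lifting property of Proposition \ref{charnaivefibNh} from $\bA$ to $\bB$.

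More precisely, suppose we are given a diagram in $\bB$ of the form
\begin{tz}
\node[](1) {$B$};
\node[below of=1](2) {$B'$};
\node[right of=1](3) {$D$};
\node[below of=3](4) {$D'$};

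\draw[->,pro] (1) to node[left,la]{$u$} node[right,la]{$\cong$} (2);
\draw[->] (2) to node[below,la]{$f'$} (4);
\draw[->,pro](3) to node[right,la]{$v$} node[left,la]{$\cong$} (4);
\end{tz}
with $u,v$ vertical isomorphisms. For existence of a filler, apply the double functor $G$ to obtain a corresponding diagram $Gu$, $Gv$, $Gf'$ in $\bA$, noting that $Gu$ and $Gv$ are again vertical isomorphisms since $G$ preserves vertical composition and identities. By Proposition \ref{charnaivefibNh} applied to $\bA$, there is a unique filler $(f_0,\alpha_0)$ with $\alpha_0$ vertically invertible. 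Applying $F$ and using that $FG=\id_\bB$ (so $FGu=u$, $FGv=v$, $FGf'=f'$), the pair $(Ff_0,F\alpha_0)$ is a filler of the original diagram, with $F\alpha_0$ vertically invertible because $F$ too preserves vertical composition.

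For uniqueness, suppose $(f_1,\alpha_1)$ and $(f_2,\alpha_2)$ are two such fillers in $\bB$. Applying $G$ produces two fillers $(Gf_1,G\alpha_1)$ and $(Gf_2,G\alpha_2)$ of the diagram $Gu$, $Gv$, $Gf'$ in $\bA$. By uniqueness in $\bA$ (Proposition \ref{charnaivefibNh}), we have $Gf_1=Gf_2$ and $G\alpha_1=G\alpha_2$. Applying $F$ and using once more that $FG=\id_\bB$ yields $f_1=f_2$ and $\alpha_1=\alpha_2$. This shows that $\bB$ satisfies the characterization of Proposition \ref{charnaivefibNh}, and hence is naive fibrant.

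There is no real obstacle here: once one has the strict section provided by Proposition \ref{chartrivfib}(iii) (rather than only a weak inverse), the argument is purely formal, since the exact equality $FG=\id_\bB$ lets us transfer both existence and uniqueness along $G$ and back along $F$.
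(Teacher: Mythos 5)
Your proof is correct. It differs from the paper's argument in which characterization of trivial fibrations from \cref{chartrivfib} it leans on: the paper uses (ii) (surjectivity on objects plus fully faithfulness on horizontal and vertical morphisms and on squares), choosing preimages $A,A',C,C'$ of the four corners, lifting the boundary data uniquely along $F$, filling in $\bA$, and pushing the filler back down; uniqueness then follows from faithfulness of $F$ on horizontal morphisms and squares once the corner objects are fixed. You instead use (iii) to extract a strict section $G$ with $FG=\id_{\bB}$ and transport the whole diagram forward along $G$ and back along $F$. Your route makes both existence and uniqueness purely formal (in particular, uniqueness needs no bookkeeping about the choice of preimage objects, since $FG=\id_{\bB}$ collapses everything on the nose), at the cost of invoking the heavier equivalence (i)$\Leftrightarrow$(iii) of \cref{chartrivfib}, whereas the paper's version only needs the more elementary characterization (ii). Both are legitimate given that the paper has already established all three characterizations as equivalent.
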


\begin{proof}
To prove this lemma, we use the characterization of naive fibrant objects from \cref{charnaivefibNh}. Suppose that we are given a diagram in $\bB$ as below left.
\begin{tz}
\node[](1) {$B$}; 
\node[below of=1](2) {$B'$}; 
\node[right of=1](3) {$D$}; 
\node[below of=3](4) {$D'$};

\draw[->,pro] (1) to node[left,la]{$v$} node[right,la]{\rotatebox{270}{$\cong$}} (2); 
\draw[->] (2) to node[below,la]{$g'$} (4); 
\draw[->,pro](3) to node[right,la]{$v'$} node[left,la]{\rotatebox{270}{$\cong$}} (4); 

\node[right of=3,xshift=1.5cm](1) {$A$}; 
\node[below of=1](2) {$A'$}; 
\node[right of=1](3) {$C$}; 
\node[right of=2](4) {$C'$};

\draw[->,pro] (1) to node[left,la]{$v$} node[right,la]{\rotatebox{270}{$\cong$}} (2); 
\draw[->] (2) to node[below,la]{$f'$} (4);  
\draw[->,pro](3) to node[right,la]{$u'$} node[left,la]{\rotatebox{270}{$\cong$}} (4); 
\end{tz}
Since $F$ is surjective on objects, let us fix objects $A,A',C,C'\in \bA$ that map under $F$ to $B,B',D,D'\in\bB$ respectively. Since $F$ is fully faithful on horizontal and vertical morphisms, there are unique vertical isomorphism $u,u'$ in $\bA$ and a unique horizontal morphism $f'$ in $\bA$ as depicted above right such that $Fu=v$, $Fu'=v'$, and $Ff'=f$. Since $\bA$ is naive fibrant, there is a unique pair $(f,\alpha)$ of a horizontal morphism $f$ and a vertically invertible square $\alpha$ in $\bA$ as below left, 
\begin{tz}
\node[](1) {$A$}; 
\node[below of=1](2) {$A'$}; 
\node[right of=1](3) {$C$}; 
\node[right of=2](4) {$C'$};

\draw[->,pro] (1) to node[left,la]{$u$} (2); 
\draw[->] (1) to node[above,la]{$f$} (3); 
\draw[->] (2) to node[below,la]{$f'$} (4); 
\draw[->,pro](3) to node[right,la]{$v$} (4); 
 
\node[la] at ($(1)!0.5!(4)-(5pt,0)$) {$\alpha$};
\node[la] at ($(1)!0.5!(4)+(5pt,0)$) {\rotatebox{270}{$\cong$}};

\node[right of=3,xshift=1.5cm](1) {$B$}; 
\node[below of=1](2) {$B'$}; 
\node[right of=1](3) {$D$}; 
\node[right of=2](4) {$D'$};

\draw[->,pro] (1) to node[left,la]{$v$} (2); 
\draw[->] (1) to node[above,la]{$Ff$} (3); 
\draw[->] (2) to node[below,la]{$g'$} (4); 
\draw[->,pro](3) to node[right,la]{$v'$} (4); 
 
\node[la] at ($(1)!0.5!(4)-(6pt,0)$) {$F\alpha$};
\node[la] at ($(1)!0.5!(4)+(6pt,0)$) {\rotatebox{270}{$\cong$}};
\end{tz}
so its image gives a pair $(Ff,F\alpha)$ of a horizontal morphism $Ff$ and a vertically invertible square $F\alpha$ as depicted above right. This shows the existence of such a pair. For the unicity, since $F$ is fully faithful on horizontal morphisms and squares, it lifts such a pair uniquely (once we have fixed the objects $A,A',C,C'$) and so the pair $(Ff,\alpha)$ must be unique. This shows that $\bB$ is naive fibrant.
\end{proof}

Using the above results, we can now prove \ref{srind2} of \cref{thm:semirightinduced}.

\begin{prop} \label{trivfibarewedblcat}
Every trivial fibration is a weak equivalence.
\end{prop}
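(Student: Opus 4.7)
The plan is to produce an explicit naive fibrant replacement of $F$ which happens to itself be a trivial fibration, and then observe that any such trivial fibration is sent by $N^h$ to a level-wise equivalence of categories, hence belongs to $\Wf$.

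First, choose a naive fibrant replacement $\iota_\bA \colon \bA \to \bA'$ of the domain, obtained by factoring $\bA \to 1$ through the weak factorization system $(\an, \nfib)$; as an anodyne extension, $\iota_\bA$ is in particular a cofibration, since $\J \subseteq \cof(\I)$. Then form the pushout in $\dblcat$
\begin{tz}
\node[](1) {$\bA$};
\node[right of=1](2) {$\bB$};
\node[below of=1](1') {$\bA'$};
\node[below of=2](2') {$\bB'$.};
\draw[->] (1) to node[above,la]{$F$} (2);
\draw[->] (1) to node[left,la]{$\iota_\bA$} (1');
\draw[->] (2) to node[right,la]{$\iota_\bB$} (2');
\draw[->] (1') to node[below,la]{$F'$} (2');
\node at ($(2')-(.3cm,-.3cm)$) {$\ulcorner$};
\end{tz}
The map $\iota_\bB$ is an anodyne extension, as a pushout of one, and $F'$ is a trivial fibration by \cref{lem:pushoftrivfib}. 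Since $\bA'$ is naive fibrant, \cref{trivfibfromfibrant} yields that $\bB'$ is naive fibrant as well, so this square is a naive fibrant replacement of $F$ in the sense of \cref{def:we}.

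It remains to verify that $F' \colon \bA' \to \bB'$ lies in $\Wf$; that is, that $N^h F'$ is a level-wise equivalence of categories in $\cat^{\Dop}$. By \cref{chartrivfib}(ii), $F'$ is surjective on objects and fully faithful on horizontal morphisms, vertical morphisms, and squares. A short degree-by-degree unpacking of $(N^h F')_n$ — using surjectivity on objects together with fullness on horizontal morphisms to lift strings of composable horizontals (giving essential surjectivity, in fact surjectivity on objects of $(N^h\bA')_n$), and using fully faithfulness on squares (which compose horizontally) to obtain fully faithfulness on the morphisms of $(N^h\bA')_n$ — shows that each level is an equivalence of categories. This last verification is the only non-formal step in the argument; the rest is purely routine manipulation of the factorization and pushout data that was already set up in \cref{lem:pushoftrivfib,trivfibfromfibrant}.
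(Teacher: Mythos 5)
Your proof is correct and follows essentially the same route as the paper: take a naive fibrant replacement of the domain, push out along that anodyne extension, and invoke \cref{lem:pushoftrivfib} and \cref{trivfibfromfibrant} to see that the resulting square is a naive fibrant replacement of $F$ whose bottom map $F'$ is a trivial fibration between naive fibrant objects. The only divergence is the final step: since trivial fibrations in this fibrantly-induced setting are by definition the maps sent by $N^h$ to Reedy trivial fibrations, the paper concludes immediately that $N^h F'$ is a weak equivalence in $\cat^{\Dop}$, so your level-wise verification via \cref{chartrivfib}(ii) is correct but redundant.
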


\begin{proof}
Let $F\colon \bA\to \bB$ be a trivial fibration. Choose a fibrant replacement $\bA\to \bA^\fibrant$ and consider the following pushout in $\dblcat$.
\begin{tz}
\node[](1) {$\bA$}; 
\node[right of=1](2) {$\bA^\fibrant$}; 
\node[below of=1](1') {$\bB$};
\node[below of=2](2') {$\bP$}; 

\draw[->] (1) to (2); 
\draw[->] (1) to node[left,la]{$F$} (1'); 
\draw[->] (2) to node[right,la]{$F'$} (2');
\draw[->] (1') to (2');
\node at ($(2')-(.3cm,-.3cm)$) {$\ulcorner$};
\end{tz}
Since $\bA\to \bA^\fibrant$ is an anodyne extension, it is in particular a cofibration. Then, by \cref{lem:pushoftrivfib}, we have that the double functor $F'\colon \bA^\fibrant \to \bP$ is a trivial fibration. Since $\bA^\fibrant$ is naive fibrant, we get by \cref{trivfibfromfibrant} that $\bP$ is also naive fibrant. Moreover, the double functor $\bB\to \bP$ is an anodyne extension as a pushout of such. Hence the above square is a naive fibrant replacement of $F$. Since $\bA^\fibrant\to \bP$ is a trivial fibration, it is such that its horizontal nerve $N^h\bA^\fibrant\to N^h\bP$ is a trivial fibration and so a weak equivalence in $\cat^{\Dop}$. This shows that $F$ is a weak equivalence.
\end{proof}

It remains to show that there is a path object for every naive fibrant double category. We denote by $[-,-]$ the internal homs in $\cat$ and by $[-,-]_\Delta$ the internal homs in $\cat^{\Dop}$. 

\begin{lem} \label{nervevsinthom}
Let $\cC\in \cat\subseteq\cat^{\Dop}$ and $\bA\in \dblcat$. Then there is a natural isomorphism in $\cat^{\Dop}$
\[ [\cC,N^h\bA]_\Delta\cong N^h\llbracket \bV\cC,\bA\rrbracket. \]
\end{lem}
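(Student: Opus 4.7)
The plan is to verify the claimed isomorphism level-wise in $\cat^{\Dop}$ and show that the resulting isomorphism is natural in $[n]$, and then invoke naturality in $\cC$ and $\bA$ separately. Throughout we shall use the cartesian closed structure on $\dblcat$, the standard adjunction $\bV \dashv \bfV$, and the fact that for a constant object $\cC \in \cat\subseteq\cat^{\Dop}$ and any $\cY\in\cat^{\Dop}$ the internal hom is computed degree-wise, that is $([\cC, \cY]_\Delta)_n \cong [\cC, \cY_n]$. This last fact is easily verified by hand from the defining adjunction $\Hom(\cX\times\cC,\cY)\cong\Hom(\cX,[\cC,\cY]_\Delta)$ in $\cat^{\Dop}$, using that a morphism $\cX\times\cC\to\cY$ amounts to a natural family of functors $\cX_n\times\cC\to \cY_n$ and applying the cartesian closed adjunction in $\cat$ level-wise.

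Next I will rewrite the right hand side at level $n$ using the definition of the horizontal nerve and the cartesian closed structure on $\dblcat$:
\[
\bigl(N^h\llbracket \bV\cC,\bA\rrbracket\bigr)_n = \bfV\llbracket \bH[n],\llbracket \bV\cC,\bA\rrbracket\rrbracket \cong \bfV\llbracket \bH[n]\times\bV\cC,\bA\rrbracket \cong \bfV\llbracket \bV\cC,\llbracket \bH[n],\bA\rrbracket\rrbracket.
\]
The key intermediate lemma to prove is then a natural isomorphism of categories
\[
\bfV\llbracket \bV\cC,\bB\rrbracket \;\cong\; [\cC,\bfV\bB]
\]
for every double category $\bB$. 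On objects, this is just the adjunction $\bV\dashv\bfV$, which identifies double functors $\bV\cC\to\bB$ with functors $\cC\to\bfV\bB$. On morphisms, one unpacks the definition of a strict vertical natural transformation $\lambda\colon F\Arrowdot F'$ between two such double functors: since horizontal morphisms in $\bV\cC$ are all identities and the double functors are strict, the square data collapses to identity squares $e_{\lambda_A}$, and the compatibility with the vertical morphisms of $\bV\cC$ (i.e.\ the morphisms of $\cC$) reduces to the equation $\lambda_{A'}\cdot Fu=F'u\cdot \lambda_A$, which is exactly naturality of $\lambda$ viewed as a natural transformation in $[\cC,\bfV\bB]$. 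This is the only place requiring a genuine, if routine, unpacking of the data, and I expect it to be the main (though minor) obstacle.

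Applying this lemma to $\bB = \llbracket \bH[n],\bA\rrbracket$ gives
\[
\bfV\llbracket \bV\cC,\llbracket \bH[n],\bA\rrbracket\rrbracket \cong [\cC,\bfV\llbracket \bH[n],\bA\rrbracket] = [\cC,(N^h\bA)_n] \cong \bigl([\cC,N^h\bA]_\Delta\bigr)_n,
\]
where the last isomorphism uses the degree-wise formula from the first paragraph. Chaining the two strings of isomorphisms yields the required identification at each level $n$. Finally, I will verify that each constituent isomorphism is natural in $[n]\in\Delta$ (using naturality of the cartesian closed adjunctions, of $\bV\dashv\bfV$, and of the horizontal nerve construction), so that the level-wise isomorphisms assemble into the desired isomorphism of simplicial objects $[\cC,N^h\bA]_\Delta\cong N^h\llbracket \bV\cC,\bA\rrbracket$ in $\cat^{\Dop}$, naturally in $\cC$ and $\bA$.
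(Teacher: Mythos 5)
Your proposal is correct and follows essentially the same route as the paper: a level-wise computation combining the degree-wise description of $[\cC,-]_\Delta$, the cartesian closed structure on $\dblcat$, and the enriched adjunction $\bV\dashv\bfV$ in the form $\bfV\llbracket\bV\cC,\bB\rrbracket\cong[\cC,\bfV\bB]$ (which the paper simply cites from Fiore--Paoli--Pronk rather than unpacking by hand). The only quibble is notational: the collapsed square components of a vertical natural transformation on $\bV\cC$ are horizontal identity squares $\id_{\lambda_A}$ rather than $e_{\lambda_A}$.
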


\begin{proof}
For every $n\geq 0$, we have the following natural isomorphisms of categories
\begin{align*}
    ([\cC,N^h\bA]_\Delta)_n &= [\cC, (N^h\bA)_n] & \text{def.\ of enr.\ hom} \\
    &\cong [\cC, \bfV\llbracket\bH [n], \bA\rrbracket ] & \text{def.\ of } N^h \\
    & \cong \bfV\llbracket\bV\cC, \llbracket\bH [n],\bA\rrbracket \rrbracket& \bV\dashv\bfV \text{ enriched\footnotemark} \\
    & \cong \bfV\llbracket\bH [n]\times \bV\cC,\bA\rrbracket & \text{internal hom} \\
    &\cong \bfV\llbracket\bH [n], \llbracket\bV\cC,\bA\rrbracket \rrbracket & \text{internal hom} \\
    & \cong (N^h \llbracket\bV\cC,\bA\rrbracket)_n & \text{def.\ of } N^h
\end{align*}
\footnotetext{This is shown in \cite[Proposition 2.5]{FPP}.}
which yield a natural isomorphism $[\cC,N^h\bA]_\Delta\cong N^h\llbracket\bV\cC,\bA\rrbracket$ in $\cat^{\Dop}$. 
\end{proof}

\begin{prop} \label{prop:pathobjdblcat}
Let $\bA$ be a naive fibrant double category. Then the factorization of the diagonal at $\bA$
\[ \bA\to \llbracket\bV\bI,\bA\rrbracket\to \bA\times \bA \]
induced by $\mathbbm 1\sqcup \mathbbm 1\to \bV\bI \to \mathbbm 1$ is a path object. 
\end{prop}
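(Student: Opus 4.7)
The plan is to apply the horizontal nerve $N^h$ and reduce everything to a standard cotensor-Quillen calculation in $\cat^{\Dop}$. Using that $N^h$ is a right adjoint (hence preserves products) together with \cref{nervevsinthom} applied at $\cC = \bI$, the image under $N^h$ of the proposed factorization becomes
\[ N^h\bA \to [\bI, N^h\bA]_\Delta \to N^h\bA \times N^h\bA, \]
induced by $\mathbbm{1} \sqcup \mathbbm{1} \to \bI \to \mathbbm{1}$ in $\cat$. Since $\bA$ is naive fibrant, $N^h\bA$ is Reedy fibrant by construction, so it suffices to verify that the first map above is a weak equivalence and the second a Reedy fibration in $\cat^{\Dop}$.

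The Reedy model structure on $\cat^{\Dop}$ and the canonical model structure on $\cat$ assemble into a Quillen bifunctor via the level-wise internal hom $[-,-]_\Delta$. Concretely, for any Reedy fibrant $X$ and any (trivial) cofibration $\cC \to \cD$ in $\cat$, the induced map $[\cD, X]_\Delta \to [\cC, X]_\Delta$ is a (trivial) Reedy fibration. Applied to the cofibration $\mathbbm{1} \sqcup \mathbbm{1} \to \bI$ in $\cat$, this shows that $[\bI, N^h\bA]_\Delta \to N^h\bA \times N^h\bA$ is a Reedy fibration. Applied to the trivial cofibration $\mathbbm{1} \to \bI$ (a categorical equivalence, since $\bI$ is the free-living isomorphism), it shows that $[\bI, N^h\bA]_\Delta \to N^h\bA$ is a trivial Reedy fibration. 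Since this latter map composed with $N^h\bA \to [\bI, N^h\bA]_\Delta$ is the identity, $2$-out-of-$3$ yields that $N^h\bA \to [\bI, N^h\bA]_\Delta$ is a weak equivalence, completing the verification of condition \ref{srind1} of \cref{thm:semirightinduced} for this factorization.

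No serious obstacle is anticipated: once \cref{nervevsinthom} is on hand, the entire argument is a direct application of a well-known enriched-model-categorical property of the Reedy model structure, combined with the universal $2$-out-of-$3$ trick for deducing that the map into a path object is a weak equivalence.
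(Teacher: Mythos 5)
Your proposal is correct and follows essentially the same route as the paper's proof: apply $N^h$, identify the cotensor via \cref{nervevsinthom}, and invoke the $\cat$-enrichment of the Reedy model structure on $\cat^{\Dop}$ against the cofibration $\mathbbm{1}\sqcup\mathbbm{1}\to\bI$ and the trivial cofibration $\mathbbm{1}\to\bI$. The only difference is that you spell out the $2$-out-of-$3$ step for the first map, which the paper leaves implicit.
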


\begin{proof}
By applying the nerve $N^h$ to the factorization $\bA\to \llbracket\bV\bI,\bA\rrbracket\to \bA\times \bA$, we get by \cref{nervevsinthom} the following factorization of the diagonal at $N^h\bA$
\[ N^h\bA\to [\bV\bI,N^h\bA]_\Delta\to N^h\bA\times N^h\bA. \]
Since the model structure on $\cat$ is monoidal, we have that the Reedy model structure on $\cat^{\Dop}$ is enriched over $\cat$ by \cite[Lemma 4.2]{BarwickReedy}. Using the facts that $N^h\bA$ is fibrant in $\cat^{\Dop}$ and that $\mathbbm{1}\sqcup\mathbbm{1}\to \bI$ is a cofibration and $\mathbbm{1}\to \bI$ a trivial cofibration in $\cat$, it then follows that the first morphism in the above factorization is a weak equivalence and the second one a fibration in $\cat^{\Dop}$, as desired.
\end{proof}

\begin{proof}[Proof of \cref{thm:SRIalongnerve}]
The existence of the fibrantly-transferred model structure is given by \cref{thm:semirightinduced} using \cref{trivfibarewedblcat,prop:pathobjdblcat}.
\end{proof}

Finally, we can extract an explicit description of the weak equivalences between fibrant objects in this new model structure.

\begin{prop} \label{prop:webtwnaivefib}
A double functor $F\colon \bA\to \bB$ between fibrant objects is a weak equivalence if and only if it is vertically essentially surjective on objects, and fully faithful on horizontal and vertical morphisms and squares.
\end{prop}

\begin{proof}
    By construction, a double functor $F\colon \bA\to \bB$ between fibrant objects is a weak equivalence if and only if $N^h F$ is a weak equivalence in $\cat^{\Dop}$ equipped with the Reedy model structure; that is, if and only if, for all $n\geq 0$, the functor $(N^h F)_n$ is an equivalence of categories. As described in \cref{defn:hornerve}, we have that $(N^h\bA)_0=\bA_0$ is the category of objects and vertical morphisms of $\bA$, and so we see that $(N^h F)_0$ is an equivalence of categories if and only if $F$ is vertically essentially surjective on objects and fully faithful on vertical morphisms.

    Next, recall that $(N^h\bA)_1=\bA_1$ is the category of horizontal morphisms and squares of~$\bA$, hence the functor $(N^h F)_1$ is an equivalence of categories if and only if the following two conditions are satisfied:
    \begin{enumerate}
        \item for each horizontal morphism $g\colon B\to D$ in $\bB$, there exists a horizontal morphism $f\colon A\to C$ in $\bA$ and a vertically invertible square $\beta$ in $\bB$ as follows,
        \begin{tz}
\node[](1) {$B$}; 
\node[below of=1](2) {$FA$}; 
\node[right of=1](3) {$D$}; 
\node[right of=2](4) {$FC$};

\draw[->,pro] (1) to node[left,la]{$w$} (2); 
\draw[->] (1) to node[above,la]{$g$} (3); 
\draw[->] (2) to node[below,la]{$Ff$} (4); 
\draw[->,pro](3) to node[right,la]{$w'$} (4); 
 
\node[la] at ($(1)!0.5!(4)+(5pt,0)$) {$\beta$};
\node[la] at ($(1)!0.5!(4)-(5pt,0)$) {\rotatebox{90}{$\cong$}};
\end{tz}
    \item for each square $\beta$ in $\bB$ as below left, there exists a unique square $\alpha$ in $\bA$ as below right such that $F\alpha=\beta$.
    \begin{tz}
\node[](1) {$FA$}; 
\node[below of=1](2) {$FA'$}; 
\node[right of=1](3) {$FC$}; 
\node[below of=3](4) {$FC'$};

\draw[->,pro] (1) to node[left,la]{$v$}  (2); 
\draw[->] (2) to node[below,la]{$Ff'$} (4); 
\draw[->,pro](3) to node[right,la]{$v'$}  (4); 
\draw[->] (1) to node[above,la]{$Ff$} (3); 

\node[la] at ($(1)!0.5!(4)$) {$\beta$};

\node[right of=3,xshift=1.5cm](1) {$A$}; 
\node[below of=1](2) {$A'$}; 
\node[right of=1](3) {$C$}; 
\node[right of=2](4) {$C'$};

\draw[->,pro] (1) to node[left,la]{$u$} (2); 
\draw[->] (1) to node[above,la]{$f$} (3); 
\draw[->] (2) to node[below,la]{$f'$} (4); 
\draw[->,pro](3) to node[right,la]{$u'$} (4); 
 
\node[la] at ($(1)!0.5!(4)$) {$\alpha$};
\end{tz}
 \end{enumerate}

 Suppose that the functor $(N^hF)_n$ is an equivalence of categories, for all $n\geq 0$. Combining the fact that $F$ is fully faithful on vertical morphisms with condition (2) above, we obtain that $F$ is fully faithful on squares. Thus, to prove the desired description, it remains to show that $F$ is fully faithful on horizontal morphisms. For faithfulness, let $f,f'\colon A\to C$ be horizontal morphisms in $\bA$ such that $Ff=Ff'$, and consider the square in $\bB$ as below left.
         \begin{tz}
\node[](1) {$FA$}; 
\node[below of=1](2) {$FA$}; 
\node[right of=1](3) {$FC$}; 
\node[right of=2](4) {$FC$};

\draw[d,pro] (1) to  (2); 
\draw[->] (1) to node[above,la]{$Ff$} (3); 
\draw[->] (2) to node[below,la]{$Ff'$} (4); 
\draw[d,pro](3) to  (4); 
 
\node[la] at ($(1)!0.5!(4)$) {$e_{Ff}$};

\node[right of=3,xshift=1.5cm](1) {$A$}; 
\node[below of=1](2) {$A$}; 
\node[right of=1](3) {$C$}; 
\node[right of=2](4) {$C$};

\draw[d,pro] (1) to  (2); 
\draw[->] (1) to node[above,la]{$f$} (3); 
\draw[->] (2) to node[below,la]{$f'$} (4); 
\draw[d,pro](3) to  (4); 
 
\node[la] at ($(1)!0.5!(4)+(5pt,0)$) {$\alpha$};
\node[la] at ($(1)!0.5!(4)-(5pt,0)$) {\rotatebox{90}{$\cong$}};
\end{tz}
As $F$ is fully faithful on squares, there exists a unique square $\alpha$ in $\bA$ as above right which (again by fully faithfulness of $F$ on squares) must be vertically invertible. Then both $\alpha$ and $e_{f'}$ are vertically invertible squares in $\bA$ that complete the diagram 
         \begin{tz}
\node[](1) {$A$}; 
\node[below of=1](2) {$A$}; 
\node[right of=1](3) {$C$}; 
\node[right of=2](4) {$C$};

\draw[d,pro] (1) to  (2); 
\draw[->] (2) to node[below,la]{$f'$} (4); 
\draw[d,pro](3) to  (4); 
\end{tz} and, since $\bA$ is fibrant, we must have $\alpha=e_{f'}$; in particular, we have $f=f'$ as desired.

To show that $F$ is full on horizontal morphisms, suppose that we are given a horizontal morphism $g\colon FA\to FC$ in $\bB$. By condition (1) above, there is a vertically invertible square $\beta$ in $\bB$ as follows.
        \begin{tz}
\node[](1) {$FA$}; 
\node[below of=1](2) {$FA'$}; 
\node[right of=1](3) {$FC$}; 
\node[right of=2](4) {$FC'$};

\draw[->,pro] (1) to node[left,la]{$w$} (2); 
\draw[->] (1) to node[above,la]{$g$} (3); 
\draw[->] (2) to node[below,la]{$Ff$} (4); 
\draw[->,pro](3) to node[right,la]{$w'$} (4); 
 
\node[la] at ($(1)!0.5!(4)+(5pt,0)$) {$\beta$};
\node[la] at ($(1)!0.5!(4)-(5pt,0)$) {\rotatebox{90}{$\cong$}};
\end{tz} In particular, note that $w$ and $w'$ must be vertical isomorphisms. Now, since $F$ is fully faithful on vertical morphisms, there exist unique vertical isomorphisms $u\colon A\to A'$ and $u'\colon C\to C'$ in $\bA$ such that $Fu=w$, $Fu'=w'$. We can then consider the diagram below left, which can be completed to a unique vertically invertible square as below right using the fact that $\bA$ is fibrant. 
\begin{tz}
\node[](1) {$A$}; 
\node[below of=1](2) {$A'$}; 
\node[right of=1](3) {$C$}; 
\node[below of=3](4) {$C'$};

\draw[->,pro] (1) to node[left,la]{$u$} node[right,la]{$\cong$} (2); 
\draw[->] (2) to node[below,la]{$f$} (4); 
\draw[->,pro](3) to node[right,la]{$u'$} node[left,la]{$\cong$} (4);

\node[right of=3,xshift=1.5cm](1) {$A$}; 
\node[below of=1](2) {$A'$}; 
\node[right of=1](3) {$C$}; 
\node[right of=2](4) {$C'$};

\draw[->,pro] (1) to node[left,la]{$u$} (2); 
\draw[->] (1) to node[above,la]{$\hat{f}$} (3); 
\draw[->] (2) to node[below,la]{$f$} (4); 
\draw[->,pro](3) to node[right,la]{$u'$} (4); 
 
\node[la] at ($(1)!0.5!(4)+(5pt,0)$) {$\alpha$};
\node[la] at ($(1)!0.5!(4)-(5pt,0)$) {\rotatebox{90}{$\cong$}};
\end{tz} Applying $F$, we see that both $F\alpha$ and $\beta$ are vertically invertible squares in $\bB$ completing the diagram 
\begin{tz}
\node[](1) {$FA$}; 
\node[below of=1](2) {$FA'$}; 
\node[right of=1](3) {$FC$}; 
\node[below of=3](4) {$FC'$};

\draw[->,pro] (1) to node[left,la]{$w$} node[right,la]{$\cong$} (2); 
\draw[->] (2) to node[below,la]{$Ff$} (4); 
\draw[->,pro](3) to node[right,la]{$w'$} node[left,la]{$\cong$} (4); 
\end{tz} which, since $\bB$ is fibrant, implies that $F\alpha=\beta$. In particular, we have that $g=F\hat{f}$, showing that $F$ is full on horizontal morphisms.

Conversely, suppose that $F$ is vertically essentially surjective on objects, and fully faithful on
horizontal and vertical morphisms and squares. As we already explained, this immediately implies that $(N^h F)_0$ is an equivalence of categories. To show that $(N^h F)_1$ is an equivalence, note that the fact that $F$ is fully faithful on vertical morphisms and squares implies condition (2) above; i.e., that $(N^h F)_1$ is fully faithful on morphisms. To prove that condition (1) is also satisfied, let $g\colon B\to D$ be a horizontal morphism in $\bB$. Since $F$ is vertically essentially surjective on objects, there exist objects $A,C$ in $\bA$ and vertical isomorphisms $v,v'$ in $\bB$ yielding a diagram as below left.
\begin{tz}
\node[](1) {$FA$}; 
\node[below of=1](2) {$B$}; 
\node[right of=1](3) {$FC$}; 
\node[below of=3](4) {$D$};

\draw[->,pro] (1) to node[left,la]{$v$} node[right,la]{$\cong$} (2); 
\draw[->] (2) to node[below,la]{$g$} (4); 
\draw[->,pro](3) to node[right,la]{$v'$} node[left,la]{$\cong$} (4);

\node[right of=3,xshift=1.5cm](1) {$FA$}; 
\node[below of=1](2) {$B$}; 
\node[right of=1](3) {$FC$}; 
\node[right of=2](4) {$D$};

\draw[->,pro] (1) to node[left,la]{$v$} (2); 
\draw[->] (1) to node[above,la]{$\hat{g}$} (3); 
\draw[->] (2) to node[below,la]{$g$} (4); 
\draw[->,pro](3) to node[right,la]{$v'$} (4); 
 
\node[la] at ($(1)!0.5!(4)+(5pt,0)$) {$\beta$};
\node[la] at ($(1)!0.5!(4)-(5pt,0)$) {\rotatebox{90}{$\cong$}};
\end{tz} Since $\bB$ is fibrant, there exists a unique vertically invertible square $\beta$ as above right. Now, as $F$ is fully faithful on horizontal morphisms, we must have $\hat{g}=Ff$ for some unique horizontal morphism $f\colon A\to C$ in $\bA$; the square $\beta^{-1}$ then verifies condition (1). These ideas can be iterated to prove that $(N^h F)_n$ is an equivalence of categories for all $n\geq 2$ as well.
\end{proof}

\section{An overview of further applications}\label{section:newexamples}

In this final section, we give an overview of new model structures that have recently been constructed using the tools introduced in this paper. In particular, the ideas used to verify the hypotheses of the theorems share common features across these different examples, and a reader hoping to apply our results to their own setting may wish to consult these for inspiration.

\subsection{Stable model structure on symmetric spectra}

Consider the free-forgetful adjunction $U\colon \mathrm{Sp}^\Sigma\rightleftarrows \mathrm{Sp}_{\mathrm{st}}^{\mathbb{N}}\colon F$ between the category of symmetric spectra and the category of sequential spectra. It is well-known that the stable model structure on $\mathrm{Sp}^\Sigma$ cannot be right-transferred along this adjunction from the stable model structure on sequential spectra. Indeed, doing so would define the weak equivalences as the maps inducing isomorphisms on homotopy groups, rather than the maps inducing isomorphisms on generalized cohomology, and as a consequence the adjunction $F\dashv U$ would not be a Quillen equivalence.

In recent work \cite{symsp}, Malkiewich and Sarazola use \cref{thm:semirightinduced} to prove that the stable model structure on $\mathrm{Sp}^\Sigma$ can be fibrantly-transferred from sequential spectra along the forgetful functor. Notably, this makes it possible to construct this model structure while completely avoiding the technical notion of stable equivalences between non-fibrant objects, and only relying on well-known facts about $\pi_*$-isomorphisms in~$\mathrm{Sp}^{\mathbb{N}}$.

\subsection{Model structure for Segal spaces}

In \cite{MoserNuiten}, Moser and Nuiten use \cref{thm:main} to construct a model structure on the category of simplicial spaces in which the fibrant objects are the Segal spaces and the weak equivalences between fibrant objects are the Dwyer–Kan equivalences. They prove that this provides a model of $(\infty,1)$-categories sitting somewhere between Segal categories and complete Segal spaces, and that it retains several desirable properties found in other models such as being left proper and cartesian closed. Moreover, they show that the classical nerve functor from the canonical model structure on $\cat$ is right Quillen, a feature that is not present on the model structure for complete Segal spaces.

\subsection{Model structures for discrete and Grothendieck fibrations}

Moser and Sarazola used \cref{thm:main} to construct two model structures on the slice over a fixed category whose fibrant objects capture the notions of discrete fibrations and of Grothendieck fibrations \cite{grothMS}. Grothendieck fibrations are the output of the Grothendieck construction, an equivalence of categories that plays a central role in category theory, and that requires the 2-categorical notion of pseudofunctors. This work provides a new perspective on this classical topic, by showing that one can work with ordinary functors instead, if one considers the Grothendieck construction as a Quillen equivalence. 

\subsection{Model structures on double categories}

In recent work \cite{dblcatequivs}, the second, third, and fourth named authors show how any (combinatorial) model structure on the category $\dblcat$ whose trivial fibrations are the canonical ones---that is, the double functors which are surjective on objects, full on horizontal and vertical morphisms, and fully faithful on squares---can be constructed using \cref{thm:main}. 

In particular, they use \cref{thm:main} to efficiently recover different model structures present in the literature:   
\begin{itemize}[leftmargin=0.8cm]
     \item  The gregarious model structure of Campbell \cite{Camp}; this is the ``canonical'' model structure for double categories, and is initial among model structures with the canonical trivial fibrations, in the sense that every other such model structure is a localization of it.
     \item The model structure for weakly horizontally invariant double categories of Moser--Sarazola--Verdugo \cite{whi}; this was originally constructed as a model structure compatible with the horizontal inclusion $\bH\colon\twocat\to\dblcat$, and was used by Moser to define a nerve functor from double categories to double $(\infty, 1)$-categories \cite{lyne}. 
     \item The model structure for  equipments of Verdugo \cite{pauthesis}; this was used by Verdugo to prove a result on equivalence invariance of formal category theory.
 \end{itemize}
In addition, \cref{thm:main} is used to produce several new model structures whose homotopy theories encode a range of 2-dimensional structures:
 \begin{itemize}[leftmargin=0.8cm]
 \item A model structure for transposable double categories; this models the homotopy theory of 2-categories, as the square functor $\Sq\colon\twocat\to\dblcat$ is a Quillen equivalence. In \cite[Chapter 10, Theorem 5.2.3]{roz},  Gaitsgory\textendash Rozenblyum conjecture that the $\infty$-analogue of this functor is fully faithful and identify its essential image. Our model structure settles the conjecture in the strict case; the original conjecture has since been proved by Abell\'an \cite{Abellan}.
  \item Model structures whose homotopy theories model $2$-categories whose morphisms all have left (resp.\ both left and right) adjoints.
     \item A model structure for transposable double groupoids; this models the homotopy theory of $2$-groupoids.
      \item A model structure for either empty or contractible double categories; this models homotopy $(-1)$-types.
     \item A model structure for contractible double categories; this models homotopy $(-2)$-types.
 \end{itemize}

\bibliographystyle{alpha}
\bibliography{reference}

\begin{thebibliography}{MMSS01}

\bibitem[Abe23]{Abellan}
Fernando Abell\'an.
\newblock {Comparing lax functors of $(\infty,2)$-categories}.
\newblock \href{https://arxiv.org/abs/2311.12746}{arXiv:2311.12746}, 2023.

\bibitem[Bar07]{BarwickReedy}
Clark Barwick.
\newblock On {Reedy} model categories.
\newblock \href{https://arxiv.org/abs/0708.2832}{arXiv:0708.2832}, 2007.

\bibitem[Bar10]{barwick}
Clark Barwick.
\newblock On left and right model categories and left and right {B}ousfield localizations.
\newblock {\em Homology Homotopy Appl.}, 12(2):245--320, 2010.

\bibitem[Bek00]{beke}
Tibor Beke.
\newblock Sheafifiable homotopy model categories.
\newblock {\em Math. Proc. Cambridge Philos. Soc.}, 129(3):447--475, 2000.

\bibitem[BF78]{BF}
A.~K. Bousfield and E.~M. Friedlander.
\newblock Homotopy theory of {$\Gamma $}-spaces, spectra, and bisimplicial sets.
\newblock In {\em Geometric applications of homotopy theory ({P}roc. {C}onf., {E}vanston, {I}ll., 1977), {II}}, volume 658 of {\em Lecture Notes in Math.}, pages 80--130. Springer, Berlin, 1978.

\bibitem[BG16]{garner}
John Bourke and Richard Garner.
\newblock Algebraic weak factorisation systems {I}: {A}ccessible {AWFS}.
\newblock {\em J. Pure Appl. Algebra}, 220(1):108--147, 2016.

\bibitem[BI03]{holonomy}
Ronald Brown and \.{I}lhan \.{I}\c{c}en.
\newblock Towards a 2-dimensional notion of holonomy.
\newblock {\em Adv. Math.}, 178(1):141--175, 2003.

\bibitem[Cam]{Camp}
Alexander Campbell.
\newblock The folk model structure for double categories.
\newblock Seminar talk (2020), available from \url{https://acmbl.github.io/greg_slides.pdf}.

\bibitem[Cis06]{cisinski}
Denis-Charles Cisinski.
\newblock Les pr\'{e}faisceaux comme mod\`eles des types d'homotopie.
\newblock {\em Ast\'{e}risque}, (308):xxiv+390, 2006.

\bibitem[Cis19]{joyalMS}
Denis-Charles Cisinski.
\newblock {\em Higher categories and homotopical algebra}, volume 180 of {\em Cambridge Studies in Advanced Mathematics}.
\newblock Cambridge University Press, Cambridge, 2019.

\bibitem[Col06]{mixedMS}
Michael Cole.
\newblock Mixing model structures.
\newblock {\em Topology Appl.}, 153(7):1016--1032, 2006.

\bibitem[Cra95]{transfer1}
Sjoerd~E. Crans.
\newblock Quillen closed model structures for sheaves.
\newblock {\em J. Pure Appl. Algebra}, 101(1):35--57, 1995.

\bibitem[DK80]{dwyerkan}
W.~G. Dwyer and D.~M. Kan.
\newblock Function complexes in homotopical algebra.
\newblock {\em Topology}, 19(4):427--440, 1980.

\bibitem[DP02]{dawsonpare}
Robert Dawson and Robert Par\'{e}.
\newblock What is a free double category like?
\newblock {\em J. Pure Appl. Algebra}, 168(1):19--34, 2002.

\bibitem[DS95]{dwyerspalinski}
W.~G. Dwyer and J.~Spali\'{n}ski.
\newblock Homotopy theories and model categories.
\newblock In {\em Handbook of algebraic topology}, pages 73--126. North-Holland, Amsterdam, 1995.

\bibitem[Ehr63]{ehresmann}
Charles Ehresmann.
\newblock Cat\'{e}gories structur\'{e}es.
\newblock {\em Ann. Sci. \'{E}cole Norm. Sup. (3)}, 80:349--426, 1963.

\bibitem[Fio07]{fiore}
Thomas~M. Fiore.
\newblock Pseudo algebras and pseudo double categories.
\newblock {\em J. Homotopy Relat. Struct.}, 2(2):119--170, 2007.

\bibitem[FPP08]{FPP}
Thomas~M. Fiore, Simona Paoli, and Dorette Pronk.
\newblock Model structures on the category of small double categories.
\newblock {\em Algebr. Geom. Topol.}, 8(4):1855--1959, 2008.

\bibitem[GP99]{grandispare1}
Marco Grandis and Robert Par\'{e}.
\newblock Limits in double categories.
\newblock {\em Cah. Topol. G\'{e}om. Diff\'{e}r. Cat\'{e}g.}, 40(3):162--220, 1999.

\bibitem[GP04]{grandispare2}
Marco Grandis and Robert Par\'{e}.
\newblock Adjoint for double categories. {A}ddenda to: ``{L}imits in double categories'' [{C}ah. {T}opol. {G}\'{e}om. {D}iff\'{e}r. {C}at\'{e}g. {\bf 40} (1999), no. 3, 162--220; mr1716779].
\newblock {\em Cah. Topol. G\'{e}om. Diff\'{e}r. Cat\'{e}g.}, 45(3):193--240, 2004.

\bibitem[GP19]{grandispare3}
Marco Grandis and Robert Par\'{e}.
\newblock Persistent double limits.
\newblock {\em Cah. Topol. G\'{e}om. Diff\'{e}r. Cat\'{e}g.}, 60(3):255--297, 2019.

\bibitem[GR17]{roz}
Dennis Gaitsgory and Nick Rozenblyum.
\newblock {\em A study in derived algebraic geometry. {V}ol. {I}. {C}orrespondences and duality}, volume 221 of {\em Mathematical Surveys and Monographs}.
\newblock American Mathematical Society, Providence, RI, 2017.

\bibitem[Gra20]{grandis}
Marco Grandis.
\newblock {\em Higher dimensional categories}.
\newblock World Scientific Publishing Co. Pte. Ltd., Hackensack, NJ, 2020.
\newblock From double to multiple categories.

\bibitem[Hen20]{HenryMinimal}
Simon Henry.
\newblock Minimal model structures.
\newblock \href{https://arxiv.org/abs/2011.13408}{arXiv:2011.13408}, 2020.

\bibitem[Hir03]{Hirsch}
Philip~S. Hirschhorn.
\newblock {\em Model categories and their localizations}, volume~99 of {\em Mathematical Surveys and Monographs}.
\newblock American Mathematical Society, Providence, RI, 2003.

\bibitem[HKRS17]{HKRS}
Kathryn Hess, Magdalena K\c{e}dziorek, Emily Riehl, and Brooke Shipley.
\newblock A necessary and sufficient condition for induced model structures.
\newblock {\em J. Topol.}, 10(2):324--369, 2017.

\bibitem[HSS00]{HSS}
Mark Hovey, Brooke Shipley, and Jeff Smith.
\newblock Symmetric spectra.
\newblock {\em J. Amer. Math. Soc.}, 13(1):149--208, 2000.

\bibitem[Joy08]{JoyalVolumeII}
Andr{\'e} Joyal.
\newblock The theory of quasi-categories and its applications.
\newblock available at \url{http://mat.uab.cat/~kock/crm/hocat/advanced-course/Quadern45-2.pdf}, 2008.

\bibitem[Koc07]{kock}
Joachim Kock.
\newblock Note on commutativity in double semigroups and two-fold monoidal categories.
\newblock {\em J. Homotopy Relat. Struct.}, 2(2):217--228, 2007.

\bibitem[Lur09]{HTT}
Jacob Lurie.
\newblock {\em Higher topos theory}, volume 170 of {\em Annals of Mathematics Studies}.
\newblock Princeton University Press, Princeton, NJ, 2009.

\bibitem[MMSS01]{MMSS}
M.~A. Mandell, J.~P. May, S.~Schwede, and B.~Shipley.
\newblock Model categories of diagram spectra.
\newblock {\em Proc. London Math. Soc. (3)}, 82(2):441--512, 2001.

\bibitem[MN24]{MoserNuiten}
Lyne Moser and Joost Nuiten.
\newblock Modeling {$(\infty,1)$}-categories with {S}egal spaces.
\newblock arXiv:2412.10359, 2024.

\bibitem[Mos19]{Moser}
Lyne Moser.
\newblock Injective and projective model structures on enriched diagram categories.
\newblock {\em Homology Homotopy Appl.}, 21(2):279--300, 2019.

\bibitem[Mos20]{lyne}
Lyne Moser.
\newblock A double $(\infty,1)$-categorical nerve for double categories.
\newblock \href{https://arxiv.org/abs/2007.01848}{arXiv:2007.01848}, 2020.

\bibitem[MS24a]{symsp}
Cary Malkiewich and Maru Sarazola.
\newblock A concise proof of the stable model structure on symmetric spectra.
\newblock \href{https://arxiv.org/abs/2402.04220}{arXiv:2402.04220}, 2024.

\bibitem[MS24b]{grothMS}
Lyne Moser and Maru Sarazola.
\newblock A model structure for {G}rothendieck fibrations.
\newblock {\em J. Pure Appl. Algebra}, 228(10):Paper No. 107692, 29, 2024.

\bibitem[MSV23]{whi}
Lyne Moser, Maru Sarazola, and Paula Verdugo.
\newblock A model structure for weakly horizontally invariant double categories.
\newblock {\em Algebr. Geom. Topol.}, 23(4):1725--1786, 2023.

\bibitem[MSV25]{dblcatequivs}
Lyne Moser, Maru Sarazola, and Paula Verdugo.
\newblock Double categorical equivalences.
\newblock arXiv:2509.23181, 2025.

\bibitem[Ols11]{olschok}
Marc Olschok.
\newblock Left determined model structures for locally presentable categories.
\newblock {\em Appl. Categ. Structures}, 19(6):901--938, 2011.

\bibitem[Qui67]{Quillen}
Daniel~G. Quillen.
\newblock {\em Homotopical algebra}, volume No. 43 of {\em Lecture Notes in Mathematics}.
\newblock Springer-Verlag, Berlin-New York, 1967.

\bibitem[Rez96]{rezk}
Charles Rezk.
\newblock A model category for categories.
\newblock available at \url{https://faculty.math.illinois.edu/~rezk/cat-ho.dvi}, 1996.

\bibitem[RT03]{rosicky2003left}
J.~Rosick\'{y} and W.~Tholen.
\newblock Left-determined model categories and universal homotopy theories.
\newblock {\em Trans. Amer. Math. Soc.}, 355(9):3611--3623, 2003.

\bibitem[Shu08]{shulman1}
Michael Shulman.
\newblock Framed bicategories and monoidal fibrations.
\newblock {\em Theory Appl. Categ.}, 20:No. 18, 650--738, 2008.

\bibitem[Shu11]{shulman}
Michael Shulman.
\newblock Comparing composites of left and right derived functors.
\newblock {\em New York J. Math.}, 17:75--125, 2011.

\bibitem[Sta14]{stanc}
Alexandru~E. Stanculescu.
\newblock Constructing model categories with prescribed fibrant objects.
\newblock {\em Theory Appl. Categ.}, 29(23):635--653, 2014.

\bibitem[Ver25]{pauthesis}
Paula Verdugo.
\newblock On the equivalence invariance of formal category theory.
\newblock arXiv:2509.04255, 2025.

\end{thebibliography}

\end{document}